\theoremstyle{defi}
\newtheorem{theo}{Theorem}[section]
\newtheorem{prop}[theo]{Proposition}
\newtheorem{lemm}[theo]{Lemma}
\newtheorem{rem}[theo]{Remark}
\newcommand{\connect}{\overset{\rm hole}{\longleftrightarrow}}
\newcommand{\bondconnect}{\overset{\rm bond}{\longleftrightarrow}}
\newcommand{\phole}{{\theta}^{\operatorname{hole}}}
\newcommand{\pbond}{{\theta}^{\operatorname{bond}}}
\newcommand{\pface}{{\theta}^{\operatorname{face}}}
\newcommand{\bond}{p_c^{\operatorname{bond}}}
\newcommand{\hole}{p_c^{\operatorname{hole}}}
\newcommand{\face}{p_c^{\operatorname{face}}}
\newcommand{\R}{\mathbb{R}}
\newcommand{\Z}{\mathbb{Z}}
\newcommand{\K}{\mathcal{K}}
\newcommand{\F}{\mathcal{F}}
\newcommand{\B}{\tilde{B}}
\newcommand{\sz}{\mbox{size}}
\title{Percolation on Homology Generators in Codimension One}
\author{Yasuaki Hiraoka\thanks{Center for Advanced Study, Institute for the Advanced Study of Human Biology (WPI-ASHBi), Kyoto University Institute for Advanced Study, Kyoto University. 
Center for Advanced Intelligence Project, RIKEN. (E-mail: hiraoka.yasuaki.6z@kyoto-u.ac.jp)}~
and 
Tatsuya Mikami\thanks{Mathematical Institute, Tohoku University (E-mail: tatsuya.mikami.s4@dc.tohoku.ac.jp)}
}
\date{}
\begin{document}
\maketitle

\tableofcontents
\newpage
\begin{abstract}
This paper introduces a new percolation model motivated from polymer materials. The mathematical model is defined over a random cubical set in the $d$-dimensional space $\R^d$ and focuses on generations and percolations of $(d-1)$-dimensional holes as higher dimensional topological objects. Here, the random cubical set is constructed by the union of unit faces in dimension $d-1$  which appear randomly and independently with probability $p$, and holes are formulated by the homology generators. 
Under this model, the upper and lower estimates of the critical probability $\hole$ of the hole percolation are shown in this paper, implying the existence of the phase transition. The uniqueness of infinite hole cluster is also proven. This result shows that, when $p > \hole$, the probability $P_p(x^* \connect y^*)$ that two points in the dual lattice $(\Z^d)^*$ belong to the same hole cluster is uniformly greater than 0.
\end{abstract}

\section{Introduction}
\subsection{Background}
\label{subsec:craze}
Percolation theory has its origin in applied problems. One of the most famous mathematical formulations is the modeling of immersion in a porous stone, which is expressed by the bond percolation model as follows. Let $\mathbb{L}^d = (\Z^d, \mathbb{E}^d)$ be the $d$-dimensional cubical lattice, where $\Z$ expresses the set of integers and $\Z^d$ and $\mathbb{E}^d$ are the sets of vertices and bonds (or edges) over the $d$-dimensional integer lattice, respectively (see Section \ref{sec:preliminaries}). For a fixed $p\in[0,1]$, each bond in $\mathbb{L}^d$ is assumed to be open randomly with probability $p$, and closed otherwise, independently of all other edges. Open bonds correspond to interstices randomly generated in the stone, and the probability $p$ means the proportion of the interstices in the stone. 

In this model,  {\it the percolation probability} $\pbond(p) = P_p(|C(0)| = \infty)$ has been extensively studied. Here, $P_p$ expresses the probability measure constructed as the product measure of those from all bonds, and $C(0) \subset \mathbb{L}^d$ denotes the connected component containing the origin in the subgraph which consists of all open bonds. The percolation probability $\pbond(p)$ increases as the probability $p$ increases, and it has been of great interest in {\it the critical probability} $\bond(d) = \inf \{p : \pbond(p) > 0\}$. 

For $d \geq 2$, it is easy to show that $0 < \bond(d) < 1$. This implies that the bond percolation model possesses two phases $p > \bond(d)$ and $p < \bond(d)$ called {\it supercritical} and {\it subcritical} phases, respectively, and the phase transition occurs at the critical probability $\bond(d)$. Namely, open bond clusters can be infinitely large in the supercritical phase, while they are always in finite size in subcritical phase. One of the most remarkable properties showing the phase transition is formulated as follows.
\begin{theo}
\rm
If $p > \bond$, then there exists $c := c(p) > 0$ such that
\begin{align}
P_p(x \overset{\text{bond}} \longleftrightarrow y) \geq c  \ \text{for any } x, y  \in \Z^d.
\label{supercritical}
\end{align} 
If $p < \bond$, then there exists $\sigma := \sigma(p) > 0$ such that
\begin{align}
P_p(x \overset{\text{bond}} \longleftrightarrow y) \leq e^{-\sigma \|x - y\|_1}  \ \text{for any } x, y  \in \Z^d.
\label{subcritical}
\end{align}
\label{theorem:phase_transition}
\end{theo}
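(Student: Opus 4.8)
The plan is to prove the two estimates by entirely different mechanisms: the supercritical bound \eqref{supercritical} follows from the structure of the infinite cluster together with positive correlations, whereas the subcritical bound \eqref{subcritical} rests on the sharpness of the phase transition followed by a subadditive (renormalization) argument.

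For \eqref{supercritical}, I would start from the observation that $p > \bond$ means, by the definition of the critical value, that $\pbond(p) = P_p(|C(0)| = \infty) > 0$, and by translation invariance $P_p(x \bondconnect \infty) = \pbond(p)$ for every $x \in \Z^d$. The essential structural input is the Burton--Keane theorem: for $p > \bond$ there exists $P_p$-almost surely a \emph{unique} infinite open cluster. Hence on the event $\{x \bondconnect \infty\} \cap \{y \bondconnect \infty\}$ the points $x$ and $y$ lie in infinite clusters that must coincide, so $x \bondconnect y$. As both events are increasing, the FKG inequality yields
\begin{align}
P_p(x \bondconnect y) \;\geq\; P_p(x \bondconnect \infty,\ y \bondconnect \infty) \;\geq\; P_p(x \bondconnect \infty)\,P_p(y \bondconnect \infty) \;=\; \pbond(p)^2,
\end{align}
which is \eqref{supercritical} with $c = \pbond(p)^2 > 0$, uniformly over all $x, y$.

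For \eqref{subcritical}, the crucial and by far the hardest ingredient is the \emph{sharpness of the phase transition}: for every $p < \bond$ the susceptibility $\chi(p) := E_p|C(0)| = \sum_{y \in \Z^d} P_p(0 \bondconnect y)$ is finite. This is exactly the content of the Menshikov and Aizenman--Barsky theorems (with the recent streamlined proof of Duminil-Copin and Tassion), and it is the step I expect to be the main obstacle, since it is what forces the value $\bond$ defined through $\pbond$ to coincide with the threshold at which $\chi$ diverges.

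Granting $\chi(p) < \infty$, exponential decay follows by Hammersley's subadditive argument. Writing $B_n$ for the $\ell^1$-ball of radius $n$ about the origin, $\tau_n := P_p(0 \bondconnect \partial B_n)$, and $h(m) := \sum_{z \in \partial B_m} P_p(0 \bondconnect z)$, one has $\sum_{m \geq 0} h(m) = \chi(p) < \infty$, so $h(m) \to 0$ and there is an $m_0$ with $h(m_0) < 1$. A connection from $0$ to $\partial B_{m+n}$ must cross $\partial B_m$ at some first vertex $z$, giving a connection $0 \bondconnect z$ inside $B_m$ and a disjoint connection $z \bondconnect \partial B_{m+n}$ outside; the van den Berg--Kesten inequality together with translation invariance then yields
\begin{align}
\tau_{m+n} \;\leq\; \sum_{z \in \partial B_m} P_p(0 \bondconnect z)\,P_p(z \bondconnect \partial B_{m+n}) \;\leq\; h(m)\,\tau_n .
\end{align}
Iterating with $m = m_0$ gives $\tau_{k m_0} \leq h(m_0)^{\,k-1}\tau_{m_0}$, i.e.\ $\tau_n \leq e^{-\sigma n}$ for some $\sigma = \sigma(p) > 0$ and all large $n$. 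Finally, for $\|x - y\|_1 = n$ the inclusion $\{x \bondconnect y\} \subseteq \{x \bondconnect \partial B_n(x)\}$ gives $P_p(x \bondconnect y) \leq \tau_n \leq e^{-\sigma \|x-y\|_1}$, which is \eqref{subcritical}. The one technical point requiring care is the BK step: the two connections must be realized on disjoint edge sets, which is precisely why the intermediate sphere $\partial B_m$ is inserted.
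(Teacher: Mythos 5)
Your proof of (\ref{supercritical}) is exactly the paper's argument: Burton--Keane uniqueness of the infinite cluster plus the FKG inequality applied to the increasing events $\{|C(x)|=\infty\}$ and $\{|C(y)|=\infty\}$, giving the uniform constant $c=\pbond(p)^2$. For (\ref{subcritical}) the situation is different: the paper does not prove this half at all --- it is a classical fact deferred to Grimmett's book --- so your sketch supplies something the source omits. Your route is the standard correct one: the deep input is sharpness of the phase transition (Menshikov, Aizenman--Barsky, or Duminil-Copin--Tassion), which gives $\chi(p)=\sum_{y}P_p(0 \bondconnect y)<\infty$ for $p<\bond$; from there Hammersley's subadditive argument is set up properly, since the first vertex at which a path from $0$ to $\partial B_{m+n}$ meets $\partial B_m$ splits the path into two edge-disjoint connections, so the van den Berg--Kesten inequality and translation invariance yield $\tau_{m+n}\leq h(m)\,\tau_n$, and $\sum_m h(m)=\chi(p)<\infty$ supplies an $m_0$ with $h(m_0)<1$ to iterate. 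Two points deserve explicit mention: first, you are quoting sharpness as a black box, which is legitimate but is essentially the entire difficulty of the subcritical statement (the paper sidesteps it by citation, and you correctly flag it as the main obstacle); second, upgrading decay along multiples of $m_0$ to the clean bound $e^{-\sigma\|x-y\|_1}$ for \emph{all} pairs $x,y$ needs the routine adjustment of $\sigma$, using monotonicity of $\tau_n$ in $n$ and the fact that $\tau_1 = 1-(1-p)^{2d}<1$ handles small separations; this is standard but worth a sentence in a complete write-up.
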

Here, we denote by $x \bondconnect y$ the event that two vertices $x, y$ are connected by some open paths. Theorem \ref{theorem:phase_transition} shows that the probability $P_p(x \bondconnect y)$ behaves differently between the two phases. We refer to \cite{Grimmett} for more details about the bond percolation. 

Recently, a new type of percolation phenomenon is pointed out in the study of polymer materials \cite{craze}. In that paper, they study the generating mechanism of craze formations appearing in the uniaxial deformation of polymers (Kremer-Grest model) by molecular dynamics simulations. 
Then, they found by applying persistent homology that a large void corresponding to a craze of the polymer starts to appear by the process of coalescence of many small voids. Namely, this paper suggests that ``percolation of nanovoids" is the key mechanism to initiate craze formations, comparing to the other possibilities such as direct growing of some selected small voids.

On the other hand, higher dimensional models defined over the cubical lattice $\mathbb{L}^d = (\Z^d, \mathbb{E}^d)$ have also been studied recently in random topology \cite{tsunoda,ww}. In their model, $k$-dimensional elementary cubes (a product of $k$ intervals with length one) are assumed to be open with  probability $p$. From the construction, it naturally includes the bond percolation model mentioned above.
Then, some topological properties of the resulting random cubical set are studied and, in particular, the paper \cite{tsunoda} shows several limit theorems on higher dimensional homology of the random cubical set. These results are regarded as higher dimensional generalizations of the classical studies on connected components in random graphs \cite{ErdosRenyi}, which correspond to $0$-dimensional topological objects. 

We note that the bond percolation model explained above (and most percolation models studied in probability theory so far) focuses on the infinite clusters of connected components. 
However, in view of the recent progress of random topology (e.g., \cite{bk,kahle}), it is natural to consider a new type of percolation model which directly deals with higher dimensional topological objects. 

In this paper, we introduce a higher dimensional percolation model, called hole percolation, motivated from the craze formation of polymer materials. 
While the classical bond percolation theory mainly studies clusters of vertices (i.e., 0-dimensional objects), our model focuses on  clusters of holes as higher dimensional topological objects. More precisely,  we use homology generators in codimension one for representing the holes, and then study infinite clusters of those holes, which model the percolation of nanovoids in polymer materials. 

Historically, the paper \cite{plaquette} uses the plaquette percolation model, which is almost equivalent to the  models studied in \cite{tsunoda,ww}. 
Their interest in that paper is to study the percolation problem which also allows entanglement to the usual bond percolation, and the plaquette model is introduced as a subsidiary tool to study entanglement. 
We remark that, although our hole percolation model is constructed based on the setting in \cite{tsunoda}, our interest is infinite clusters of holes, and hence is different from 
entanglements.

\subsection{Main results}
Our mathematical model is briefly explained as follows (see Section \ref{sec:model} for details). In the $d$-dimensional cubical lattice $\mathbb{L}^d$, we assume that each unit cube in dimension $d-1$ called face is open with probability $p$ and closed otherwise, independently of all other faces. For a configuration $\omega$ of faces, we focus on the homology  in codimension 1 of its realization  $K(\omega)$, i.e., $H_{d-1}(K(\omega))$. Each generator of $H_{d-1}(K(\omega))$ corresponds to a bounded component of $\R^d \setminus K(\omega)$, which we call ``hole", and we study the percolation of holes.  

To that aim, we define the so called hole graph, that is, the vertices consist of holes and the edges are assigned for adjacent holes. Then, in the same way as the bond percolation model, we define the percolation probability $\phole(p) := P_p(|G_{0^*}(\omega)| = \infty)$ and the critical probability $\hole := \inf \{ p \in [0,1] : \phole(p) > 0 \}$, where $G_{0^*}(\omega)$ is a fixed connected component of the hole graph. We call this model {\it the hole percolation model} in this paper. 

Under this setting, we first give estimates of the critical probability $\hole$ and, in particular, we show that $0 < \hole < 1$ (Theorem \ref{theo:pc_estimate}). This implies that there exists two phases even in the hole percolation model. To find an upper bound of $\hole$, we use the dual lattice $(\mathbb{L}^d)^*$, which is obtained by shifting $\mathbb{L}^d$ to the vector $(1/2, \ldots , 1/2)$. There is a natural bijective correspondence between faces in $\R^d$ and dual bonds transversely intersecting each other. Under this bijection, we assume that each dual bond is open if and only if the corresponding face is closed, leading to the bond percolation model in $(\mathbb{L}^d)^*$ with probability $1-p$. Then, it can be shown that the holes in $\R^d$ correspond to the finite clusters in $(\mathbb{L}^d)^*$. Under this relation, the generation of holes is studied via finite clusters in the dual bond percolation. 

Moreover, we show the analogues of the estimate (\ref{supercritical}) of the probability $P_p(x \overset{\text{bond}} \longleftrightarrow y)$ in the supercritical phase. For the bond percolation model, the uniqueness of the infinite cluster plays an important role to prove the estimate (\ref{supercritical}). Namely, if two vertices belong to infinite clusters, then those two vertices are connected by an open path in the unique infinite cluster, and thus, the probability $P_p(x \overset{\text{bond}} \longleftrightarrow y)$ is bounded below, independently on the distance of $x, y$. 
Following this strategy, 
we show the uniqueness of the infinite cluster in the hole percolation model (Theorem \ref{theo:hole_unique}), and prove the analogues statement in Theorem \ref{theo:connection_in_supercritical}. 

We also discuss differences between the bond and hole percolation models. A significant difference, which makes difficult the analysis of shapes and sizes of hole graphs, is that the generation of holes cannot be decided in the bounded area. We observe how this difficulty influences properties of the hole percolation model.

The paper is organized as follows. In Section 2, we introduce the setting of the hole graph and show the main theorems. In Section 3 and Section 4, we prove two main theorems: the estimate of the critical probability and the uniqueness of infinite cluster, respectively. In Section 5, we state the other properties of the hole percolation model and carefully discuss the difference between the hole percolation model and the classical one.

\section{Model and main theorems}
\label{sec:model}
\subsection{Preliminaries}\label{sec:preliminaries}
We denote by $\| \cdot \|_p$ the $L_p$-norm, and by $|G|$ the number of vertices of a graph $G$. Assume $d \geq 1$. Let $\Z^d$ be the set of all vectors $x = (x_1, x_2, \ldots , x_d)$ with integer coordinates, and we define
\begin{align*}
\mathbb{E}^d = \{ \big<x, y \big> : x,y \in \Z^d, \, \| x -y \|_1 = 1\}
\end{align*}
as the set of edges. We call the pair $\mathbb{L}^d = (\Z^d, \mathbb{E}^d)$ the {\it d-dimensional cubical lattice}. We define the sample space $\Omega := \{0, 1\}^{\mathbb{E}^d}$ and the $\sigma$-field $\F$ of $\Omega$ generated by finite-dimensional cylinder sets\footnote{A finite-dimensional cylinder set is a set $\{\omega \in \Omega: \omega_{e_i} = \epsilon_i, i = 1,2, \ldots ,n\}$ for some $n \in \mathbb{N}$, $e_1, \ldots , e_n \in {\mathbb{E}^d}$ and $\epsilon_i \in \{0, 1\}$. }. For $p \in [0,1]$, define the probability measure $P_p$ on $(\Omega, \F)$ as the product measure $\Pi_{e \in {\mathbb{E}^d}} \mu_e$, where $\mu_e$ is the measure on $\{0, 1\}$ such that $\mu_e(1) = p$. We denote by $E_p(\cdot)$ the expectation with respect to $P_p$. For a sample $\omega = (\omega_e \, : \, e \in \mathbb{E}^d) \in \Omega$, called a {\it configuration}, we say a bond $e \in \mathbb{E}^d$  is {\it open} (resp. {\it closed}) if $\omega_e = 1$ (resp. 0). \par
For a configuration $\omega$, let $K(\omega) \subset \mathbb{L}^d$ be a subgraph which consists of $\Z^d$ and all open bonds in $\omega$. We denote by $C(x)$ the cluster at $x$, i.e., the connected component of $K(\omega)$ containing the vertex $x$, and we write $C(0)$ the cluster at the origin. For the number $|C(0)|$ of vertices, which is a random variable,  we define the {\it percolation probability} as
\begin{align*}
\pbond(p) = P_p(|C(0)| = \infty).
\end{align*}
We also define the {\it critical probability} as
\begin{align*}
\bond(d) = \inf \{p : \pbond(p) > 0\},
\end{align*}
which is the critical point of $p$ for which $\pbond(p) > 0$.
It is one of the great interests of percolation theory to find or estimate $\bond(d)$.
\begin{rem}
\label{rem:Kesten}
\rm
We can easily see $\bond(1) = 1$. For $d=2$, Harris \cite{Harris} proved that $\pbond(1/2) = 0$, and Kesten \cite{Kesten} proved that $\bond(2) = 1/2$ . 
\end{rem}

\begin{rem}
\rm
\label{bond_dimension}
For any dimension $d \geq 1$, we can easily check
\begin{align*}
\bond(d+1) \leq \bond(d). 
\end{align*}
Indeed, by embedding $\mathbb{L}^d$ into $\mathbb{L}^{d+1}$ in a natural way as the projection of $\mathbb{L}^{d+1}$ onto the subspace generated by the first $d$ coordinates, an infinite cluster at the origin in $\mathbb{L}^d$ can be regarded as one in $\mathbb{L}^{d+1}$. Hence, together with Remark \ref{rem:Kesten}, we have an upper bound $\bond(d) \leq 1/2$ for $d \geq 2$. For a lower bound, we use \cite[Theorem 1.33]{Grimmett} and obtain
\begin{align*}
\frac{1}{2d-1} \leq \bond(d)
\end{align*}
for $d \geq 1$. It follows from these inequalities that $0 < \bond(d) < 1$ for $d \geq 2$, which implies that there are two phase (supercritical $p > \bond$ and subcritical $p < \bond$) in the bond percolation model.
\end{rem}
\par
We denote by $x \overset{\text{bond}} \longleftrightarrow y$ the statement that two vertices $x, y \in \Z^d$ belong to the same cluster, and by $N^{\text{bond}}_{\infty}$ the number of infinite clusters of $K(\omega)$. For $x \in \Z^d$, let $\tau_x: \Omega \longrightarrow \Omega$ be the transformation $\tau_x \omega_e = \omega_{e+x}$ ($e \in \mathbb{E}^d$). Then $\tau_x$ is measure preserving on $(\Omega, \F, P_p)$ and $(\Omega, \F, P_p, \tau_x)$ is ergodic.
Since the event $\{N^{\text{bond}}_{\infty} = k \}$ is translation-invariant for $k \in \mathbb{N} \cup \{ \infty \}$, i.e., $\tau_x\{N^{\text{bond}}_{\infty} = k \} = \{N^{\text{bond}}_{\infty} = k \}$, $P_p(N^{\text{bond}}_{\infty} = k)$ is equal to either $0$ or $1$. Naturally, the value of $k$ with $P_p(N^{\text{bond}}_{\infty} = k) = 1$ depends on the choice of $p$. Clearly $k = 0$ in the subcritical phase.  Burton and Keane \cite{uniqueness} showed that $k$ is equal to 1 when $p$ satisfies $\pbond(p) > 0$. 
\begin{theo}[Burton and Keane \cite{uniqueness}]
\label{th:uniqueness}
\rm
If $\pbond(p) > 0$, then $N^{\text{bond}}_{\infty} = 1$ almost surely
\end{theo}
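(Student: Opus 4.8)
The plan is to reproduce the classical Burton--Keane argument, which rests on two ingredients: the \emph{finite-energy} (insertion/deletion tolerance) property of the product measure $P_p$, and a deterministic geometric bound on certain branch points of the infinite cluster. Since $(\Omega, \F, P_p, \tau_x)$ is ergodic and each event $\{N^{\text{bond}}_\infty = k\}$ is translation invariant, $N^{\text{bond}}_\infty$ equals almost surely a single constant $k \in \{0,1,2,\ldots\} \cup \{\infty\}$. The hypothesis $\pbond(p) > 0$ rules out $k = 0$, so the task reduces to excluding $2 \le k < \infty$ and $k = \infty$.

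To exclude finite $k \ge 2$, write $B(n) = [-n,n]^d \cap \Z^d$. If $N^{\text{bond}}_\infty = k$ with $2 \le k < \infty$ almost surely, then for $n$ large the event $A_n$ that all $k$ infinite clusters intersect $B(n)$ has positive probability. On $A_n$, forcing every edge with both endpoints in $B(n)$ to be open merges all $k$ clusters into a single infinite cluster, strictly decreasing the number of infinite clusters. Finite energy guarantees that the altered configuration still occurs with positive probability, so $\{N^{\text{bond}}_\infty < k\}$ has positive probability, contradicting the almost-sure constancy of $N^{\text{bond}}_\infty$.

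To exclude $k = \infty$, I call a vertex $x$ a \emph{trifurcation} if $x$ lies in an infinite cluster, exactly three open edges are incident to $x$, and deleting $x$ together with its incident edges splits that cluster into exactly three infinite clusters. First I would show that $k = \infty$ forces a positive trifurcation density: for $n$ large, with positive probability three distinct infinite clusters reach $\partial B(n)$ from outside at three prescribed vertices, and then by finite energy the interior of $B(n)$ can be reconfigured so that the origin becomes a trifurcation joining exactly these three branches. Hence $\rho := P_p(0 \text{ is a trifurcation}) > 0$, and by translation invariance the expected number of trifurcations in $B(n)$ equals $\rho \, |B(n)|$, which grows like $n^d$.

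The main obstacle is the matching deterministic upper bound. The key lemma asserts that, in any fixed configuration, the trifurcations inside $B(n)$ number at most $|\partial B(n)|$, which grows only like $n^{d-1}$. The idea is to organize the trifurcations into a forest: the three infinite branches emanating from each trifurcation are traced until they either reach $\partial B(n)$ or meet another trifurcation, so trifurcations become internal vertices of degree at least three while the branch-ends on $\partial B(n)$ become leaves. Since a finite forest has strictly fewer vertices of degree $\ge 3$ than leaves, and distinct leaves correspond to distinct boundary vertices, the bound follows. Making the branch-tracing precise, so that distinct trifurcations yield disjoint contributions and genuinely inject into distinct boundary leaves, is the delicate part. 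Combining the two estimates gives $\rho \, n^d \lesssim n^{d-1}$ for all $n$, which is impossible; thus $k \ne \infty$. Together with the previous step this leaves $k = 1$, i.e., $N^{\text{bond}}_\infty = 1$ almost surely.
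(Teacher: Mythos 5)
Your proposal is correct in outline: it is the classical Burton--Keane argument, and the two ingredients you isolate (finite energy plus a deterministic surface-order bound on trifurcations) are exactly what is needed. Note, however, that the paper does not prove Theorem \ref{th:uniqueness} at all --- it quotes it from \cite{uniqueness} --- so the relevant comparison is with Section 4, where the same strategy is carried out for the hole model (Theorem \ref{unique}, via Lemmas \ref{finite_case}, \ref{lemm:compatible} and \ref{lem:trifurcation=0}). The skeleton matches yours: ergodicity reduces everything to a constant $k$; a local-modification argument excludes $2 \leq k < \infty$ (the paper implements this by conditioning on cylinder events over the box and comparing $N_n(1)$, $N_n(0)$ and $M_n$, rather than by invoking insertion tolerance directly, but the content is the same); and $k = \infty$ is excluded by showing that it would force a positive trifurcation density, contradicting a boundary-order bound. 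The one genuine divergence is how that deterministic bound is obtained. You propose the original Burton--Keane device of tracing the three branches of each trifurcation out to $\partial B(n)$ and counting leaves of the resulting forest; the paper instead follows \cite{Grimmett} and uses the compatibility lemma for 3-partitions (Lemma \ref{lemm:compatible}): each trifurcation in the box induces a 3-partition of its cluster's trace on the boundary, any two such partitions are compatible, and a pairwise-compatible family of 3-partitions of a set $Y$ has size at most $|Y|-2$. The partition lemma buys precisely what you flag as the delicate point --- it replaces the injectivity bookkeeping of branch-tracing (disjointness of traced branches, distinctness of the resulting leaves) by a combinatorial statement proved once and for all, at the modest cost of checking that distinct trifurcations induce compatible partitions. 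Either route closes the proof, so I see no gap; if you want a self-contained write-up, the partition route is the easier one to make rigorous.
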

\par
We remark that this theorem includes the statement that $\pbond(\bond) > 0$ implies $N^{\text{bond}}_{\infty} = 1$  almost surely, though the positivity of $\pbond(\bond)$ is not shown.
Next, we review the FKG inequality (\cite[Theorem 2.4]{Grimmett}), which plays an important role in percolation theory. For the purpose of applying to our model introduced in the next section, we formulate this theorem in a slightly more general setting than \cite[Theorem 2.4]{Grimmett}, yet its proof is similar. \par
For  the general setting, we replace $\mathbb{E}^d$ with an at most countable set $S$ and we consider the product space $(\Omega, \F, P_p)$ similarly defined over $S$. Then, there is a natural partial order on $\Omega$, given by $\omega \leq \omega^{\prime}$ if and only if $\omega_s \leq \omega^{\prime}_s$ for all $s \in S$. A random variable $X$ on $(\Omega, \F)$ is called {\it increasing} if $X(\omega) \leq X(\omega^{\prime})$ whenever $\omega \leq \omega^{\prime}$, and an event $A$ is called {\it increasing} if its indicator function $I_A$ is increasing.
\begin{rem}
\rm
The event $A \in \F$ is increasing if and only if both $\omega \leq \omega^{\prime}$ and $\omega \in A$ imply $\omega^{\prime} \in A$.
\end{rem}
The FKG inequality is expressed as follows.
\begin{theo}[FKG inequality]
\rm
\label{FKG}
If $X$ and $Y$ are increasing random variables on $(\Omega, \F, P_p)$ such that $E_p(X^2) < \infty$ and $E_p(Y^2) < \infty$, then
\begin{align*}
E_p(XY) \geq E_p(X)E_p(Y).
\end{align*}
\end{theo}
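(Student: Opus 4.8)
The plan is to run the classical two-step induction argument, which uses only the product structure of $P_p$ over the countable index set and is therefore insensitive to whether $S$ equals $\mathbb{E}^d$ or is an arbitrary countable set. Enumerate $S = \{s_1, s_2, \ldots\}$ and let $\F_n$ be the $\sigma$-field generated by the coordinates $\omega_{s_1}, \ldots, \omega_{s_n}$. First I would establish the inequality under the extra assumption that $X$ and $Y$ are $\F_n$-measurable for some finite $n$ (so that they depend on only $n$ coordinates and are automatically bounded), proceeding by induction on $n$; then I would remove this assumption by a limiting argument.

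For the base case $n = 1$, with $X$ and $Y$ increasing functions of a single coordinate distributed according to the marginal $\mu$ with $\mu(1) = p$, a direct computation gives
\begin{align*}
E_p(XY) - E_p(X)\,E_p(Y) = \tfrac{1}{2} \sum_{a, b \in \{0,1\}} \mu(a)\,\mu(b)\,\big[ X(a) - X(b) \big]\big[ Y(a) - Y(b) \big].
\end{align*}
Every summand is nonnegative: since $X$ and $Y$ are both increasing, the two bracketed factors always carry the same sign. Hence the base case holds.

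For the inductive step I would condition on the last coordinate. Assuming the claim for variables depending on $n-1$ coordinates and fixing $\omega_{s_n}$, both $X$ and $Y$ restrict to increasing functions of $\omega_{s_1}, \ldots, \omega_{s_{n-1}}$ under the conditional (still product) measure, so the inductive hypothesis yields the pointwise bound
\begin{align*}
E_p\big(XY \mid \omega_{s_n}\big) \geq E_p\big(X \mid \omega_{s_n}\big)\,E_p\big(Y \mid \omega_{s_n}\big).
\end{align*}
The two factors on the right are themselves increasing functions of the single coordinate $\omega_{s_n}$, because integrating an increasing variable over the remaining coordinates against a fixed product measure preserves the partial order. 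Taking expectations and applying the base case to these two functions gives
\begin{align*}
E_p(XY) \geq E_p\big( E_p(X \mid \omega_{s_n})\,E_p(Y \mid \omega_{s_n}) \big) \geq E_p(X)\,E_p(Y),
\end{align*}
which closes the induction.

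I expect the passage from finitely many coordinates to all of $S$ to be the main technical point, and I would handle it by martingale convergence. Set $X_n := E_p(X \mid \F_n)$ and $Y_n := E_p(Y \mid \F_n)$. Each $X_n$ is $\F_n$-measurable and increasing---again because conditioning on $\F_n$ amounts to averaging the increasing variable $X$ over the coordinates outside $\{s_1, \ldots, s_n\}$ against a fixed product measure, which respects the order---and likewise for $Y_n$. Since $E_p(X^2) < \infty$ and $E_p(Y^2) < \infty$, the sequences $(X_n)$ and $(Y_n)$ are $L^2$-bounded martingales for the filtration $(\F_n)$ and, as $\F_n \uparrow \F$, converge to $X$ and $Y$ respectively in $L^2$. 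The finite-dimensional case already proved gives $E_p(X_n Y_n) \geq E_p(X_n)\,E_p(Y_n)$ for every $n$; using the $L^2$ convergence together with the Cauchy--Schwarz inequality to pass to the limit (the conditional expectations are $L^2$-contractions, so $\|Y_n\|_2$ stays bounded), one obtains $E_p(X_nY_n) \to E_p(XY)$, $E_p(X_n) \to E_p(X)$ and $E_p(Y_n) \to E_p(Y)$, and hence $E_p(XY) \geq E_p(X)\,E_p(Y)$. The one delicate ingredient throughout is the verification that conditional expectations of increasing variables remain increasing, as this is precisely what keeps both the induction and the limit inside the class of increasing variables.
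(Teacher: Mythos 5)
Your proof is correct and is essentially the argument the paper itself appeals to: the paper gives no independent proof, instead citing Grimmett's Theorem 2.4 and noting that the proof carries over to a countable index set, which is exactly the two-step scheme you execute (induction on finitely many coordinates via the single-coordinate identity and conditioning, then the $L^2$-bounded martingale limit). Your verification that conditional expectations of increasing variables stay increasing under the product measure is precisely the point that makes the extension from $\mathbb{E}^d$ to a general countable $S$ painless, as the paper asserts.
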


\begin{rem}
\rm
If $A, B \in \F$ are  increasing events, then we may apply the FKG inequality to their indicator functions $I_A$ and $I_B$ to find that   
\begin{align*}
P_p(A \cap B) \geq P_p(A)P_p(B).
\end{align*}
\end{rem}
Theorem \ref{th:uniqueness} and the FKG inequality imply the estimate (\ref{supercritical}). 

\begin{proof}[Proof of {\rm (\ref{supercritical})}]
If $p > \bond$, then there exists the unique infinite cluster almost surely and we obtain
\begin{align*}
P_p(x \overset{\text{bond}} \longleftrightarrow y) &\geq P_p(|C(x)| = \infty,\,  |C(y)| = \infty).
\end{align*}
We may apply the FKG inequality to the increasing events $\{ |C(x)| = \infty \}, \{ |C(y)| = \infty \}$ to find that the right hand side is bounded below by
\begin{align*}
P_p(|C(x)| = \infty)P_p(|C(y)| = \infty) = \pbond(p)^2 > 0, 
\end{align*}
which does not depend on $x, y  \in \Z^d$.
\end{proof}

Throughout this paper, we use the following notations. The {\it $d$-dimensional dual lattice} $(\mathbb{L}^d)^*$ is the lattice obtained by translating the d-dimensional cubical lattice by the vector $(1/2, \ldots, 1/2)$, that is, the pair $(\mathbb{L}^d)^* = ((\mathbb{Z}^d)^*, (\mathbb{E}^d)^*)$ of $(\mathbb{Z}^d)^*  := \{x^* =  x + (1/2, \ldots ,1/2) :  x \in \Z^d \}$ and $(\mathbb{E}^d)^* := \{ \big< x^* , y^*\big> :  \|x^*-y^*\|_1 = 1,  \, x^*,y^* \in (\mathbb{Z}^d)^*\}$. \par
For $n \in \Z_{\geq 0}$, let $B(n)$ be the box $\{x \in \Z^d : \|x\|_{\infty} \leq n\}$ and $\B(n)$ be $\{x^* \in (\Z^d)^* : \|x^*\|_{\infty} < n\}$. For a subset $V \subset \Z^d$, the boundary of $V$, denoted by $\partial V$, is the set of vertices in $V$ which is adjacent to some vertices in $\Z^d \setminus V$. An edge $e =  \big< x , y\big> \in \mathbb{E}^d$ is called a boundary edge of $H \subset \mathbb{L}^d$ if either $x$ or $y$ is the vertex of a subgraph $H$. For a subgraph $S \subset \mathbb{L}^d$, we denote by $\Delta S$ the set of all boundary edges of $S$. \par

\subsection{Hole graph}
In this and next subsection, we introduce our model which is a higher dimensional generalization of the usual percolation models. First, in this subsection, we define a {\it hole graph}, which corresponds to a chain of nanovoids in the craze formation. \par
Here we briefly review the concept of cubical set, which is used for defining our percolation model. We refer to \cite{cubical} for more details．An {\it elementary interval } is a closed interval $I \subset \mathbb{R}$ of the form $I = [l, l+1]$ or $I = [l, l]$ for some $l \in \mathbb{Z}$. An elementary interval $I$ is said to be {\it nondegenerate} (resp. {\it degenerate)} if $I = [l, l+1]$\ (resp. $I = [l,l]$). An {\it elementary cube} in $\R^d$ is a product $Q = I_1 \times I_2 \times \cdots \times I_d$ of elementary intervals, and the dimension of $Q$ is defined as
\begin{align*}
\operatorname{dim} Q := \# \{1 \leq i \leq d : I_i \mbox{ is nondegenerate}  \}.
\end{align*}
Denote by $\mathcal K^d_k$ the set of all elementary cubes in $\R^d$ with dimension $k$. $X \subset \R^d$ is called a {\it cubical set} if $X$ can be written as a union of elementary cubes. Note that  an infinite union of elementary cubes is also included in our definition of cubical sets although it is not in \cite{cubical}. The dimension  $\operatorname{dim }X$ of $X$ is defined as
\begin{align*}
\operatorname{dim }X := \max \{ \operatorname{dim} Q : Q \subset X \}.
\end{align*}
\par
We now introduce hole graphs. In this paper, a {\it face} in $\R^d$ means an elementary cube with dimension $d-1$. A hole graph is constructed from a cubical set consisting of faces. Given a cubical set $X$ with dimension $d-1$, a finite graph $G^n(X)$ is first constructed by restricting to the $n$-window $\Lambda^n:= [-n,n]^d \subset \R^d$ in the following way. Let 
\begin{align*}
\R^d \setminus (X \cap \Lambda^n) = D_0 \sqcup D_1 \sqcup \cdots \sqcup D_{\beta^n} 
\end{align*}
be the unique decomposition of the complement $\R^d \setminus (X \cap \Lambda^n)$, where $D_0$ is an unbounded connected domain and $D_i$ is a bounded connected domain for each $i =1,2, \ldots ,\beta^n$.
\begin{rem}
\label{Betti}
\rm
$\beta^n$ is the $(d-1)$-th Betti number of $X \cap \Lambda^n$. There is a natural bijective correspondence between the generators of the homology group $H_{d-1}(X \cap \Lambda^n) \simeq \mathbb{K}^{\beta^n}$ of $X \cap \Lambda^n$ in dimension $d-1$ and the bounded connected components $D_i \, (i = 1,2, \ldots ,\beta^n)$. 
\end{rem}
We call each connected domain $D_i \, (i = 1,2, \ldots ,\beta^n)$ a {\it hole}. Then the graph $G^n(X)$ is defined as follows. Its vertex set is  the set of holes, and two vertices are adjacent if and only if they share common boundary faces. That is, for two holes $D, D^{\prime}$, we define $D \backsim D^{\prime}$ if there exists a face $Q \in \K^d_{d-1}$ such that $Q$ is in the boundary of $D, D^{\prime}$. The graph $G^n(X)$ defined above clearly increases with the radius $n$. We define the hole graph of the cubical set $X$ as the limit $G(X) := \bigcup_{n \in \mathbb{N}}G^n(X)$. 
\par
\begin{figure}[H]
  \centering
  \includegraphics[width=6cm]{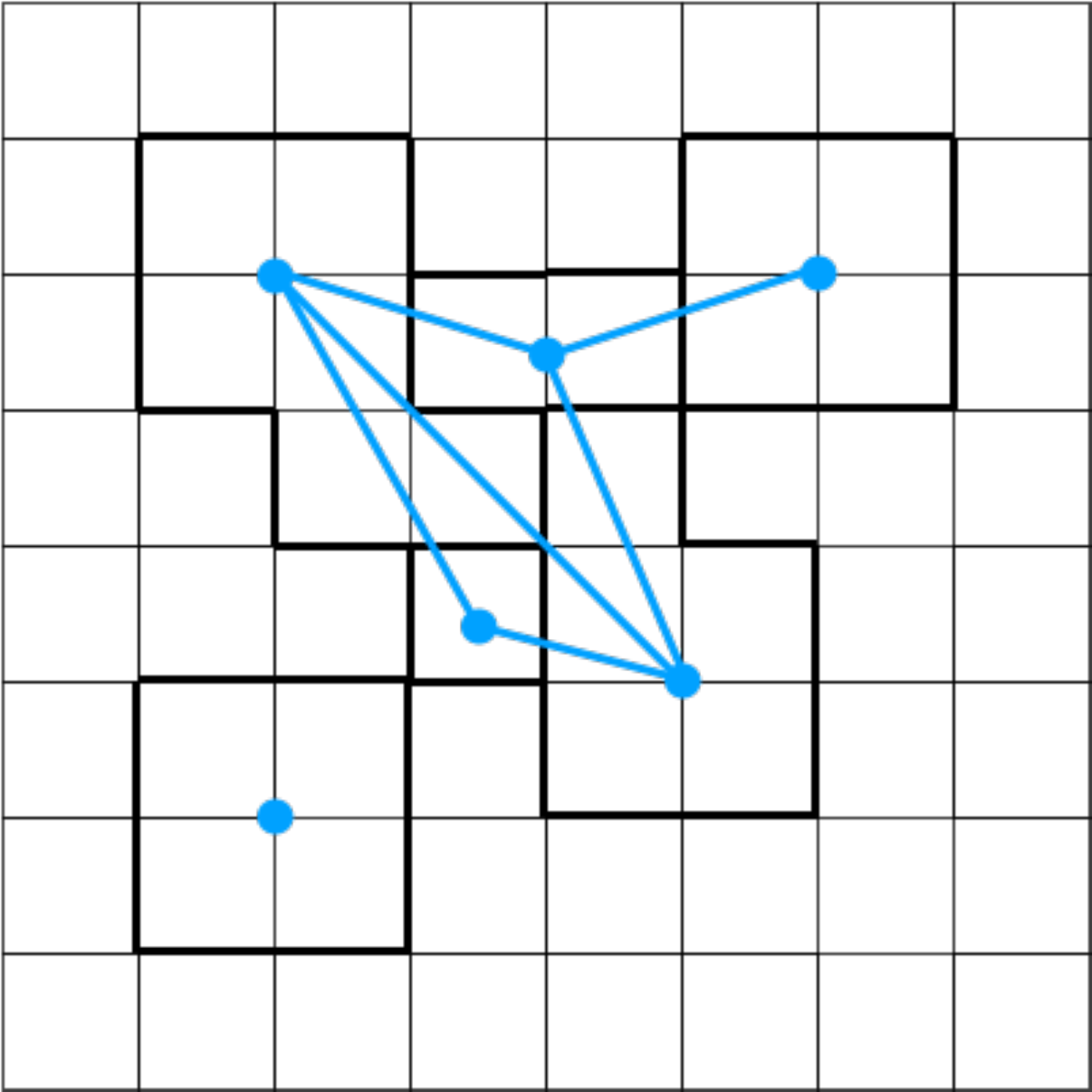}
  \caption{$d=2$. The cubical set $X$ (black) and the induced hole graph (blue)}
\end{figure}
Note that we often think of a hole graph as an embedded figure into $\R^d$, though the hole graph itself is an abstract graph induced by a cubical set. To detect the location of holes, we make use of the dual vertices. In this paper, we sometimes regard a hole $D$ as a subset of $(\Z^d)^*$, that is, 
\begin{align}
\label{regarding_subgraph}
D = \{ x^* \in (\Z^d)^* : x^* \in D \} \subset (\Z^d)^*.
\end{align}
We denote by $G_{x^*}(X)$ the connected component of the graph $G(X)$ containing the hole $D$ with $x^* \in D$. If there is no such $D$, we set $G_{x^*}(X) = \emptyset$. For $x^*, y^* \in (\Z^d)^*$, we write $x^* \connect y^*$ if $G_{x^*} = G_{y^*}$, that is, $x^*$ and $y^*$ are connected by a hole path. 

\subsection{Face percolation}
\label{subsec:Face_percolation}
In this subsection, we introduce {\it the face percolation model} which will be used for representing random generations of holes. Let $d \geq 2$.  As a sample space, we take $\Omega := \{0, 1\}^{\K^d_{d-1}}$ and $\F$ to be the $\sigma \mathchar`-$field of subsets of $\Omega$ generated by finite dimensional cylinder sets. The probability measure $P_p$ is the product measure $\Pi_{Q \in {\K^d_{d-1}}} \mu_Q$, where $\mu_Q$ is the measure on $\{0, 1\}$, given by $\mu_Q(\{1\}) = p$.  We say that $Q$ and $Q^{\prime}$ are adjacent if $Q \cap Q^{\prime} \in \K^d_{d-2}$. We denote the ``origin" of $\mathcal K_{d-1}^d$ by $Q_0 :=[0,0] \times [0,1] \times \cdots \times [0,1]$. Similar to the ordinary bond percolation model, we define the percolation probability and the critical probability as
\begin{align*}
&\pface(p) := P_p(| C(Q_0) | = \infty), \\
&\face(d):= \inf \{p : \pface(p) > 0  \},
\end{align*}
respectively, where $C(Q)$ denotes the connected component of faces including $Q$ and $| \cdot |$ denotes the number of faces.
\begin{rem}
\rm
\label{rem:2d}
For $d=2$, a face simply means a bond, implying $\face(2) = \bond(2) = 1/2$ from Remark \ref{rem:Kesten}.
\end{rem}
In this paper, we estimate the critical probability of the face percolation model in $\R^d$ for $d \geq 2$ as follows.
\begin{prop}
\rm
\label{face_lower}
For $d \geq 2$, it holds that
\begin{align*}
1/(6d - 7) \leq \face(d).
\end{align*}
\end{prop}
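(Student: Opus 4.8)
The plan is to adapt the classical first-moment (path-counting) bound that yields $1/(2d-1)\le \bond(d)$ for ordinary bond percolation, as recalled in Remark~\ref{bond_dimension} via \cite[Theorem 1.33]{Grimmett}. Exactly as in that argument, the only model-dependent input is the coordination number of the relevant adjacency graph: if every face has at most $\Delta$ neighbours under the face adjacency relation, then the number of self-avoiding paths of length $n$ starting from $Q_0$ is at most $\Delta(\Delta-1)^{n-1}$, and a first-moment estimate forces $\pface(p)=0$ whenever $(\Delta-1)p<1$. So the whole proof reduces to showing that each face has exactly $\Delta = 6(d-1) = 6d-6$ neighbours, after which $p<1/(6d-7)$ gives $(\Delta-1)p<1$ and hence $\pface(p)=0$, i.e. $1/(6d-7)\le\face(d)$.

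To compute the coordination number I would argue combinatorially on elementary cubes. Fix a face $Q\in\K^d_{d-1}$; it has exactly one degenerate coordinate, and any neighbour $Q'$ satisfies $Q\cap Q'\in\K^d_{d-2}$, so $Q$ and $Q'$ meet along a $(d-2)$-cell of $Q$. The $(d-2)$-cells contained in $Q$ are obtained by degenerating one of the $d-1$ nondegenerate coordinates to one of its two endpoints, giving $2(d-1)$ such cells. For a fixed $(d-2)$-cell $\sigma\subset Q$ (which has two degenerate coordinates), the faces containing $\sigma$ are obtained by re-expanding exactly one of those two degenerate coordinates to one of two unit intervals, hence there are $4$ faces containing $\sigma$, of which $Q$ itself is one and $3$ are genuine neighbours. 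Finally, since the intersection of two distinct elementary cubes is a single elementary cube, any neighbour $Q'$ meets $Q$ in exactly one $(d-2)$-cell, so the $3$ neighbours attached to different $(d-2)$-cells are all distinct. Counting gives
\[
\Delta = 3\cdot 2(d-1) = 6(d-1) = 6d-6,
\]
and in fact the face adjacency graph is $(6d-6)$-regular. I would double-check this count against $d=2$ (a bond has degree $6$, matching the six bonds sharing one of its two endpoints) and $d=3$ (a square has degree $12$).

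With $\Delta=6d-6$ in hand, the concluding step is routine. If $|C(Q_0)|=\infty$ then, the adjacency graph being locally finite, for every $n$ the cluster contains a face at graph-distance at least $n$ from $Q_0$, and a shortest path to it yields an open self-avoiding path of length $n$ from $Q_0$ (all of whose faces are open). Writing $M_n$ for the number of open self-avoiding paths of length $n$ from $Q_0$, we have
\[
\pface(p)=P_p(|C(Q_0)|=\infty)\le P_p(M_n\ge 1)\le E_p(M_n)\le (6d-6)(6d-7)^{n-1}p^{\,n+1}.
\]
When $p<1/(6d-7)$ we have $(6d-7)p<1$, so the right-hand side tends to $0$ as $n\to\infty$, giving $\pface(p)=0$ for all such $p$ and therefore $\face(d)=\inf\{p:\pface(p)>0\}\ge 1/(6d-7)$.

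I expect the combinatorial coordination-number computation to be the main obstacle: the path-counting bound itself is standard, but getting the constant exactly right—in particular verifying that each $(d-2)$-cell of $Q$ is shared with precisely $3$ other faces and that no neighbour is counted twice (which relies on distinct faces meeting in a single cell)—is where the argument must be carried out carefully. Any miscount there directly changes the claimed constant $6d-7$.
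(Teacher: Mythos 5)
Your proposal is correct and follows essentially the same route as the paper: establish that each face has exactly $6(d-1)$ neighbours under the adjacency relation, then run the standard first-moment path-counting bound to conclude $\pface(p)=0$ whenever $p<1/(6d-7)$, hence $1/(6d-7)\leq\face(d)$. The only cosmetic difference is in how the coordination number is computed---you enumerate the $2(d-1)$ cells of $\K^d_{d-2}$ contained in $Q$ and count the $3$ other faces through each, while the paper does a case analysis on which coordinate of the neighbouring face is degenerate---but both give the same count, and the concluding estimate is identical.
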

\begin{proof}
We first show that the number of faces adjacent to one face is equal to $6(d-1)$. Fix an arbitrary face $Q = I_1 \times I_2 \times \cdots \times I_d$. Without loss of generality, we can assume that only $I_1$ is degenerate. For a face $Q^{\prime} =  I_1^{\prime} \times I_2^{\prime} \times \cdots \times I_d^{\prime}$ adjacent to $Q$, we see
\begin{align}
Q \cap Q^{\prime} = (I_1 \cap I_1^{\prime}) \times (I_2 \cap I_2^{\prime}) \times \cdots \times (I_d \cap I_d^{\prime}) \in \K^d_{d-2}.
\label{adjacent}
\end{align}
We count the number of possible $Q^{\prime}$ as follows. 
\begin{itemize}
\item Suppose that $I_1^{\prime}$ is degenerate. It follows from (\ref{adjacent}) that $I_1 = I_1^{\prime}$ and only one of $(I_2 \cap I_2^{\prime}), \ldots ,(I_d \cap I_d^{\prime})$ is degenerate. For the degenerate $(I_i \cap I_i^{\prime})$, $I_i^{\prime}$ must intersect with either left or right end of $I_i$, since both $I_i$ and $I_i^{\prime}$ are nondegenerate. For other coordinates, we clearly see that $I_j = I_j^{\prime}$. Therefore, we have $(d-1) \times 2 = 2(d-1)$ different possible $Q^{\prime}$. \par
\item Suppose that $I_i^{\prime}$ is degenerate for some $i \neq 1$. Then, only $(I_1 \cap I_1^{\prime})$ and $(I_i \cap I_i^{\prime})$ are the degenerate intervals of $Q \cap Q^{\prime}$. From the same discussion as above, each $I_1^{\prime}$ and $I_i^{\prime}$ has 2 combinations, and the other coordinates coincide. Therefore, we have $(d-1)\times 2 \times 2 = 4(d-1)$ different possible $Q^{\prime}$. 
\end{itemize}
Thus the number of possible adjacent $Q^{\prime}$ is $6(d-1)$. Let us denote by $\sigma(n)$ the number of paths with $n$ faces from the origin. Then $\sigma(n)$ is bounded above by $6(d-1) \times [6(d-1)-1]^{n-2}$. Let $N(n)$ be the random variable counting the number of open paths from $Q_0$ which consist of greater than or equal to $n$ faces. For $p < 1/(6d-7)$, we can estimate
\begin{align*}
\pface(p) &\leq P_p(N(n) \geq 1) \\
               &\leq E_p(N(n)) \\
               & = p^n \sigma(n) \\
               & \leq 6(d-1)p^2 \times [6(d-7)p]^{n-2} \longrightarrow 0
\end{align*}
 as $n \longrightarrow \infty$. Thus we obtain $\pface(p) = 0$, which completes the proof of $1/(6d-7) \leq \face(d)$.
\end{proof}

\begin{prop}
\rm
\label{face_upper}
For $d \geq 2$, it holds that
\begin{align*}
\face(d+1) \leq \face(d).
\end{align*}
\end{prop}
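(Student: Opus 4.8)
The plan is to mimic the dimension-monotonicity argument for bond percolation given in Remark \ref{bond_dimension}, realizing the $(d-1)$-dimensional face percolation on $\R^d$ inside the $d$-dimensional face percolation on $\R^{d+1}$ by thickening every face in the new coordinate direction. Concretely, I would introduce the map $\phi : \K^d_{d-1} \to \K^{d+1}_d$ defined by $\phi(Q) = Q \times [0,1]$, appending a nondegenerate interval $[0,1]$ in the $(d+1)$-st coordinate. Since $\dim Q = d-1$, the image $\phi(Q)$ has dimension $d$, so it is genuinely a face of $\R^{d+1}$ (i.e.\ a codimension-one cube), and $\phi$ is clearly injective. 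Note moreover that the origin $Q_0 = [0,0] \times [0,1] \times \cdots \times [0,1]$ of $\K^d_{d-1}$ is carried to the origin $[0,0] \times [0,1] \times \cdots \times [0,1]$ of $\K^{d+1}_d$.

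The key verification is that $\phi$ transports adjacency. If $Q \backsim Q'$ in $\R^d$, that is $Q \cap Q' \in \K^d_{d-2}$, then $\phi(Q) \cap \phi(Q') = (Q \cap Q') \times [0,1]$ is an elementary cube of dimension $(d-2)+1 = d-1$, hence lies in $\K^{d+1}_{(d+1)-2}$, so $\phi(Q) \backsim \phi(Q')$ in $\R^{d+1}$. Consequently $\phi$ sends any connected set of faces in $\R^d$ to a connected set of faces in $\R^{d+1}$, and in particular it sends the cluster $C(Q_0)$ of $\R^d$-face percolation into the cluster of $\phi(Q_0)$ in $\R^{d+1}$-face percolation. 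On the probabilistic side, the faces $\{\phi(Q) : Q \in \K^d_{d-1}\}$ form a subcollection of $\K^{d+1}_d$ that are open independently with probability $p$ under $P_p$, so the bijection $Q \leftrightarrow \phi(Q)$ couples this restricted configuration with an $\R^d$-face percolation configuration. Hence the cluster of $\phi(Q_0)$ computed using only the transported adjacencies is distributed exactly as $C(Q_0)$ in $\R^d$, and it is contained in the genuine $\R^{d+1}$-cluster of $\phi(Q_0)$, whose adjacency relation is at least as rich on these faces.

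Combining the two observations gives $\pface_{d+1}(p) \geq \pface_d(p)$ for every $p$, whence $\{p : \pface_d(p) > 0\} \subseteq \{p : \pface_{d+1}(p) > 0\}$, and therefore $\face(d+1) \leq \face(d)$. The argument is essentially routine, and I do not expect a substantive obstacle: the only point requiring care is the adjacency check, namely confirming that thickening by a single nondegenerate interval raises the dimension of face intersections by exactly one, so that the $(d-2)$-dimensional contacts demanded by adjacency in $\R^d$ become precisely the $(d-1)$-dimensional contacts demanded by adjacency in $\R^{d+1}$.
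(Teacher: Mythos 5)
Your proposal is correct and follows essentially the same route as the paper: both couple the two models via the thickening map $Q \mapsto Q \times [0,1]$, verify that adjacency is preserved because $Q \cap Q' \in \K^d_{d-2}$ implies $(Q \cap Q') \times [0,1] \in \K^{d+1}_{d-1}$, and conclude $\pface(p)$ is monotone across dimensions, hence $\face(d+1) \leq \face(d)$. The only difference is presentational (you phrase the coupling as embedding $\R^d$-configurations into $\R^{d+1}$, while the paper declares a face of $\R^d$ open iff its thickening is open), which is the same construction.
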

\begin{proof}
Consider face percolation on $\R^{d+1}$ with probability $p \in [0,1]$, and define a face $Q =  I_1 \times I_2 \times \cdots \times I_d$ in $\R^d$ to be open if and only if the face $\tilde{Q} := I_1 \times I_2 \times \cdots \times I_d \times [0,1]$ in $\R^{d+1}$ is open. This induces face percolation on $\R^d$ with probability $p$. In this situation, if the two faces  $Q, Q^{\prime}$ in $\R^d$ are adjacent, so are the corresponding faces $\tilde{Q}, \tilde{Q}$ in $\R^{d+1}$ (see Figure \ref{fig:face}). Indeed, $Q \cap Q^{\prime} \in \K^d_{d-2}$ implies $\tilde{Q} \cap \tilde{Q}^{\prime} = Q \cap Q^{\prime} \times [0, 1] \in \K^{d+1}_{d-1}$. Thus $\face(d+1) \leq \face(d)$.

\begin{figure}[H]
  \centering
  \includegraphics[width=6cm]{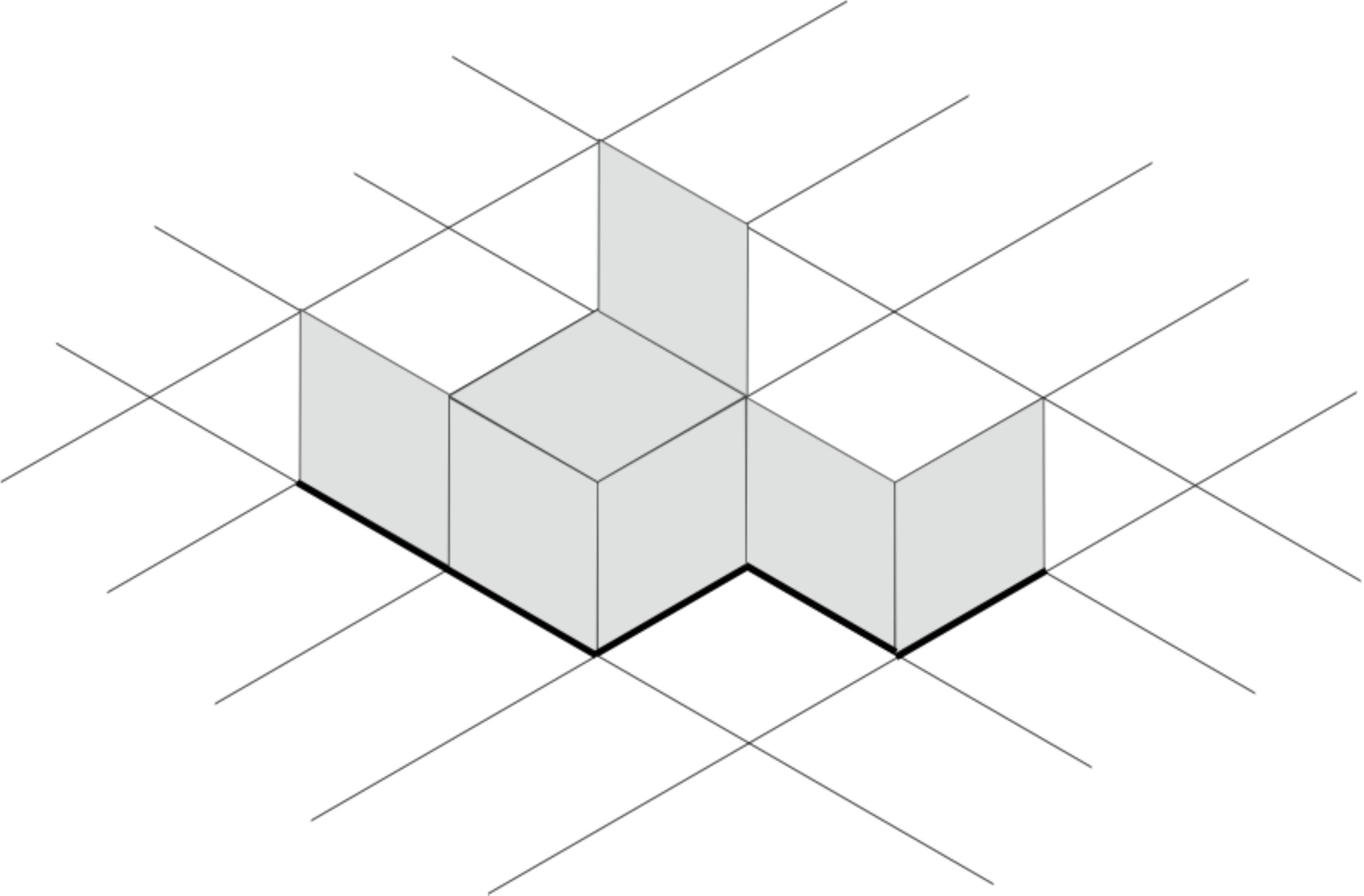}
  \caption{$d=2$. Faces on $\R^3$ (gray) and on $\R^2$ (thick)}
   \label{fig:face}
\end{figure}
\end{proof}
From Remark \ref{rem:2d} and Proposition \ref{face_upper}, we obtain the estimate $\face(d) \leq 1/2$ for any $d \geq 2$. 
\par
We now construct the hole graph over face percolation model. For a configuration $\omega \in \Omega$, we denote by $K(\omega)$ its realization into $\R^d$, i.e, a cubical set $K(\omega) := \bigcup_{Q \text{:open}} Q$ with dimension $d-1$. We simply denote $G(K(\omega))$ and $G^n(K(\omega))$ by $G(\omega)$, $G^n(\omega)$, respectively. We set the origin $0^* = (1/2, \ldots ,1/2)$ and define the percolation probability and the critical probability as
\begin{align*}
&\phole(p) := P_p(|G_{0^*}(\omega)| = \infty),  \\
&\hole := \inf \{p : \phole(p) > 0 \},
\end{align*}
respectively. We call this model {\it the hole percolation model}.
\begin{rem}
\rm
\label{increasing_event}
Note that the event $\{|G_{0^*}(\omega)| = \infty \}$ is increasing. Indeed, when we add an open face $Q$ to a configuration $\omega$, we easily see that the possible changes of the induced hole graph is given as follows (see Figure \ref{increasing}):
\begin{itemize}
\item[(a)] a vertex is  divided into two vertices and a connecting edge, or
\item[(b)] a new vertex is generated.
\end{itemize}

\begin{figure}[H] 
  \centering
  \includegraphics[width=8cm]{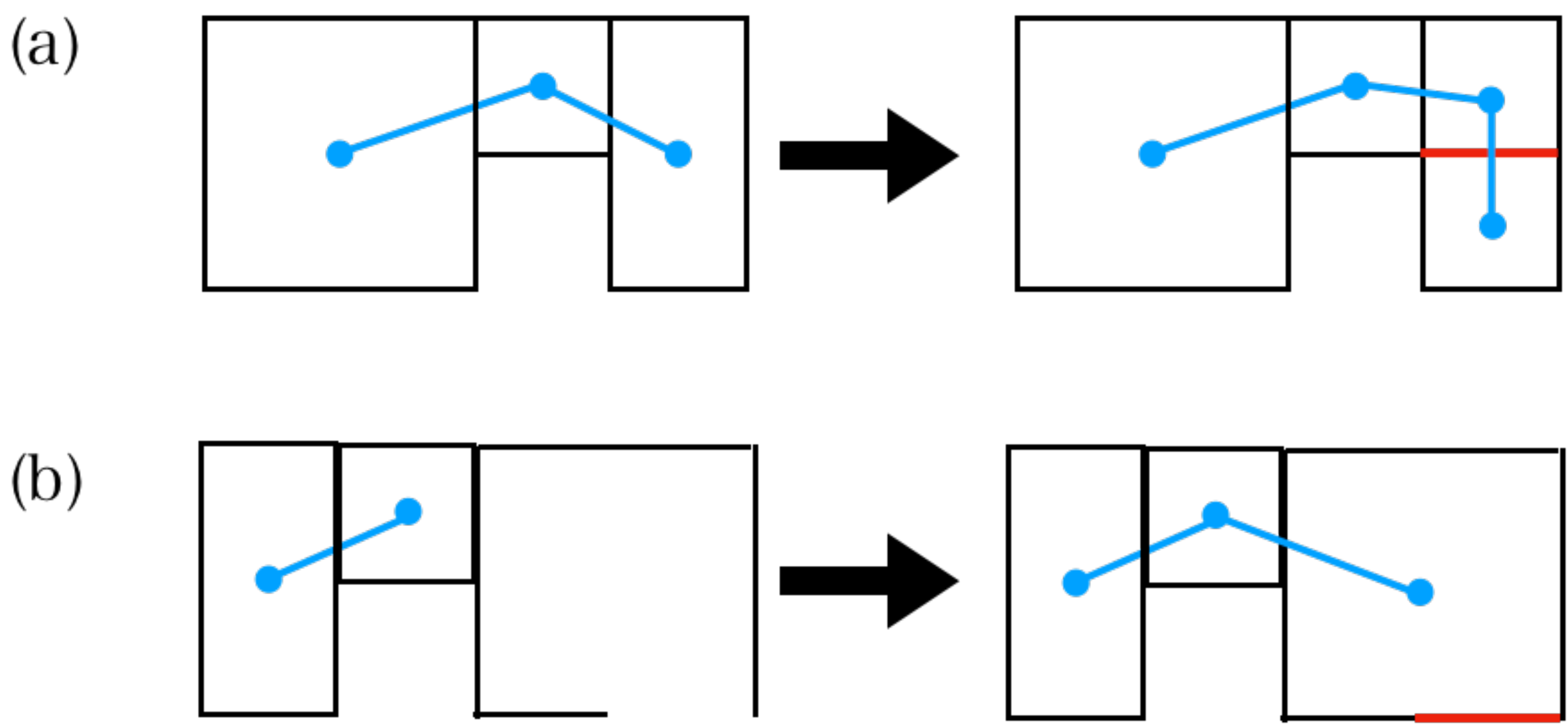} 
  \caption{the case of (a) and (b)}
  \label{increasing}
\end{figure}  

Note that these cases cannot obstruct the event $\{ G_{0^*}(\omega) = \infty \}$ though the graph structure is changed. From this, we can see that  the percolation probability $\phole(p)$ defined above is an increasing function of $p$ (see \cite[Theorem 2.1]{Grimmett} for the proof).
Thus, for the critical probability $\hole$, we have the following relation:
\begin{equation*}
\phole(p) \begin{cases}
 = 0, & \text{if } p < \hole, \\
 > 0, & \text{if } p > \hole.
\end{cases}
\end{equation*}
\end{rem}

From now on, we will denote by $(\Omega, \F, P_p)$ the probability space of the face percolation model with probability $p$. 

\subsection{Main theorems}
In this subsection, we show the main theorems. First, we estimate the critical probability $\hole$ of the hole percolation model as follows.
\begin{theo}\label{theo:pc_estimate}
\rm
\label{upper}
For any dimension $d \geq 2$, it holds that
\begin{align*}
\face(d) \leq \hole(d) \leq 1 - \bond(d).
\end{align*}
\end{theo}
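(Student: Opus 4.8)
The plan is to establish the two inequalities separately. The upper bound $\hole(d)\le 1-\bond(d)$ comes from the bond--face duality, while the lower bound $\face(d)\le\hole(d)$ comes from comparing an infinite hole cluster with an infinite open-face cluster, together with a topological analysis of the unbounded complementary region.

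For the upper bound I would show that $p>1-\bond(d)$ forces $\phole(p)>0$. Move to the dual lattice $(\mathbb{L}^d)^*$, declaring a dual bond open exactly when its transverse face is closed, so that dual bonds are open with probability $1-p$ and closed with probability $p$. Two phases occur at once. Since $1-p<\bond(d)$, the open dual bonds are subcritical, so every open dual cluster is finite; hence there is no unbounded complementary region and every dual vertex lies in a hole. Since $p>1-\bond(d)\ge\bond(d)$, using $\bond(d)\le 1/2$ from Remark~\ref{bond_dimension}, the closed dual bonds are supercritical, so almost surely there is an infinite cluster $\mathcal B$ of closed bonds. Each closed dual bond is an open face separating its two endpoints, which therefore lie in two distinct, and hence adjacent, holes; traversing $\mathcal B$ produces a connected family of holes that is infinite because each hole is finite. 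Thus an infinite hole cluster exists almost surely, and a routine translation-invariance and union-bound argument upgrades this to $P_p(|G_{0^*}(\omega)|=\infty)>0$, giving $\hole(d)\le 1-\bond(d)$.

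For the lower bound I would prove the contrapositive: $p<\face(d)$ implies $\phole(p)=0$. Two facts hold almost surely. First, since the open faces are subcritical there is no infinite open-face cluster. Second, since $1-p>1/2\ge\bond(d)$ the open dual bonds are supercritical, so an unbounded complementary region $D_0$ exists. I would then argue deterministically that these two facts preclude an infinite hole cluster. Extracting (by K\"onig's lemma, the hole graph being locally finite) an infinite hole path $D_0'\sim D_1'\sim\cdots$, each adjacency is recorded by a shared open face, and as one walks from hole to hole either the successive shared faces stay face-connected, in which case they assemble into an infinite open-face cluster and contradict the first fact, or they repeatedly fail to connect. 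A failure occurs exactly at a shell-shaped hole whose boundary splits into disjoint inner and outer surfaces, and chaining such failures forces a strictly increasing sequence of finite enclosing surfaces; because distinct finite face clusters are never face-adjacent, consecutive enclosing surfaces are separated by a full unit layer, so the regions they enclose grow in every direction and exhaust $\R^d$, leaving no unbounded complement and contradicting the second fact. Hence every hole cluster is finite and $\phole(p)=0$.

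The step I expect to be the main obstacle is this last deterministic dichotomy: controlling the global topology of an infinite hole cluster and showing that a boundary which repeatedly fragments must arise from genuine outward nesting that eventually fills space. Making it rigorous requires a careful treatment of the enclosing surfaces of holes and their nesting order, the fact that a locally finite cubical configuration cannot accumulate infinitely many pairwise disjoint nested surfaces in a bounded region, and the claim that finitely many bounded obstacles cannot disconnect $\R^d$ at infinity, so that $D_0$ is well defined and its persistence is genuinely incompatible with unbounded nesting.
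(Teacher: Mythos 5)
Your upper bound is essentially the paper's argument: both rest on the duality under which holes are exactly the finite clusters of the dual bond process (Proposition \ref{prop:hole_bijection}, and in the paper's write-up Lemma \ref{obs:taiou}), and conclude that for $1-p<\bond(d)$ every dual vertex lies in a hole and all these holes join into one infinite cluster. Two caveats: the claim that the endpoints of a closed dual bond lie in \emph{distinct} holes is false (they may be joined by an open dual path that goes around the corresponding open face), though your argument survives because consecutive vertices of your walk lie in the same or in adjacent holes, which is all you need; and the facts ``every dual vertex in a finite dual cluster lies in a hole'' and ``holes contain finitely many dual vertices'' are precisely the content of Proposition \ref{prop:hole_bijection}, which requires real work (Lemma \ref{bounded}) and cannot simply be asserted.

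The genuine gap is in the lower bound. Your probabilistic inputs are the same as the paper's (face subcriticality at $p<\face(d)$, and dual-bond supercriticality via $1-p>1/2\geq \bond(d)$, which tacitly uses $\face(d)\leq 1/2$ from Remark \ref{rem:2d} and Proposition \ref{face_upper}), but you funnel everything through the purely deterministic dichotomy: (no infinite open-face cluster) and (an unbounded complementary region exists) together forbid an infinite hole cluster. The step ``chaining such failures forces a strictly increasing sequence of finite enclosing surfaces'' is exactly the crux, and it is not proved; it is also where the difficulty hides. The hole path does not only step outward: at a shell-shaped hole it can dive \emph{inward} into a cavity, so the surfaces you extract are not automatically nested or increasing --- you must show that inward excursions terminate (strictly nested cubical surfaces lose at least a unit in every coordinate direction, so they bottom out) and that the outward steps then produce genuine unbounded nesting; you also need that the relevant boundary pieces (the outer boundary of a hole, or of a filled face cluster) are face-connected, a nontrivial topological fact for $d\geq 3$. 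The paper sidesteps this entire analysis: it needs only the weaker deterministic claim that an infinite hole cluster at $0^*$ yields arbitrarily large face clusters \emph{enclosing} $0^*$ (its event $A$), and then kills $A$ probabilistically via the interpolating events $A_{m,n}$: a cluster of size at least $m$ enclosing $0^*$ either encloses all of $\B(N)$, which is unlikely by Lemma \ref{bighole} (the infinite dual cluster cannot be enclosed), or must intersect $\Lambda^N$, which has probability at most $|\Lambda^N|\,P_p(|C(Q)|\geq m)\to 0$ by subcriticality. This localization by a union bound over the finitely many faces of $\Lambda^N$ is the device that replaces your global nesting-and-exhaustion argument; without it, your proof is incomplete at its central step (and to be fair, the corresponding weaker claim is asserted rather tersely in the paper as well, but it demands considerably less than your dichotomy).
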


\begin{rem}
\rm
\label{exact_1/2}
It immediately follows from the Proposition \ref{face_lower} that $\hole$ is in the open interval $(0,1)$.
For $d=2$, it follows from Remark \ref{rem:2d} that
\begin{align*}
1/2 \leq \hole(2) \leq 1 - 1/2 = 1/2,
\end{align*}
hence $\hole(2) = 1/2$.
\end{rem}
The other main theorem we prove is an analogue of Theorem \ref{th:uniqueness}.
\begin{theo}\label{theo:hole_unique}
\rm
\label{unique}
Suppose $d \geq 2$. Let $N_{\infty}$ be the random variable which counts the number of infinite hole clusters, and suppose $\phole(p) > 0$. Then, $N_{\infty} = 1$ almost surely.
\end{theo}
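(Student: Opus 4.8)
The plan is to adapt the Burton--Keane argument behind Theorem \ref{th:uniqueness} to the hole graph. Throughout I assume $\hole < p < 1$; the boundary case $p=1$ is trivial, since then $K(\omega)$ is the full $(d-1)$-skeleton, every unit cube is an isolated hole, and the hole graph is the (connected) dual grid, so $N_{\infty}=1$ directly. The first step is purely soft: exactly as in the bond case, the $\Z^d$-action by translation on $\K^d_{d-1}$ preserves $P_p$ and is ergodic, and a translation sends holes to holes and preserves the adjacency relation $\backsim$, hence sends infinite hole clusters to infinite hole clusters. Consequently $\{N_{\infty}=k\}$ is translation invariant for each $k\in\mathbb N\cup\{\infty\}$, and ergodicity forces $N_{\infty}$ to equal an almost sure constant $k$. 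Since $\phole(p)>0$ gives $P_p(N_{\infty}\ge 1)>0$, we have $k\ge 1$, so it remains to exclude $2\le k<\infty$ and $k=\infty$. It is convenient to recall here the dual picture: a hole is exactly a finite cluster of the dual bond percolation on $(\mathbb{L}^d)^*$ with parameter $1-p$, and two holes are adjacent in the hole graph if and only if they touch, i.e.\ contain a pair of dual vertices adjacent in $(\Z^d)^*$. Thus the vertex set of any infinite hole cluster is an infinite connected subset of $(\Z^d)^*$.

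To exclude $2\le k<\infty$ I would use a finite-energy merging argument. The key local surgery is to \emph{open every face contained in a window $\Lambda^m$}. After this modification each unit cube inside $\Lambda^m$ is surrounded by open faces, hence becomes an isolated cube-hole; these cube-holes touch their neighbours and therefore form a single connected finite subgraph $\mathcal G$ of the modified hole graph. Moreover, any infinite hole cluster meeting $\B(n)\subset\Lambda^m$ becomes attached to $\mathcal G$: such a cluster has a connected infinite vertex set, so it contains dual-lattice-adjacent vertices $u^*\in\Lambda^m$, $v^*\notin\Lambda^m$; the cube at $u^*$ lies in $\mathcal G$, the exterior cube at $v^*$ is unchanged, and their adjacency makes the exterior part attach to $\mathcal G$. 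Since only finitely many infinite clusters meet $\B(n)$ (at most $|\B(n)|$), all of those that meet it are fused into one. By Remark \ref{increasing_event}, opening faces only splits a hole into two joined by an edge or creates a new hole-vertex, so it never disconnects the hole graph and never increases $N_{\infty}$; being a finite change it also creates no new infinite cluster. Hence, on the event that at least two infinite clusters meet $\B(n)$ (which has positive probability for large $n$ when $k\ge 2$), the surgery strictly decreases $N_{\infty}$. A standard conditioning on the configuration outside $\Lambda^m$, together with finite energy ($0<p<1$), shows $P_p(N_{\infty}\le k-1)>0$, contradicting $N_{\infty}=k$ almost surely.

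To exclude $k=\infty$ I would run the trifurcation count. Declare a unit cube $\kappa$, with centre $z^*\in(\Z^d)^*$, to be a \emph{trifurcation} if all its faces are open (so $\{z^*\}$ is an isolated cube-hole lying in an infinite cluster) and deleting the vertex $z^*$ from the hole graph splits its infinite cluster into exactly three infinite components. If $k=\infty$, then with positive probability at least three distinct infinite clusters come within a bounded distance of a fixed cube near the origin, and a bounded-window finite-energy modification can isolate that central cube and attach it to exactly three of them while keeping them otherwise separate; this produces a trifurcation. Translation invariance then yields a positive density $\rho>0$ of trifurcation cubes. On the other hand, assigning to each trifurcation in a box three vertex-disjoint infinite arms and truncating them at their first exit from the box realises the trifurcations as the degree-$\ge 3$ branch vertices of a forest whose leaves sit on the boundary; since a forest with $L$ leaves has at most $L$ branch vertices, the number $T_n$ of trifurcations in $\B(n)$ satisfies $T_n\le|\partial \B(n)|\le Cn^{d-1}$ deterministically. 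Comparing with $E_p(T_n)\asymp \rho\, n^{d}$ forces $\rho=0$, so $k\neq\infty$. Together with the previous paragraph this gives $k=1$.

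The hard part, and the feature that distinguishes this from the classical proof, is the nonlocality emphasised in the introduction: whether a bounded region is a hole is not decided inside any finite window, since it must be globally enclosed by open faces. This is precisely why I saturate an entire box with open faces in the merging step — guaranteeing that the interior cubes are genuinely enclosed and hence bona fide holes regardless of the exterior — rather than attempting a thin local bridge whose enclosing faces might lie outside the window. The same difficulty resurfaces in the trifurcation argument: I must verify that isolating one cube while attaching it to three prescribed clusters truly splits the infinite cluster into three infinite pieces, and that the three arms can be chosen disjointly and located at distinct boundary exits. Carrying out these local surgeries so that their \emph{global} effect on the infiniteness and number of hole clusters is fully controlled — leaning on the monotonicity of Remark \ref{increasing_event} to rule out accidental increases of $N_{\infty}$ — is the technical crux of the proof.
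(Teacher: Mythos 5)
Your proposal follows the same architecture as the paper's proof: ergodicity gives an almost-sure constant $k$; a box-opening surgery plus finite energy excludes $2\le k<\infty$ (this is the content of the paper's Lemma \ref{finite_case}); and a Burton--Keane trifurcation count excludes $k=\infty$ (your forest/leaf count is an equivalent formulation of Lemma \ref{lemm:compatible} and Lemma \ref{lem:trifurcation=0}, whose definition of trifurcation -- an isolated cube-hole whose removal leaves exactly three infinite components -- matches yours). The merging step and the deterministic counting bound are correct in essence. The genuine gap is the step you yourself call ``the technical crux'': producing a trifurcation with positive probability from $k=\infty$. You assert that ``a bounded-window finite-energy modification can isolate that central cube and attach it to exactly three of them while keeping them otherwise separate,'' but this is precisely where the paper does its real work. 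Its construction is explicit: pick $a_i^*$ ($i=1,2,3$) in distinct infinite clusters adjacent to $\B(n)$, join them to $0^*$ by three dual paths $\pi_i$ that pairwise repel, open every face incident to a vertex of each $\pi_i$ (turning each path into a chain of isolated cube-holes), keep open the boundary faces of $\Lambda^n$ belonging to the exterior clusters $I_i$, and close all other faces of $\Lambda^n$; one must then verify that the closed region $\B(n)\setminus(\pi_1\cup\pi_2\cup\pi_3)$ does not generate new holes that re-connect the three branches. Because hole creation is nonlocal -- the very difficulty you emphasize -- this verification is not automatic, and leaving it as an acknowledged assertion leaves the heart of the theorem unproved.

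Moreover, this is exactly where your claimed uniform treatment of $d\ge2$ breaks down. The verification above uses $d\ge3$ essentially: in dimension at least $3$, the dual vertices of $\B(n)\setminus(\pi_1\cup\pi_2\cup\pi_3)$ form a single connected set, so the closed region cannot produce a hole adjacent to two different branches. In $d=2$, three disjoint arms from the central cube to $\partial\B(n)$ necessarily cut the window into three sectors; a closed sector merges with whatever lies just outside the window and can become part of a hole whose boundary contains faces of cubes on two \emph{different} arms, creating an adjacency between two branches and destroying the trifurcation. The paper therefore proves the trifurcation step only for $d\ge3$ and disposes of $d=2$ by a separate, easy argument (Remark \ref{behave_of_theta2}: $\phole(p)=0$ for $p<1/2$, while for $p\ge1/2$ there is no infinite dual cluster and the hole graph is a single infinite cluster). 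To complete your proof you need both pieces: the separate $d=2$ argument, and the explicit $d\ge3$ surgery together with the connectivity verification that rules out accidental re-connections.
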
 
\begin{rem}
\rm
\label{ergodic}
Similarly to the case of the bond percolation model, it is clear that $(\Omega, \F, P_p, \tau_x)$ is ergodic. Since the event $\{ N_{\infty} = k\}$ is translation invariant for $k \in \mathbb{N} \cup \{ \infty \}$, there exists $k \in \mathbb{N} \cup \{\infty\}$, depending only on $p$, such that $P_p(N_{\infty} = k ) = 1$. Theorem \ref{unique} states that $k$ must be 0 or 1, which implies
\begin{equation*}
N_{\infty} = \begin{cases}
0, & \text{if } p < \hole, \\
1, & \text{if } p > \hole,
\end{cases}
\end{equation*}
almost surely. 
\end{rem}
Theorem \ref{unique} also implies the analogue of the estimation (\ref{supercritical}) in the supercritical phase. Note that the FKG inequality (Theorem \ref{FKG}) can be applied to the probability space $(\Omega, \F, P_p)$ of face percolation and the increasing event $\{|G_{x^*}| = \infty \}$.
\begin{theo}\label{theo:connection_in_supercritical}
\rm
If $p > \hole$, there exists $c := c(p) > 0$ such that
\begin{align*}
P_p(x^* \overset{\text{hole}} \longleftrightarrow y^*) \geq c  \ \text{for any } x^*, y^*  \in (\Z^d)^*.
\end{align*} 
\end{theo}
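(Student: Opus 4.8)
The plan is to mimic exactly the argument used earlier to derive (\ref{supercritical}) from the uniqueness theorem and the FKG inequality, now replacing bond clusters by hole clusters and invoking the hole-specific uniqueness result. First I would note that since $p > \hole$, Remark \ref{increasing_event} gives $\phole(p) > 0$, so Theorem \ref{unique} applies and yields $N_{\infty} = 1$ almost surely; that is, there is a unique infinite hole cluster.

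Next I would reduce the connection event to the event that both $x^*$ and $y^*$ lie in infinite hole clusters. On the event $\{|G_{x^*}(\omega)| = \infty\} \cap \{|G_{y^*}(\omega)| = \infty\}$, both $G_{x^*}$ and $G_{y^*}$ are infinite connected components of the hole graph; by uniqueness they must coincide, so $G_{x^*} = G_{y^*}$ and hence $x^* \connect y^*$. This gives
\begin{align*}
P_p(x^* \connect y^*) \geq P_p\bigl(|G_{x^*}| = \infty, \, |G_{y^*}| = \infty\bigr).
\end{align*}
Then I would apply the FKG inequality (Theorem \ref{FKG}) to the two increasing events $\{|G_{x^*}| = \infty\}$ and $\{|G_{y^*}| = \infty\}$. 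The increasing property for a general dual vertex $x^*$ follows from the $0^*$ case established in Remark \ref{increasing_event} together with translation invariance, and, as noted just before the theorem statement, FKG applies to the face-percolation probability space $(\Omega, \F, P_p)$. This bounds the right-hand side below by $P_p(|G_{x^*}| = \infty)\,P_p(|G_{y^*}| = \infty)$. Finally, by translation invariance of $P_p$ under the integer shift sending $0^*$ to $x^*$, each factor equals $\phole(p)$, so the product is $\phole(p)^2 > 0$, a constant $c := c(p)$ independent of $x^*, y^*$.

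Since this statement is essentially a corollary of Theorem \ref{unique}, no step is genuinely difficult; the only points requiring care are confirming that $\{|G_{x^*}| = \infty\}$ is increasing for \emph{every} $x^*$ and not merely for the origin (immediate by translation), and checking that the uniqueness result is invoked in the correct regime $\phole(p) > 0$. The substantive work, namely the uniqueness of the infinite hole cluster, is already carried out in Theorem \ref{unique}, on which the present argument rests.
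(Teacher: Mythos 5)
Your proposal is correct and follows essentially the same route as the paper: reduce $\{x^* \connect y^*\}$ to the event that both $G_{x^*}$ and $G_{y^*}$ are infinite via Theorem \ref{theo:hole_unique}, then apply the FKG inequality to the increasing events $\{|G_{x^*}| = \infty\}$ and $\{|G_{y^*}| = \infty\}$ to obtain the bound $\phole(p)^2 > 0$. Your added remarks on translation invariance (for the increasing property at general $x^*$ and for identifying each factor with $\phole(p)$) are details the paper leaves implicit, but the argument is the same.
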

\begin{proof}
If $p > \hole$, then there exists the unique infinite hole cluster almost surely and we obtain
\begin{align*}
P_p(x^* \connect y^*) \geq P_p(|G_{x^*}| = \infty,\,  |G_{y^*}| = \infty)
\end{align*}
for any $x^*, y^*  \in (\Z^d)^*$. We may apply the FKG inequality to the increasing events $\{ |G_{x^*}| = \infty \}$, $\{|G_{y^*}| = \infty \}$ to find that the right hand side is bounded below by
\begin{align*}
P_p(|G_{x^*}| = \infty)P_p(|G_{y^*}| = \infty) = \phole(p)^2 > 0,
\end{align*}
which does not depend on $x^*, y^*  \in (\Z^d)^*$. 
\end{proof}
\begin{rem}
\rm
In the subcritical $p < \hole$, it is easy to show that the probability $P_p(x^* \connect y^*)$ converges to 0 as $\|x^* - y^*\|_1 \longrightarrow \infty$. Indeed, it is clear that 
\begin{align*}
P_p(0^* \connect x^*) \leq P_p(0^* \connect \B(n))
\end{align*}
for $2n \leq \|0^* - x^*\|_1$. Since $P_p( \bigcap_n \{ 0^* \connect \B(n) \}) = \phole(p) = 0$, the right hand side converges to 0 as $n \longrightarrow \infty$. 
\end{rem}
\section{Estimates of the critical probability}
\subsection{Bond percolation on the dual lattice}
\label{sec:hyouka}
In this subsection, we explain the main idea for the proof of Theorem \ref{upper}. Let $e^* \in (\mathbb{E}^d)^*$ be a dual bond given by
\begin{align*}
e^* = \big < x^* , x^* + (0,0, \ldots ,\overset{i}{\check{1}}, \ldots , 0)  \big>
\end{align*}
for some $x^* = ((x^*)_1, \ldots, (x^*)_d ) \in (\Z^d)^*$, and let the face $Q_{e^*} \in \K^d_{d-1}$ be defined by
\begin{align*}
Q_{e^*} = &[(x^*)_1-1/2, (x^*)_1+1/2] \times [(x^*)_2-1/2, (x^*)_2+1/2] \times \\
                &\cdots \times \overset{i}{\check{[(x^*)_i + 1/2]}} \times \cdots \times [(x^*)_d-1/2, (x^*)_d+1/2].
\end{align*}
Note that $Q_{e^*}$ is a unique face intersecting $e^*$ (see Figure \ref{bijection}). 
\begin{figure}[H]
  \centering
  \includegraphics[width=4cm]{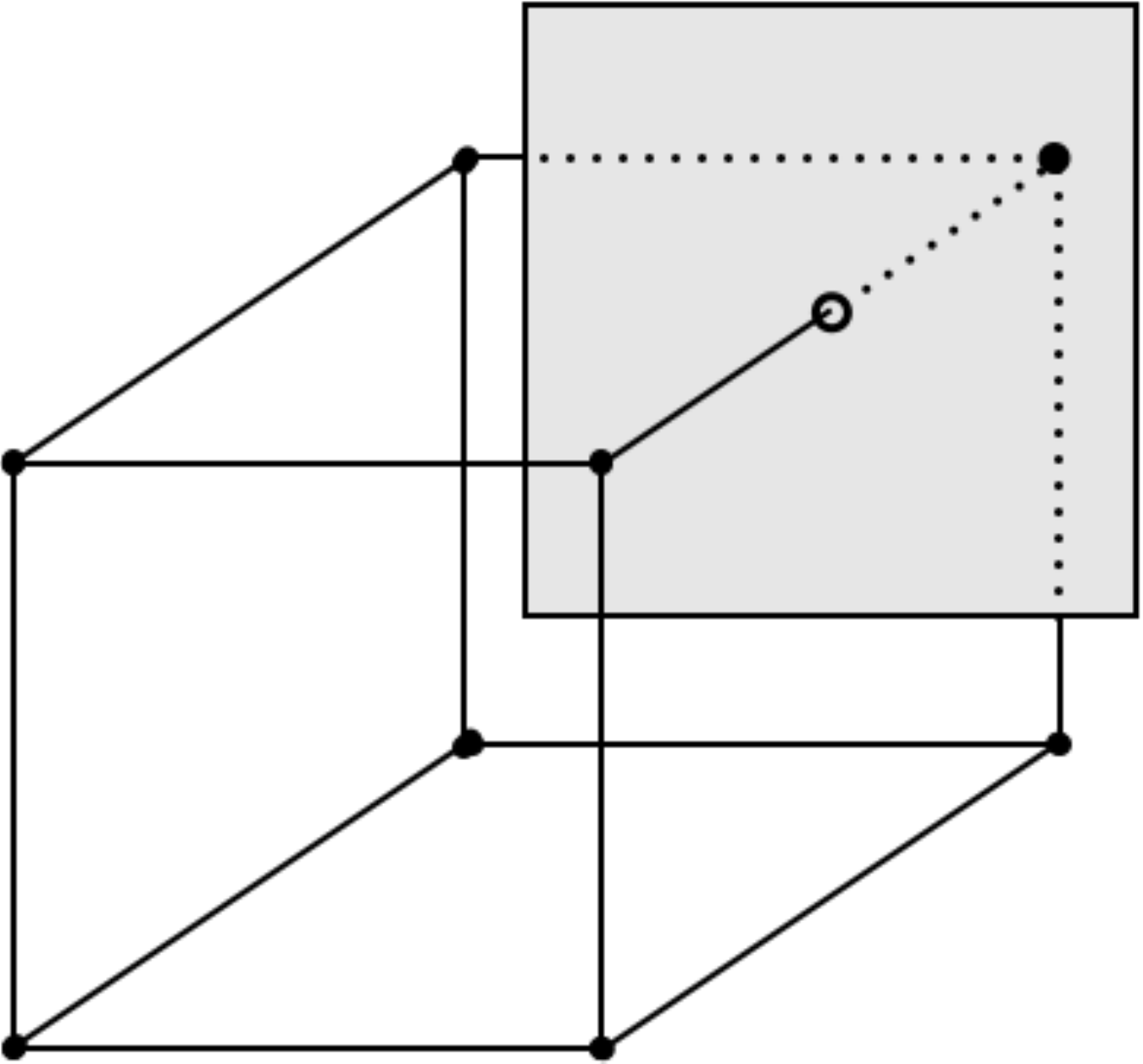}
  \caption{$d=3$. Dual bonds (black) and the intersecting face (gray)}
  \label{bijection}
\end{figure}
We define the configuration of dual bonds  in $(\mathbb{L}^d)^*$ as 
\begin{align}
e^* \in (\mathbb{L}^d)^*: \operatorname{open} \Longleftrightarrow Q_{e^*} \in \mathcal K_{d-1}^d: \operatorname{closed}.
\label{taiou}
\end{align}
This induces bond percolation on $(\mathbb{L}^d)^*$ with probability $1-p$.
From now on, when we consider face percolation, dual bond percolation is also considered by (\ref{taiou}).
\par
When $d = 2$, as we may expect from Figure \ref{hole_bijection}, the hole constructed by open faces corresponds to the finite cluster on $(\mathbb{L}^d)^*$.
\begin{figure}[H]
  \centering
  \includegraphics[width=6cm]{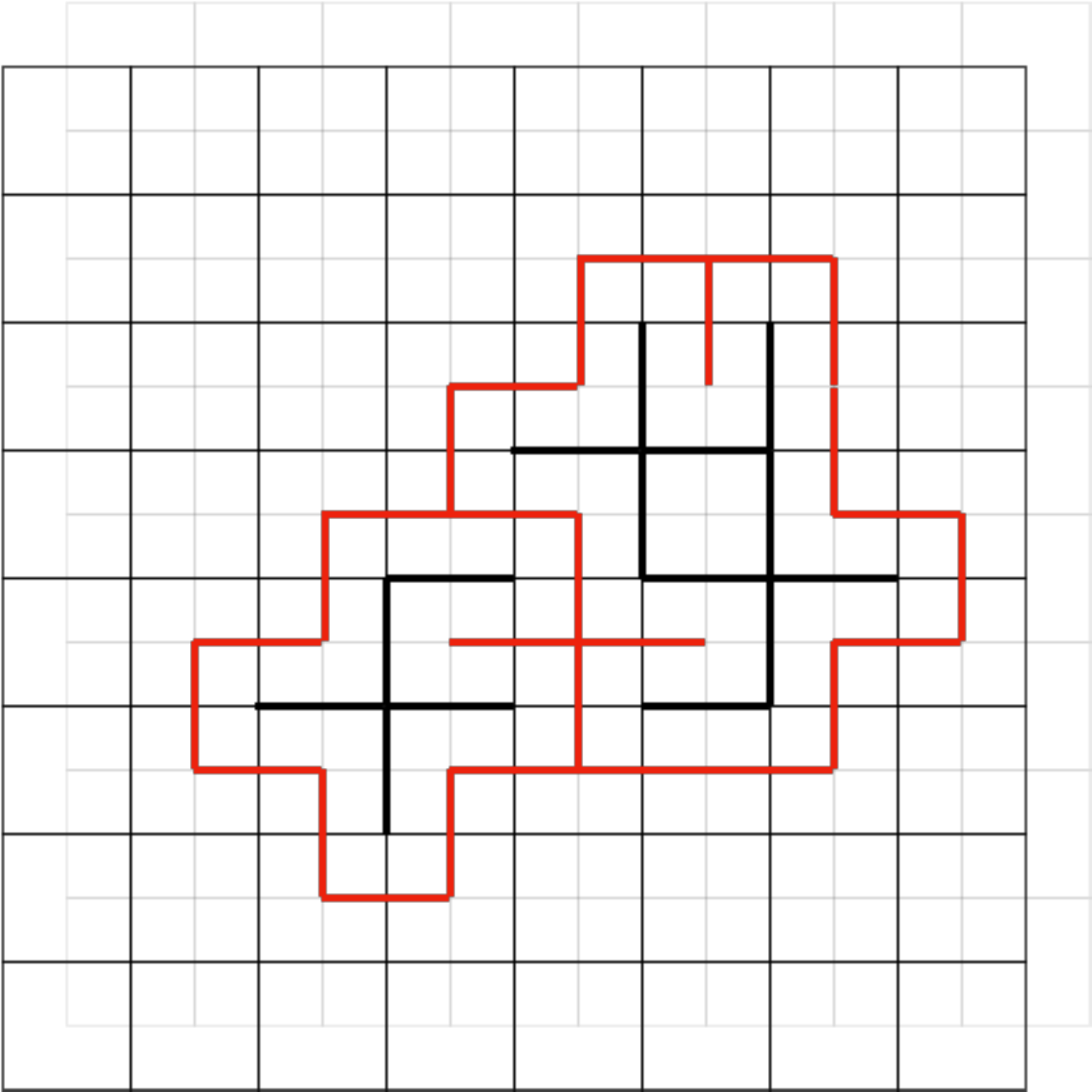}
  \caption{Finite clusters in the dual lattice (black) and faces constructing holes (red)}
  \label{hole_bijection}
\end{figure}
We prove this correspondence in general dimension $d \geq 2$.
\par
\begin{prop}
\rm
\label{prop:hole_bijection}
The holes bijectively correspond to the finite dual cluster in $(\mathbb{L}^d)^*$. Moreover, under this bijective correspondence, two holes $D$, $D^{\prime}$ are adjacent if and only if the corresponding finite clusters $C^*$, $(C^*)^{\prime}$ share some boundary edges, i.e., $\Delta C^* \cap \Delta (C^*)^{\prime} \neq \emptyset$.
\begin{table}[H]
 \centering
  \begin{tabular}{c|c}
    face & dual lattice\\ \hline \hline
    hole & finite cluster  \\ 
   adjacency of holes & sharing some boundary edges of two finite clusters  \\ 
  \end{tabular}
\end{table}
\end{prop}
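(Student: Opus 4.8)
The plan is to set up a precise dictionary between the cells of the primal lattice and those of the dual, and then reduce the whole statement to a single connectivity lemma. Write $U := \R^d \setminus K(\omega)$ for the open complement. Each dual vertex $x^* \in (\Z^d)^*$ sits at the centre of a unique unit $d$-cube, whose open interior is disjoint from every face and hence contained in $U$; this identifies dual vertices with unit $d$-cubes, and I use this identification throughout. The core of the argument is the claim
\begin{equation*}
x^* \text{ and } y^* \text{ lie in the same component of } U \iff x^*,\, y^* \text{ are joined by open dual bonds.}
\end{equation*}
Granting this, the dual vertices contained in a fixed component $D$ of $U$ form exactly one dual cluster, so components of $U$ correspond bijectively to dual clusters. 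A bounded component contains only finitely many cube centres, hence a finite cluster, whereas an unbounded component meets infinitely many cubes (any escape to infinity passes through infinitely many of them) and so contains infinitely many dual vertices, giving an infinite cluster; conversely a finite cluster cannot sit inside an unbounded component, since the claim would force all of that component's infinitely many dual vertices into it. Thus holes, i.e.\ the bounded components, correspond exactly to finite dual clusters, which is the first assertion.

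The direction $(\Leftarrow)$ of the claim is immediate: by the correspondence $(\ref{taiou})$, an open dual bond $e^*$ runs from one cube centre, through the relative interior of the \emph{closed} face $Q_{e^*}$, to the adjacent cube centre, and all of these points lie in $U$, so concatenating such bonds stays inside a single component of $U$.

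The direction $(\Rightarrow)$ is where the real work lies, and I would treat it by a general-position argument, which I expect to be the main obstacle. Given a continuous path $\gamma \subset U$ from $x^*$ to $y^*$, perturb it rel endpoints so that it meets only the interiors of $d$-cubes and the relative interiors of $(d-1)$-faces, avoiding every cell of codimension at least two; a generic $1$-dimensional path misses the codimension-$\geq 2$ skeleton, so this is possible. The perturbation can be kept inside $U$: if $\gamma$ passes through a point of $U$ lying on a $(d-2)$-cell $F$, then none of the (at most four, in the transverse $2$-plane) faces containing $F$ is open, so all flanking cubes and faces are in $U$ and $\gamma$ may be pushed off $F$ without leaving $U$. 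Since $\gamma \subset U$, every face it now crosses is closed, i.e.\ corresponds to an open dual bond, and reading off the ordered sequence of cubes traversed yields a walk $x^* = z_0^*, z_1^*, \ldots, z_m^* = y^*$ along open dual bonds. This proves the claim, and with it the bijection.

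For the adjacency statement, let $D, D'$ be distinct holes with corresponding finite clusters $C^*, (C^*)'$. If $D \backsim D'$, they share a common boundary face $Q$, which is necessarily open; the two $d$-cubes flanking $Q$ cannot both lie in one component (otherwise $Q$ would not bound the other), so one lies in $D$ and the other in $D'$. The unique dual bond $e^*$ with $Q = Q_{e^*}$ is then closed by $(\ref{taiou})$ and has one endpoint in $C^*$ and the other in $(C^*)'$, whence $e^* \in \Delta C^* \cap \Delta (C^*)'$. Conversely, any $e^* \in \Delta C^* \cap \Delta (C^*)'$ must, since $C^* \neq (C^*)'$ forces disjoint vertex sets, be a closed bond with exactly one endpoint in each cluster; its dual face $Q_{e^*}$ is then open and is flanked by a cube of $D$ and a cube of $D'$, so it is a common boundary face and $D \backsim D'$. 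This final part uses only the definition of $\Delta$ and the observation that an open dual bond would place both its endpoints in the same cluster.
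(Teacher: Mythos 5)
Your proposal is correct, and it reaches the bijection by a genuinely different route than the paper. Both arguments share the same easy direction (an open dual bond passes through the relative interior of a closed face, so dual-connected vertices lie in one component of $U = \R^d \setminus K(\omega)$), and both implicitly use that every hole contains a dual vertex. The difference is in the hard direction. The paper never perturbs paths: it first proves Lemma \ref{bounded}, an explicit coordinate computation showing that for a dual cluster $C^*$ the topological boundary of the union of occupied cells satisfies $\partial B_{C^*} = \bigcup_{e^* \in \Delta C^*} Q_{e^*}$, a union of \emph{open} faces. Everything then follows from the elementary point-set fact that a continuous path from inside $B_{C^*}$ to a point outside it must cross $\partial B_{C^*}$: a path to infinity from a finite cluster hits an open face (so the cluster sits in a hole), and a path within a hole between vertices of two different clusters would likewise hit an open face (so the map is well defined and injective). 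You instead prove the full two-sided claim ``same component of $U$ $\iff$ dual-connected'' by putting an arbitrary path in general position so that it crosses only relative interiors of faces, and reading off a dual walk. What your approach buys is a cleaner and stronger dictionary --- it handles unbounded components and infinite clusters in the same breath, which is essentially what the paper must redo later in Lemma \ref{obs:taiou}, and it makes the adjacency statement nearly formal. What it costs is that the transversality step is precisely where the rigor is concentrated: your local push-off justification treats only cells of codimension exactly two, and to be complete you would need to handle all lower-dimensional strata and make the perturbation global (the standard fix: $\gamma$ is compact and $U$ is open, so some $\epsilon$-neighborhood of $\gamma$ lies in $U$, and a single generic $\epsilon$-perturbation, PL or smooth, misses the codimension-two skeleton and crosses faces transversally). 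The paper's boundary computation avoids any appeal to general position, which is presumably why it was chosen.
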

\par
It follows from Proposition \ref{prop:hole_bijection} that the problem of hole generation can be studied as the problem of the bond percolation process.  \par
For a dual vertex $x^* \in (\Z^d)^*$, we denote its {\it occupied cell} $B_{x^*} \subset \R^d$ by 
\begin{align*}
B_{x^*} := \Pi_{i=1}^{d} [(x^*)_i -1/2, (x^*)_i +1/2].
\end{align*}
Then for a dual bond $e^* = \big< x^* , y^* \big>$, it is easy to see
\begin{align}
\label{face=box}
Q_{e^*} = B_{x^*} \cap B_{y^*},
\end{align}
and the interiors of two different occupied cells do not intersect. \par
For a dual cluster $C^*$, we denote the union of occupied cells by $B_{C^*} = \bigcup_{x^* \in C^*} B_{x^*}$. \par
For the proof of Proposition \ref{prop:hole_bijection}, we show the following lemma.
\begin{lemm}
\rm
\label{bounded}
Let $C^*$ be a dual cluster in $(\mathbb{L}^d)^*$. Then, 
\begin{align}
\partial B_{C^*} = \bigcup_{e^* \in \Delta C^*} Q_{e^*}.
\label{eq:boundary}
\end{align}
\end{lemm}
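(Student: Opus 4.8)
The plan is to prove the set equality (\ref{eq:boundary}) by establishing the two inclusions separately, using two elementary facts about the occupied cells: the closed cells $\{B_{x^*}\}_{x^* \in (\Z^d)^*}$ tile $\R^d$ with pairwise disjoint interiors, and the topological boundary of a single cell $B_{x^*}$ is exactly the union of its $2d$ facets $Q_{\langle x^*, y^*\rangle}$ as $y^*$ ranges over the lattice neighbours of $x^*$ (this is just (\ref{face=box}) read facet by facet). Since $B_{C^*} = \bigcup_{x^* \in C^*} B_{x^*}$ is a locally finite union of closed cubes, it is closed, so $\partial B_{C^*} = B_{C^*} \setminus \operatorname{int} B_{C^*}$, and I will use this description throughout. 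I also record that each $e^* = \langle x^*, y^* \rangle \in \Delta C^*$ has exactly one endpoint in $C^*$, say $x^* \in C^*$ and $y^* \notin C^*$: an edge internal to $C^*$ would contribute the facet $Q_{e^*} = B_{x^*} \cap B_{y^*}$ lying between two cells of $B_{C^*}$, hence an interior point, not a boundary point.

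For the inclusion $\supseteq$, I would fix such an $e^* = \langle x^*, y^* \rangle \in \Delta C^*$ with $x^* \in C^*$, $y^* \notin C^*$, and take $z \in Q_{e^*} = B_{x^*} \cap B_{y^*}$. Then $z \in B_{x^*} \subseteq B_{C^*}$. To see $z \notin \operatorname{int} B_{C^*}$, I would show $\operatorname{int}(B_{y^*})$ is disjoint from every other closed cell: a point of $\operatorname{int}(B_{y^*})$ has each coordinate strictly between two consecutive integers, whereas every facet of any cell lies on an integer hyperplane (a short coordinate computation, using that the cell corners are integer points). As $y^* \notin C^*$, this gives $\operatorname{int}(B_{y^*}) \cap B_{C^*} = \emptyset$, and since $z \in B_{y^*} = \overline{\operatorname{int} B_{y^*}}$, points outside $B_{C^*}$ accumulate at $z$, whence $z \in \partial B_{C^*}$. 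Note this argument covers every point of the closed facet $Q_{e^*}$, including its lower-dimensional relative boundary.

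The inclusion $\subseteq$ is where the real work lies, precisely at the lower-dimensional skeleton where several cells meet. Given $z \in \partial B_{C^*}$, it lies in some cell $B_{x^*}$ with $x^* \in C^*$ but not in $\operatorname{int} B_{C^*}$; moreover, approaching $z$ by points outside $B_{C^*}$ and using local finiteness of the tiling, I obtain a cell $B_{u^*}$ with $z \in B_{u^*}$ and $u^* \notin C^*$. The key observation is that the family $\{v^* : z \in B_{v^*}\}$ of cells through $z$ is a combinatorial box: in each coordinate direction for which $z_i \in \Z$ there are exactly two admissible values of $(v^*)_i$, while in the remaining directions the value is unique. This box is connected under single-coordinate moves, and adjacent members differ in one coordinate by $1$, hence correspond to lattice edges $e^*$ with $z \in Q_{e^*}$. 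Since the box contains $x^* \in C^*$ and $u^* \notin C^*$, any path inside it from $x^*$ to $u^*$ crosses from $C^*$ to its complement along some edge $e^* = \langle a^*, b^* \rangle$ with $a^* \in C^*$, $b^* \notin C^*$; then $e^* \in \Delta C^*$ and $z \in B_{a^*} \cap B_{b^*} = Q_{e^*}$, as required.

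The main obstacle is exactly this bookkeeping at the lower-dimensional strata in the $\subseteq$ direction. The relative interiors of facets are trivial to classify: a facet $B_{x^*} \cap B_{y^*}$ is exposed if and only if exactly one of $x^*, y^*$ lies in $C^*$. But a boundary point sitting where many cells meet must still be pinned to a \emph{full closed} exposed facet, and it is the connectivity of the combinatorial box of cells through $z$ that guarantees such a facet exists. Everything else reduces to the coordinate computation giving $\operatorname{int}(B_{y^*}) \cap B_{C^*} = \emptyset$ and to the identity (\ref{face=box}).
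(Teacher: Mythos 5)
Your proof is correct and takes essentially the same route as the paper's: the inclusion $\supseteq$ via the disjointness of $\operatorname{int} B_{y^*}$ from $B_{C^*}$ together with (\ref{face=box}), and the inclusion $\subseteq$ by moving one coordinate at a time through the cells containing the boundary point until an edge crossing from $C^*$ to its complement is found. Your ``combinatorial box'' formulation is just a cleaner packaging of the paper's explicit coordinate-by-coordinate path, and your extraction of a cell $B_{u^*}$ with $u^* \notin C^*$ containing the point (via local finiteness) makes explicit a step the paper leaves implicit.
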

\begin{proof}
Let $a \in \R^d$ be an arbitrary point in $Q_{e^*}$ for some $e^* \in \Delta C^*$. We can set $e^* = \big< x^*, y^* \big> $ for some $x^* \in C^*$ and $y^* \notin C^*$. From (\ref{face=box}), $a$ must be in $B_{x^*} \cap B_{y^*}$, in particular $a \in B_{C^*}$. For any $\epsilon > 0 $, the $\epsilon \mathchar `-$open ball $B(a;\epsilon)$ centered at $a$  intersects the interior of $B_{y^*}$. Since the interior of $B_{y^*}$ is disjoint with $B_{C^*}$, we see $a \in \partial B_{C^*}$. \par
Conversely, suppose $a \in \partial B_{C^*}$. Then there exists $x^* \in C^*$ and $y^* \notin C^*$ such that $a \in B_{x^*} \cap B_{y^*}$. We write 
\begin{align*}
&a = (a_1, a_2, \ldots ,a_d), \\
&x^* = ((x^*)_1, (x^*)_2, \ldots ,(x^*)_d), \\
&y^* = ((y^*)_1, (y^*)_2, \ldots ,(y^*)_d).
\end{align*}
Since $a$ is on some faces, at least one element of $a_1, a_2, \ldots ,a_d$ is an integer. Suppose that $a_{i_1},  \ldots, a_{i_k}$($1 \leq i_1 < \cdots < i_k \leq d$) are integers. Then we can write
\begin{align*}
a_{i_j} = (x^*)_{i_j} \pm 1/2 = (y^*)_{i_j} \pm 1/2
\end{align*}
for $j = 1, \ldots , k$, and
\begin{align*}
a_i \in ((x^*)_i -1/2, (x^*)_i +1/2) \cap ((y^*)_i -1/2, (y^*)_i +1/2) 
\end{align*}
for the other coordinates. Note that any elements of a dual vertex is expressed as $n \pm 1/2$ for some integer $n$. Thus we obtain 
\begin{align*}
(x^*)_{i_j}  = (y^*)_{i_j} \, \text{or} \, (x^*)_{i_j}  = (y^*)_{i_j} \pm 1
\end{align*}
for $j = 1, \ldots , k$, and
\begin{align*}
(x^*)_i = (y^*)_i
\end{align*}
for the other coordinates. Let us retake the coordinates $l_1, \ldots , l_m$ for $(x^*)_{l_i} \neq (y^*)_{l_i}$. Then, we may write
\begin{align*}
(y^*)_{l_j} = (x^*)_{l_j}  + s_{l_j}, 
\end{align*}
where $s_{l_j}$ is equal to $\pm 1$. Let the sequence $x^* = (z^*)^{1}, (z^*)^{2}, \ldots , (z^*)^{m} = y^*$ of dual vertices be as follows.
\begin{align*}
(z^*)^{j+1} = (z^*)^{j}  + (0,0, \ldots ,\overset{l_j}{\check{s_{l_j}}}, \ldots , 0).
\end{align*}
Then, $a \in B_{(z^*)^{j}}$ for any $(z^*)^{j}$. Let $(z^*)^{j}$ be the dual vertex satisfying $(z^*)^j \in C^*$ and $(z^*)^{j+1} \notin C^*$. Then $e^* := \big<(z^*)^{j}, (z^*)^{(j+1)} \big>$ is the edge of $\Delta C^*$ and from (\ref{face=box}), we see that $a \in Q_{e^*}$.
\end{proof}
We give the proof of Proposition \ref{prop:hole_bijection}.
\begin{proof}[Proof of Proposition \ref{prop:hole_bijection}] 
Let $C^*$ be a finite cluster in $(\mathbb{L}^d)^*$. First, by using Lemma \ref{bounded}, we show that any vertices in $C^*$ belong to the same hole. Note that for each boundary edge $e^* \in \Delta C^*$, the corresponding face $Q_{e^*}$ is open.  \par
Take an arbitrary dual vertex $x^* \in C^*$. Let $\gamma$ be a path from $x^*$ to infinity, i.e., a continuous map $\gamma: [0,\infty) \longrightarrow \R^d$ with $\gamma(0) = x^*$ whose image is unbounded. Then, we have
\begin{align*}
t^{\prime} := \inf \{ t : \gamma(t) \notin B_{C^*}\} < \infty
\end{align*}
since $B_{C^*}$ is bounded. \par 
Let us check that $\gamma(t^{\prime}) \in \partial B_{C^*}$. The continuity of $\gamma$ implies $\gamma(t^{\prime}-1/n) \longrightarrow \gamma(t^{\prime})$ as $n \longrightarrow \infty$. Since $\gamma(t^{\prime}-1/n) \in B_{C^*}$ and $B_{C^*}$ is closed, $\gamma(t^{\prime})$ is in $B_{C^*}$. Moreover, for any $\epsilon > 0$, there exists $\delta > 0$ such that 
\begin{align*}
|t-t^{\prime}| < \delta \Longrightarrow \| \gamma(t) - \gamma(t^{\prime})\|_2 < \epsilon.
\end{align*}
From the definition of $t^{\prime}$, there exists $t$ such that $t^{\prime} \leq t < t^{\prime} + \delta$ and $\gamma(t) \notin B_{C^*}$. For this $t$, we see $\| \gamma(t) - \gamma(t^{\prime})\|_2 < \epsilon$ and then, $B(\gamma(t^{\prime}) ; \epsilon) \cap (\R^d \setminus B_{C^*}) \neq \emptyset$. Thus we have $\gamma(t^{\prime}) \in \partial B_{C^*}$. \par 
From Lemma \ref{bounded}, we obtain 
\begin{align*}
\gamma(t^{\prime}) \in \partial B_{C^*} = \bigcup_{e^* \in \Delta C^*} Q_{e^*}.
\end{align*}
This implies that the path $\gamma$ must hit some face in $K(\omega)$, since $Q_{e^*}$ is open for $e^* \in \Delta C^*$. Thus we see that $x^*$ is in a bounded connected component of $\R^d \setminus K(\omega)$, i.e., a hole. From the connectedness of $C^*$ and the fact that open dual bonds and open faces must be disjoint, vertices in $C^*$ must belong to the same hole, say $D_{C^*}$. \par
Next, let us check that $C^* \mapsto D_{C^*}$ is bijective. We construct the invertible map $D \mapsto C^*$.
For a hole $D$, it must have a dual vertex $x^*$. For this $x^*$, the  dual cluster $C^*(x^*) \subset (\mathbb{L}^d)^*$ including $x^*$ is finite, since the existence of an infinite path from $x^*$ contradicts the boundedness of $D$. It is sufficient to check that $C^*(x^*) = C^*(y^*)$ for two vertices $x^*, y^* \in D$. Suppose on the contrary $C^*(x^*) \neq C^*(y^*)$. Since $x^*, y^* \in D$, there exists a path $\gamma: [0,1] \longrightarrow \R^d$ in $D$ such that $\gamma(0) = x^*$ and $\gamma(1) = y^*$. Similarly to the  above discussion, we can show that $\gamma$ must intersect $\partial B_{C^*(x^*)} = \bigcup_{e^* \in \Delta C^*(x^*)} Q_{e^*}$. This means that $\gamma$ must hit some open face, which contradict the condition that $\gamma$ is in $D$.  Clearly, we can see $D = D_{C^*(x^*)}$ for this $x^*$. Therefore, $C^* \mapsto D_{C^*}$ is bijective.\par
Under this bijective correspondence, suppose that two finite dual clusters $C^*_1$, $C^*_2$ share a dual boundary $e^*$. From the above discussion, we see that the corresponding holes $D_{C^*_1}$, $D_{C^*_2}$ are constructed by the faces which correspond to the boundary edges of $C_1$, $C_2$, respectively. Therefore, $\partial D_{C_1}$, $\partial D_{C_2}$ share the face $Q_{e^*}$. This means the adjacency of the holes. Similarly we can see that if two holes are adjacent, the corresponding finite clusters share some boundary edges. 
\end{proof}

We apply Proposition \ref{prop:hole_bijection} to the hole percolation model. \par
Note that by Theorem \ref{th:uniqueness}, the infinite cluster in $(\mathbb{L}^d)^*$ is uniquely determined, say $I \subset (\mathbb{L}^d)^*$ (if there is no infinite cluster, $I \subset (\mathbb{L}^d)^*$ is assumed to be $\emptyset$). 
Let us define the subgraph $(\mathbb{L}^d)^* - I \subset (\mathbb{L}^d)^*$ as follows:
\begin{itemize}
\item{Vertex set: }$\{x^* \in (\Z^d)^*: x^* \text{ does not belong to } I \}$
\item{Edge set: }$\{e^* \in \mathbb{E}^d: e^* \text{ belongs to neither } I \text { nor } \Delta I\}$
\end{itemize}
Namely, $(\mathbb{L}^d)^* - I$ is the graph defined by removing the graph $I$ and its incident edges from $(\mathbb{L}^d)^*$.  \par
Let us observe the shape of a hole graph by using the definition (\ref{regarding_subgraph}), regarding a hole as the subset of $\Z^d$.  
From Theorem \ref{th:uniqueness} and Proposition \ref{prop:hole_bijection}, a dual vertex $x^* \in (\mathbb{L}^d)^* - I$ belongs to a finite dual cluster almost surely, and thus $x^*$ is included in some hole. On the other hand, from Proposition \ref{prop:hole_bijection}, a dual vertex $x^*$ in some hole must in a finite dual cluster, and thus in $x^* \in (\mathbb{L}^d)^* - I$. Therefore, it follows that $x^* \in (\Z^d)^*$ belongs to the hole graph if and only if $x^* \in (\mathbb{L}^d)^* - I$. Roughly speaking, if we ignore the shape of each hole and their adjacency, the ``external appearance" of the hole graph coincides with $(\mathbb{L}^d)^* - I$. \par
In more detail, the following lemma shows the relation between clusters of a hole graph and connected components of $(\mathbb{L}^d)^* - I$. 
\begin{lemm}
\rm
\label{obs:taiou}
Hole clusters bijectively correspond to the connected component of $(\mathbb{L}^d)^* - I$. Moreover, under this correspondence, a hole cluster is infinite if and only if the corresponding connected component is infinite.
\begin{table}[H]
 \centering
  \hspace{-2em}
  \begin{tabular}{c|c}
    Hole graph & $(\mathbb{L}^d)^* - I$\\ \hline \hline
   cluster & connected component  \\ \hline
   infinite cluster & infinite connected component  \\ 
  \end{tabular}
\end{table}
\end{lemm}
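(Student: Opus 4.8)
The plan is to convert the geometric adjacency of holes into graph connectivity on the dual lattice, using Proposition \ref{prop:hole_bijection} as the bridge, and then to package the result as an explicit bijection between hole clusters and the connected components of $(\mathbb{L}^d)^* - I$. Recall that the paragraph preceding the lemma already matches the vertex sets: $x^* \in (\Z^d)^*$ lies in some hole if and only if $x^*$ is a vertex of $(\mathbb{L}^d)^* - I$, and in that case Proposition \ref{prop:hole_bijection} attaches to $x^*$ a unique finite dual cluster $C^*(x^*)$ and a unique hole $D(x^*) = D_{C^*(x^*)}$. First I would record two local facts. (i) The open edges internal to a finite cluster $C^*$ have both endpoints outside $I$, so they belong to $(\mathbb{L}^d)^* - I$; hence each finite cluster is connected inside $(\mathbb{L}^d)^* - I$ and all of its vertices map to the single hole $D_{C^*}$. (ii) For two distinct finite clusters $C_1^*, C_2^*$, Proposition \ref{prop:hole_bijection} says the holes $D_{C_1^*}, D_{C_2^*}$ are adjacent if and only if $\Delta C_1^* \cap \Delta C_2^* \neq \emptyset$; since $C_1^*$ and $C_2^*$ are disjoint, a shared boundary edge necessarily has one endpoint in $C_1^*$ and the other in $C_2^*$, and such an edge again avoids $I$, so it is an edge of $(\mathbb{L}^d)^* - I$. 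Conversely, any edge of $(\mathbb{L}^d)^* - I$ joining $C_1^*$ to $C_2^*$ is a common boundary edge and hence witnesses adjacency.

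Next I would define the candidate bijection $\Phi$ sending a hole cluster $\mathcal{H}$ (a connected component of the hole graph, viewed as a set of holes) to the union $\bigcup_{D \in \mathcal{H}} D \subset (\Z^d)^*$ of its holes regarded as dual-vertex sets. Using the two local facts, connectedness of $\Phi(\mathcal{H})$ in $(\mathbb{L}^d)^* - I$ follows by lifting a hole-path $D_0, D_1, \ldots, D_k$ to a path in $(\mathbb{L}^d)^* - I$: inside each $D_i = D_{C_i^*}$ one moves through $C_i^*$ along open edges, and between $D_i$ and $D_{i+1}$ one crosses the common boundary edge supplied by (ii). The delicate point is maximality: I must rule out that the graph $(\mathbb{L}^d)^* - I$, whose edge set a priori contains closed dual edges, links $\Phi(\mathcal{H})$ to vertices outside it. For this I would take any path $z_0^*, \ldots, z_m^*$ in $(\mathbb{L}^d)^* - I$ starting in $\Phi(\mathcal{H})$; each $z_j^*$ lies in a finite cluster $C_j^*$ and hence in a hole $D(z_j^*)$, and each step either stays in one cluster (same hole) or crosses between two clusters, in which case the traversed edge is a shared boundary edge and fact (ii) forces $D(z_j^*) \backsim D(z_{j+1}^*)$. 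Thus the whole path projects to a hole-path, so $D(z_m^*)$ lies in $\mathcal{H}$ and $z_m^* \in \Phi(\mathcal{H})$; hence $\Phi(\mathcal{H})$ is a full connected component.

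Bijectivity is then immediate: $\Phi$ is injective because distinct holes occupy disjoint sets of dual vertices, so $\Phi(\mathcal{H})$ determines the holes of $\mathcal{H}$; and it is surjective because any connected component $\mathcal{C}$ of $(\mathbb{L}^d)^* - I$ contains some vertex $x^*$, whence $\mathcal{C} = \Phi(\mathcal{H})$ for the hole cluster $\mathcal{H}$ containing $D(x^*)$, two connected components being equal once they share a vertex. Finally, for the cardinality statement I would use that every hole is a finite dual cluster, hence a finite but nonempty set of dual vertices; consequently $\mathcal{H}$ contains infinitely many holes precisely when $\Phi(\mathcal{H})$ contains infinitely many vertices, which is exactly the assertion that a hole cluster is infinite if and only if the corresponding component of $(\mathbb{L}^d)^* - I$ is infinite.

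I expect the main obstacle to be precisely the maximality argument in the second paragraph: one must check that connectivity in the abstract graph $(\mathbb{L}^d)^* - I$ does not create adjacencies between clusters beyond those recorded by hole-adjacency, even though its edge set contains closed dual edges. This is resolved by the endpoint analysis of shared boundary edges together with the disjointness of distinct finite clusters, which guarantees that every cross-cluster edge of $(\mathbb{L}^d)^* - I$ is a common boundary edge and therefore corresponds, via Proposition \ref{prop:hole_bijection}, to a genuine adjacency of holes.
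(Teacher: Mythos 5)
Your proposal is correct and takes essentially the same approach as the paper: both rest on Proposition \ref{prop:hole_bijection}, show that any path in $(\mathbb{L}^d)^* - I$ projects to a hole path (every cross-cluster edge being a shared boundary edge, hence an adjacency of holes) and that hole adjacency conversely lifts to a path in $(\mathbb{L}^d)^* - I$, and then assemble the bijection from this two-way correspondence. The only differences are presentational --- your explicit map $\Phi$ versus the paper's claim (A) followed by a choice of representatives --- plus your spelled-out argument for the infinite-iff-infinite statement (disjointness and finiteness of holes as dual-vertex sets), which the paper leaves implicit.
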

Note that the right hand side of the above table only refers to $(\mathbb{L}^d)^* - I \subset (\mathbb{L}^d)^*$, and does not consider whether the remaining bonds in $(\mathbb{L}^d)^* - I$ is open or not.  Figure 6 shows the relation between $(\mathbb{L}^d)^* - I$ and its induced hole graph. 
 \begin{figure}[H]
  \centering
   \includegraphics[width=13cm]{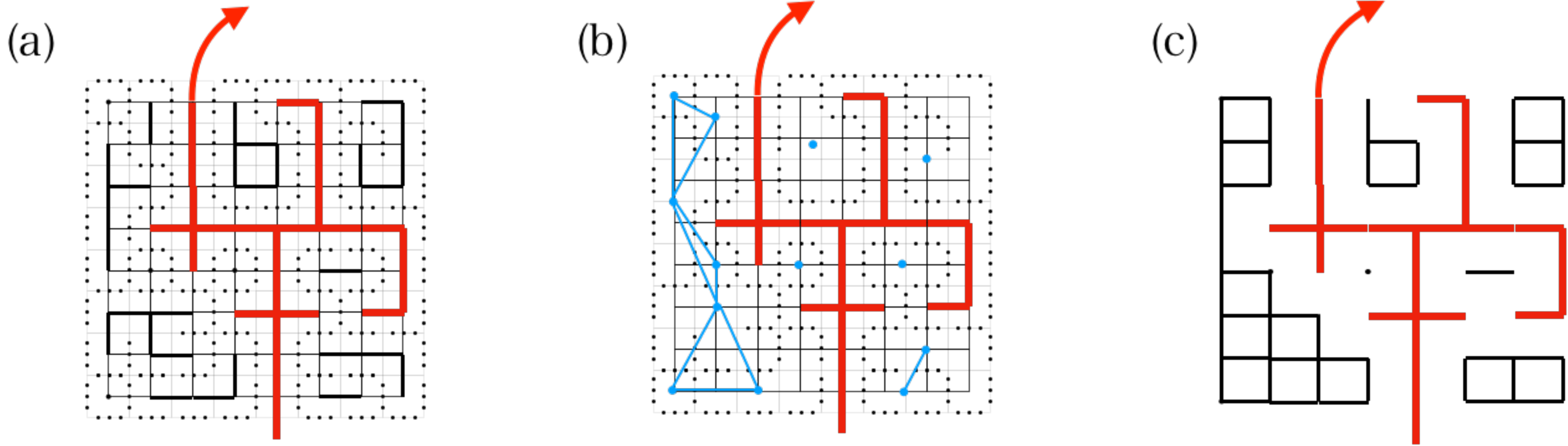}
  \caption{(a) the configuration of dual bonds, (b) the induced hole graph (blue), and (c) $(\mathbb{L}^d)^* - I$. Here $I$ is expressed by the red graph}
 \end{figure}

\begin{proof}
We first prove the following claim.
\begin{itemize}
\item[(A)] Two dual vertices $x^*$, $y^*$ in $(\mathbb{L}^d)^* - I$ belong to a same connected component if and only if the holes $D$, $D^{\prime}$ with $x^* \in D$, $y^* \in D^{\prime}$ belong to a same cluster of the induced hole graph. 
\end{itemize}
For two dual vertices $x^*$, $y^*$ which belong to a same connected component in $(\mathbb{L}^d)^* - I$, there exist holes $D_{x^*}$ and $D_{y^*}$ which include $x^*$ and $y^*$, respectively, because of the uniqueness of the infinite cluster (Theorem \ref{th:uniqueness}). Take a path from $x^*$ to $y^*$ in $(\mathbb{L}^d)^* - I$
\begin{align*}
x^*= x_0^*,e_1^*, x_1^*,e_2^* \ldots ,e_n^*,x_n^*=y^*,
\end{align*}
where each $x_i^*$ is a vertex of $(\mathbb{L}^d)^* - I$. Let $D_{x_i^*}$ be the hole  including  $x_i^*$. Then for $i = 0, 1, \ldots n-1$, we can see $D_{x_i^*} = D_{x_{i+1}^*}$ or $D_{x_i^*} \backsim D_{x_{i+1}^*}$. Indeed, if $D_{x_i^*} \neq D_{x_{i+1}^*}$, then the face $Q_{e^*_{i+1}}$ is open, and it is a common boundary of $D_{x_i^*}$ and $D_{x_{i+1}^*}$.  Choose the different holes along the path, we obtain
\begin{align*}
D_{x^*} \backsim D_{i_1} \backsim  \ldots \backsim D_{i_k} \backsim D_{y^*}.
\end{align*}
This means that $D_{x^*}$ and $D_{y^*}$ belong to the same cluster. \par
Let us show the sufficient condition. Without loss of generality, we can simply assume $D \backsim D^{\prime}$. For two dual vertices $x^*$, $y^*$ with $x^* \in D$, $y^* \in D^{\prime}$, respectively, we can take a path
\begin{align*}
x^* ,e_1^*, \ldots ,e_i^*,e_Q^*,f_1^*, \ldots ,f_j^*,y^*, 
\end{align*}
where $e_1^*, \ldots ,e_i^*$ and $f_1^*, \ldots ,f_j^*$ is the dual bonds in $D$, $D^{\prime}$, respectively, and $e_Q^*$ is the dual bond which corresponds to a common boundary face $Q$ of $D$ and $D^{\prime}$. Clearly, $e_1^*, \ldots ,e_i^*$ and $f_1^*, \ldots ,f_j^*$ is the bonds in $(\mathbb{L}^d)^* - I$. Since both end vertices of $e_Q$ are in $D$, $D^{\prime}$, $e_Q$ also belongs to $(\mathbb{L}^d)^* - I$. Thus $x^*$ and $y^*$ belong to the same connected component. \par
From the claim (A), we can construct the bijection between hole clusters and connected components in $(\mathbb{L}^d)^* - I$.  \par
For a connected component in $(\mathbb{L}^d)^* - I$, let us take its vertex $x^*$. From Theorem \ref{th:uniqueness}, there exists a hole $D_{x^*}$ including $x^*$. We set the corresponding hole cluster as the one including $D_{x^*}$. The claim (A) ensures well-definedness of this correspondence. Conversely, given a hole cluster, take its hole $D$ and a vertex $x^*$ in $D$. We set the corresponding connected component in $(\mathbb{L}^d)^* - I$ as the one including $x^*$. Again the claim (A) also ensures well-definedness, and hence we see that these are inverses of each other. 
\end{proof}

\subsection{Estimates of the critical probability}
In this subsection, we give the proof of Theorem \ref{upper}. First, we introduce the notations which will be used later. Let $X \subset \R^d$ be a cubical set constructed of only faces. We say that $X$ {\it encloses} the subset $V \subset (\Z^d)^*$ if and only if we may choose faces $Q_1,Q_2, \ldots ,Q_k$ of $X$ such that $V$ is included in a bounded domain of $\R^d \setminus (\bigcup_{i=1}^{k} Q_i)$. \par
Intuitively, it may seem to be true that $\face \leq \hole$. Indeed, if there exists an infinite path of holes, we may expect that the faces of the holes also make an infinite path. However, there is a counterexample shown in Figure 5. The key point of the proof for $\face \leq \hole$ is to check that the case like Figure \ref{fig:daunut} cannot influence the value of $\hole$. For this, we give the following lemma, which states that holes do not tend to be large.
\begin{figure}[H] 
   \centering
   \includegraphics[width=6cm]{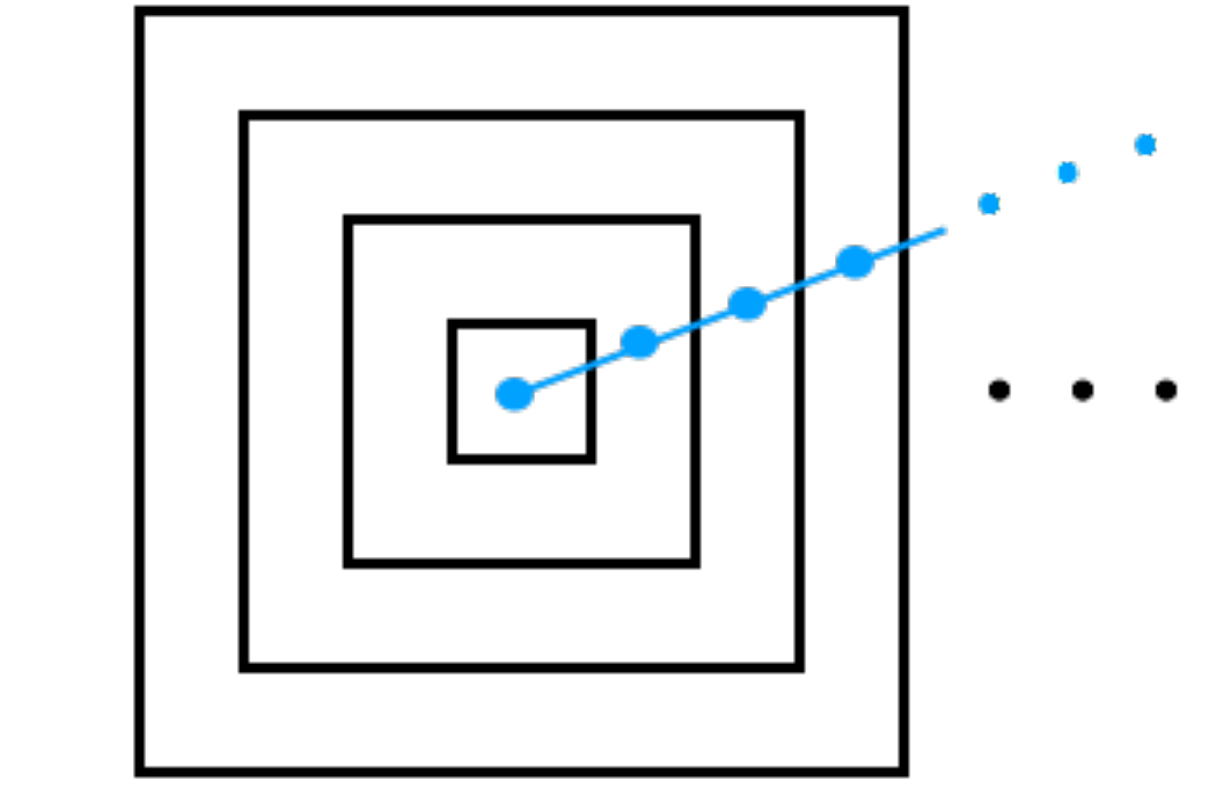} 
   \caption{A cubical set (black) and the induced hole graph (blue). The hole graph is infinitely connected while all face clusters are finite}
   \label{fig:daunut}
\end{figure}

\begin{lemm}
\rm
If $p < 1- p_c^{\operatorname{bond}}(d)$, then
\begin{align*}
P_p( K(\omega) \ \text{encloses} \ \B(n) \}) \longrightarrow 0
\end{align*}
as $n \longrightarrow 0$. 
\label{bighole}
\end{lemm}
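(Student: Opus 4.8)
The plan is to reduce the enclosure event to a statement about the dual bond percolation and then exploit that this dual model is supercritical. Since $p < 1 - \bond(d)$ is equivalent to $1-p > \bond(d)$, the dual bond percolation with parameter $1-p$ induced by (\ref{taiou}) lies in its supercritical phase, so by Theorem \ref{th:uniqueness} there is almost surely a unique infinite dual cluster, which I denote $I$. In particular $P_p(I = \emptyset) = 0$: the existence of an infinite cluster is a translation-invariant (hence $0$--$1$) event, and it has positive probability because $1-p > \bond$ gives $\pbond(1-p) > 0$, so it must have probability one.

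The central inclusion I would establish is
\begin{align*}
\{K(\omega) \text{ encloses } \B(n)\} \subseteq \{\B(n) \cap I = \emptyset\}.
\end{align*}
Suppose $K(\omega)$ encloses $\B(n)$ via open faces $Q_1, \ldots, Q_k$, so that $\B(n)$ lies in a bounded domain $U$ of $\R^d \setminus \bigcup_{i} Q_i$. Because $\bigcup_i Q_i \subseteq K(\omega)$, we have $\R^d \setminus K(\omega) \subseteq \R^d \setminus \bigcup_i Q_i$, and hence the connected component of any $x^* \in \B(n)$ in $\R^d \setminus K(\omega)$ is contained in the single bounded component $U$. Thus $x^*$ lies in a bounded component of $\R^d \setminus K(\omega)$, i.e., a hole, and by Proposition \ref{prop:hole_bijection} its dual cluster is finite, so $x^* \notin I$. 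As this holds for every $x^* \in \B(n)$, the inclusion follows.

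With the inclusion in hand the conclusion is routine. The events $A_n := \{\B(n) \cap I = \emptyset\}$ are decreasing in $n$, and $\bigcap_n A_n = \{I = \emptyset\}$ because $\B(n) \uparrow (\Z^d)^*$. By continuity of measure, $P_p(A_n) \downarrow P_p(I = \emptyset) = 0$, and therefore $P_p(K(\omega) \text{ encloses } \B(n)) \le P_p(A_n) \to 0$ as $n \to \infty$.

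The main obstacle is the geometric step in the second paragraph: one must argue carefully that enclosure by finitely many open faces traps the whole box inside a single bounded component of the complement of the \emph{entire} random set $K(\omega)$, so that Proposition \ref{prop:hole_bijection} can be applied. The key point I would spell out is that enlarging the obstacle set from $\bigcup_i Q_i$ to all of $K(\omega)$ only refines the complement into smaller components and keeps the component of each $x^*$ inside the bounded domain $U$; combined with the hole--finite-cluster correspondence this forces $x^* \notin I$. Everything else is soft probabilistic boilerplate once this containment is pinned down.
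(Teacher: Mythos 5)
Your proof is correct and takes essentially the same route as the paper's: both rest on the observation that $p < 1-\bond(d)$ makes the dual bond percolation supercritical, so an infinite dual cluster exists almost surely, its vertices cannot lie in an enclosed box (your inclusion via Proposition \ref{prop:hole_bijection} is precisely the geometric fact the paper invokes), and continuity of measure along decreasing events finishes the argument. The only cosmetic difference is that you invoke the Burton--Keane uniqueness theorem, where mere existence of an infinite dual cluster (a translation-invariant event of positive probability, hence probability one) suffices, as in the paper.
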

\begin{proof}
Suppose $p < 1- p_c^{\operatorname{bond}}(d)$. Since $1 - p > p_c^{\operatorname{bond}}(d)$, there exists an infinite dual cluster almost surely. Fix a dual vertex $x^* \in (\mathbb{L}^d)^*$ of the infinite dual cluster. For sufficiently large $N \in \mathbb{N}$, we find $x^* \in \B(N)$ and  $K(\omega)$ does not enclose $\B(N)$ (see Figure \ref{fig:surround}).
\begin{figure}[H]
  \centering
  \includegraphics[width=6cm]{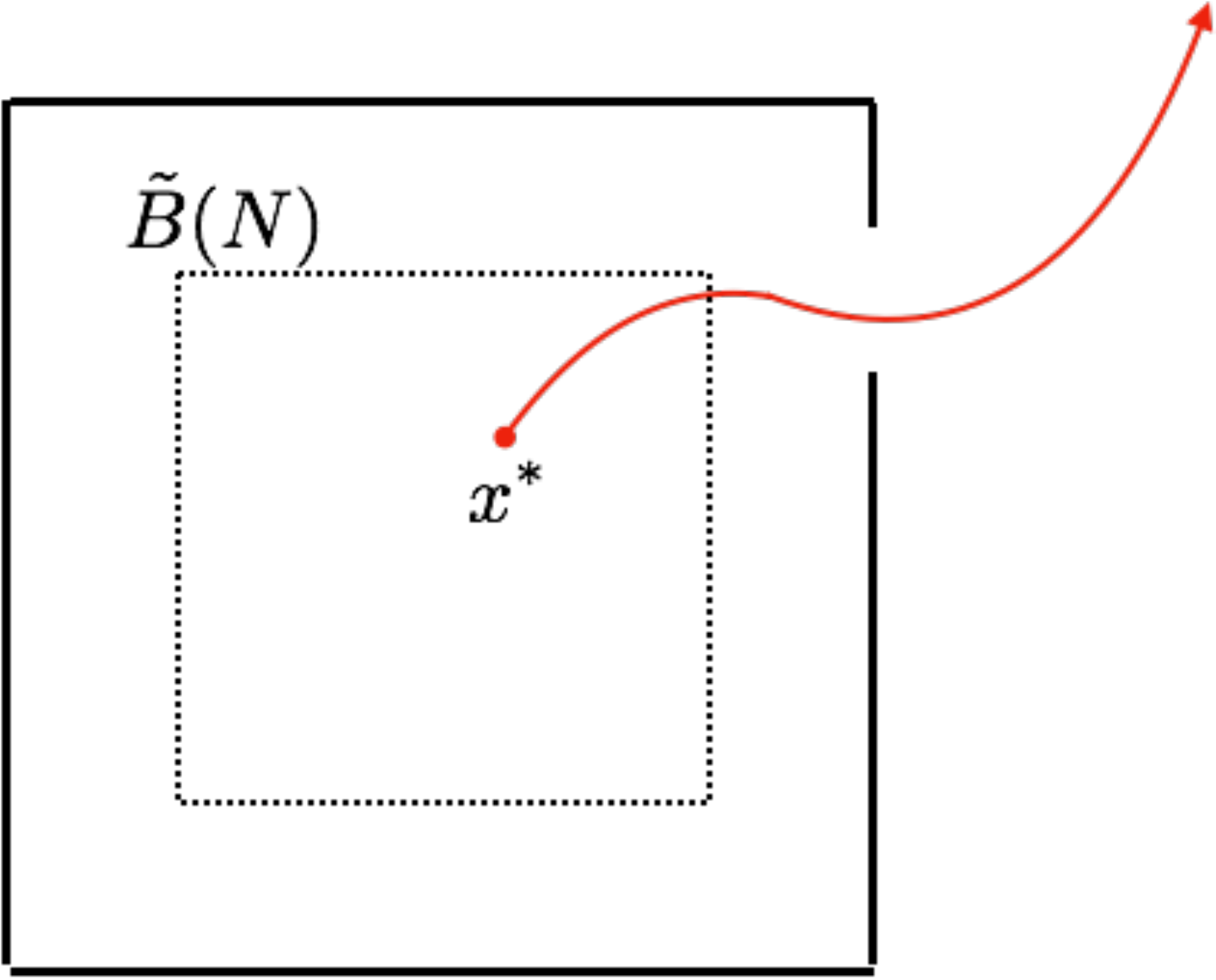}
  \caption{The infinite cluster in the dual lattice (red), $\B(N)$ (dotted), and faces in $K(\omega)$ (thick)}
   \label{fig:surround}
\end{figure}
Thus, we obtain
\begin{align*}
P_p(\bigcap_{n \in \mathbb{N} } \{ K(\omega) \ \text{encloses} \ \B(n) \}) = 0,
\end{align*}
which implies the statement of the lemma.
\end{proof}
Next, we prove Theorem \ref{upper}. 
\begin{proof}[Proof of Theorem \ref{upper}]
The upper bound follows from Lemma \ref{obs:taiou}. Indeed, suppose $1- \bond < p$. Then $1-p < \bond$ and that implies there exist no infinite dual clusters almost surely. This means that the hole graph consists of one infinite cluster spreading in $\R^d$．\par
We show the lower bound by using Lemma \ref{bighole}. Suppose that a sample $\omega \in \Omega$ satisfies $|G_{0^*}(\omega)|=\infty$. Then the following occurs: 
\begin{align*}
&\mbox{For any $M \in \mathbb{N}$, there exists a face cluster $C$ such that } \\
&(1)\, |C| \geq M \ \ (2)\, \mbox{$C$ encloses $0^*$}
\end{align*}
Let us denote this event by $A$. Then it suffices to show that $P_p(A)=0$ for $p < \face$. For $n \geq 0$ and $m \geq 1$, we denote by $A_{m,n}$ the event that
\begin{align*}
 A_{m,n}:= \{ &\mbox{there exists a face cluster $C$ such that} \\ 
                     &(1)\, |C| \geq m \ (2)\, \mbox{$C$ encloses $\B(n)$ and $0^*$} \}.
\end{align*}
Then $A_{m,n}$ is nonincreasing with respect to $n$, $m$, respectively. Since $\bigcap_m A_{m,0} = A$, it suffices to show that
\begin{align}
\lim_{m \to \infty}P_p(A_{m,0}) = 0.
\label{lim_lower}
\end{align} 
\par
Fix arbitrary $\epsilon > 0$ and suppose $p < \face(d)$. Then, we have
\begin{align*}
1-p > 1 - \face(d) \geq 1/2 \geq \bond(d),
\end{align*}
where the second inequality follows from Proposition \ref{face_upper} and $\face(2) = \frac{1}{2}$, and the last inequality follows from Remark \ref{bond_dimension}. From Lemma \ref{bighole}, we can take an integer $N$ independently from $m$ such that
\begin{align*}
P_p(A_{m,N}) \leq P_p  \big( \{K(\omega) \mbox{ encloses $\B(N)$}\} \big)  < \epsilon.
\end{align*}
For this $N$, if the event $A_{m,0} \setminus A_{m,N}$ occurs, then there exists a face cluster $C$ intersecting $\Lambda^N = [-N, N]^d$ such that $|C| \geq m$. Therefore we obtain 
\begin{align*}
P_p(A_{m,0} \setminus A_{m, N}) &\leq \sum_{Q \subset \Lambda^N: \text{face} } P_p(|C(Q)| \geq m) \\
					    &= |\Lambda^N |  P_p(|C(Q)| \geq m).
\end{align*}
Here, we denote by $|\Lambda^N |$ the number of faces in $\Lambda^N$. Since $p < \face$, the last expression converges to $0$ as $m \longrightarrow 0$. Thus, for large enough $m$, we obtain
\begin{align*}
P_p(A_{m,0}\setminus A_{m,N}) < \epsilon
\end{align*}
and thus, we obtain
\begin{align*}
P_p(A_{m,0}) \leq P_p(A_{m,N}) + P_p(A_{m,0} \setminus A_{m,N}) < 2 \epsilon, 
\end{align*}
which completes the proof of (\ref{lim_lower}).
\end{proof}

\begin{rem}
\rm
\label{behave_of_theta2}
Suppose $d=2$. Since $\hole = 1/2$ (see Remark \ref{exact_1/2}), $\phole(p) = 0$ for $p < 1/2$.
Moreover, from the proof of Theorem \ref{upper}, we may have $\phole(p) = 1$ whenever $\pbond(1-p) = 0$.  From Remark \ref{rem:Kesten}, we see $\pbond(p) = 0$ for $p \leq 1/2$. Thus, $\phole(p)$ is determined as follows:
\begin{equation*}
\phole(p) = \begin{cases}
0, &  \text{if } p < 1/2, \\
1,  & \text{if } p \geq 1/2.
\end{cases}
\end{equation*}
\end{rem}
Furthermore, from the proof of Theorem \ref{upper}, we also see that for $p \geq 1/2$, the hole graph consists of one infinite cluster, spreading in $\R^d$

\section{Uniqueness of the infinite hole cluster}
We give the proof of Theorem \ref{unique} in this section. Note that from Remark \ref{ergodic}, the number of infinite clusters is the function of $p$. Clearly, when $p = 1$, the hole graph consists of one infinite cluster, and $\phole(p) = 0$ when $p = 0$. Let us suppose $0<p<1$ in this section. \par
Let us first study the case $d=2$. From Remark \ref{behave_of_theta2}, the uniqueness holds for $p \geq 1/2$. For $p < 1/2$, we have already seen $\phole(p) = 0$. Thus, Theorem \ref{unique} holds for $d= 2$. \par
In this section, we prove Theorem \ref{unique} for $d \geq 3$.
\par
Let the random variable $M_n$ be the number of infinite hole clusters intersecting $\B(n)$. Let the random variable $N_n(1)$ (resp.$N_n(0)$) be the number of infinite hole clusters when the faces in $\Lambda^n$ are set to be open (resp. closed). First, we show that the number $N_{\infty}$ of infinite hole clusters  should be $0,1$ or $\infty$.
\begin{lemm} 
\rm
\label{finite_case}
If $1 \leq k < \infty$, then $P_p(N_{\infty} = k) = 1$ implies $k=1$. 
\end{lemm}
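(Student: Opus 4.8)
The plan is to adapt the Burton--Keane ``finite modification'' argument (the one behind Theorem~\ref{th:uniqueness}) to the hole graph. By the ergodicity recorded in Remark~\ref{ergodic}, $N_\infty$ equals some constant $k$ almost surely, so it suffices to rule out $2 \le k < \infty$. Suppose for contradiction that $P_p(N_\infty = k) = 1$ with $k \ge 2$. Since every infinite hole cluster eventually meets a large box, $M_n \uparrow N_\infty = k$ almost surely, and hence I may fix an $n$ so large that $P_p(M_n \ge 2) > 0$; on this event at least two infinite hole clusters intersect $\B(n)$.

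The geometric heart of the argument is to compare $\omega$ with the configuration $\omega^{(n)}$ obtained by forcing every face in $\Lambda^n$ to be open, whose number of infinite hole clusters is exactly $N_n(1)$. Because $\omega^{(n)} \ge \omega$ in the partial order, Remark~\ref{increasing_event} shows that hole-connectivity can only increase: any two dual vertices lying in a common hole cluster of $\omega$ still lie in a common hole cluster of $\omega^{(n)}$, so no cluster of $\omega$ is broken apart. Moreover, once all faces in $\Lambda^n$ are open, each interior cell $B_{x^*}$ with $x^* \in \B(n)$ is a bounded component of $\R^d \setminus K(\omega^{(n)})$, i.e.\ a hole, and two neighboring such cells share an open face and are therefore adjacent; thus all of these cells form a single hole cluster $\mathcal C$. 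Consequently, every infinite hole cluster of $\omega$ meeting $\B(n)$ contains one of these interior cells and is joined through $\mathcal C$ to each of the others, so the $M_n$ infinite clusters meeting $\B(n)$ merge into one in $\omega^{(n)}$. Finally, opening the finitely many faces of $\Lambda^n$ creates only finitely many new holes and finitely many new adjacencies and so cannot assemble a new infinite cluster out of finite ones; together with monotonicity this forces every infinite cluster of $\omega^{(n)}$ to contain an infinite cluster of $\omega$. Counting then gives $N_n(1) = 1 + (k - M_n) \le k-1$ on the event $\{M_n \ge 2\}$.

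It remains to transfer this bound back to $N_\infty$ by an independence (finite-energy) step. The variable $N_n(1)$ depends only on the faces outside $\Lambda^n$, whereas the event $E_1 = \{\text{all faces in } \Lambda^n \text{ are open}\}$ depends only on the faces inside $\Lambda^n$ and satisfies $P_p(E_1) > 0$; these two are therefore independent. On $E_1$ one has $\omega = \omega^{(n)}$ and hence $N_\infty = N_n(1)$, so using the previous paragraph, $P_p(N_\infty \le k-1) \ge P_p(E_1)\,P_p(N_n(1) \le k-1) \ge P_p(E_1)\,P_p(M_n \ge 2) > 0$, which contradicts $N_\infty = k$ almost surely. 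Therefore $k = 1$. I expect the main obstacle to be precisely the geometric merging step of the second paragraph: unlike ordinary bond percolation, where opening a box visibly fuses all clusters touching it, here I must argue through the monotonicity of hole-connectivity (Remark~\ref{increasing_event}), together with the dual description of Lemma~\ref{obs:taiou}, both to guarantee that the clusters meeting $\B(n)$ genuinely merge and to rule out any spurious infinite hole cluster produced by the modification.
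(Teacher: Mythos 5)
Your proof is correct and takes essentially the same route as the paper: both arguments are the standard finite-modification (Newman--Schulman/Burton--Keane) scheme, resting on the independence of $N_n(1)$ from the configuration inside $\Lambda^n$, the positive probability of the all-open box event, and the geometric fact that opening every face in $\Lambda^n$ fuses all infinite hole clusters meeting $\B(n)$ into one without creating new infinite clusters from finite ones. The paper packages this by showing $N_n(1)=N_n(0)=k$ a.s.\ and deducing $M_n\le 1$ a.s.\ before letting $n\to\infty$, while you run a direct contradiction using $N_n(1)$ alone (and give a somewhat more explicit justification of the merging step via the unit cells $B_{x^*}$); this is only a cosmetic difference.
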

\begin{proof}
Clearly, when $p = 1$, the hole graph consists of one infinite cluster and we find $k = 1$. When $p = 0$, the hole graph is $\emptyset$.  We suppose $0<p<1$ in this proof.
Suppose $P_p(N_{\infty} = k) = 1$. We have
\begin{align*}
1 = P_p(N_{\infty} = k) = \sum_{A_n}P_p(N_{\infty} = k \ | \  A_n) P_p(A_n), 
\end{align*}
where $A_n$ is a cylinder set determined by the configuration of all faces in $\Lambda^n$. The right hand side is the sum over all the configurations of faces in $\Lambda^n$. Since the number $|\Lambda^n|$ of faces in $\Lambda^n$ is finite and $0 < p < 1$, $P_p(A_n)$ is strictly positive．Thus for any $A_n$, $P_p(N_{\infty} = k \ | \ A_n)$ must be equal to $1$. In particular, when $A_n$ is the case that ``all faces in $\Lambda^n$ are open", we obtain
\begin{align*}
1 &= P_p(N_{\infty} = k \ | \  A_n) \\
   &= P_p(N_{\infty} = k \mbox{ and } \,A_n ) / P_p(A_n) \\
   &= P_p(N_n(1) = k \mbox{ and } \,A_n ) / P_p(A_n) \\
   &= P_p(N_n(1) = k),
\end{align*}
where we used the independence of $N_n(1)$ and $A_n$. Similarly, we have
\begin{align*}
   1= P_p(N_n(0) = k),
\end{align*}
and thus,
\begin{align*}
   1= P_p(N_n(1) = N_n(0) = k).
\end{align*}
We may see that $N_n(1) = N_n(0) < \infty$ implies $M_n \leq 1$, where we use the assumption $k < \infty$. Indeed, if on the contrary $M_n \geq 2$, the number of infinite hole clusters must decrease by opening all the faces in $\Lambda^n$, which contradicts $N_n(1) = N_n(0) < \infty$. From the relation
\begin{align*}
\{ N_{\infty} \leq 1\}  = \{\forall n: M_n \leq 1\} = \bigcap_n \{ M_n \leq 1 \}
\end{align*}
and continuity of measures, we obtain
\begin{align*}
1= P_p(M_n \leq 1)\longrightarrow P_p(N_{\infty} \leq 1)
\end{align*}
as $n \longrightarrow \infty$. This implies $P_p(N_{\infty} \leq 1) = 1$ and completes the proof of Lemma \ref{finite_case}.
\end{proof}
From Lemma \ref{finite_case}, $P_p(N_{\infty} = k) = 0$ for $2 \leq k < \infty$. Thus it is sufficient for Theorem \ref{unique} to show that infinite hole cluster is not infinite. To this aim, we need two more lemmas. The first one is also used to show the uniqueness of infinite bond clusters (for the proof, see \cite[Lemma 8.5]{Grimmett}). \par
Let us give the notations for the lemma. For a set $Y$ with $|Y| \geq 3$, a {\it 3-partition} $\Pi = \{ \Pi_1,\Pi_2,\Pi_3\}$ of $Y$ is a partition of $Y$ into exactly three non-empty sets $\Pi_1, \Pi_2, \Pi_3$. We say that two 3-partitions $\Pi = \{ \Pi_1,\Pi_2,\Pi_3\}$ and $\Pi^{\prime}= \{ \Pi_1^{\prime},\Pi_2^{\prime},\Pi_3^{\prime} \}$ are {\it compatible} if there exists an ordering of their elements such that  $\Pi_1 \sqcup \Pi_2 \subset \Pi_3^{\prime}$.  The lemma is expressed as follows.
\begin{lemm}
\rm
\label{lemm:compatible}
Let $Y$ be a set with $|Y| \geq 3$, and ${\mathcal P}$ be a set of 3-partitions of $Y$. If any two 3-partitions in ${\mathcal P}$ are compatible, then $|{\mathcal P}| \leq |Y| - 2$. 
\end{lemm}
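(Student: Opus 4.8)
The plan is to prove the bound by induction on $|Y|$. The base case $|Y| = 3$ is immediate: the only $3$-partition of a three-element set is into three singletons, so $|\mathcal{P}| \le 1 = |Y| - 2$. For the inductive step I would assume $\mathcal{P} \neq \emptyset$, fix a single partition $\Pi = \{\Pi_1, \Pi_2, \Pi_3\} \in \mathcal{P}$, and use it as a pivot against which every other member of $\mathcal{P}$ is classified and then reduced to a smaller ground set.

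The classification goes as follows. By compatibility, for each $\Pi' \in \mathcal{P} \setminus \{\Pi\}$ there is an index $i \in \{1,2,3\}$ (with $\{i,j,k\} = \{1,2,3\}$) such that the union $\Pi_j \sqcup \Pi_k$ is contained in a single part of $\Pi'$; I would pick one such $i$ and place $\Pi'$ into a class $\mathcal{C}_i$, obtaining a disjoint decomposition $\mathcal{P} \setminus \{\Pi\} = \mathcal{C}_1 \sqcup \mathcal{C}_2 \sqcup \mathcal{C}_3$. The point of this bookkeeping is that every $\Pi' \in \mathcal{C}_i$ has its two \emph{other} parts contained entirely in $\Pi_i$. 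I would then collapse the set $\Pi_j \cup \Pi_k$ to a single new point $*$, producing a ground set $Y_i := \Pi_i \sqcup \{*\}$ with $|Y_i| = |\Pi_i| + 1 < |Y|$, and push each $\Pi' \in \mathcal{C}_i$ forward to a $3$-partition of $Y_i$ (the part containing $\Pi_j \cup \Pi_k$ becomes the part containing $*$, while the other two parts are unchanged). The induction hypothesis applied to $Y_i$ would then give $|\mathcal{C}_i| \le |\Pi_i| + 1 - 2 = |\Pi_i| - 1$, and summing over $i$ yields
\begin{align*}
|\mathcal{P}| - 1 = \sum_{i=1}^{3} |\mathcal{C}_i| \le \sum_{i=1}^{3} (|\Pi_i| - 1) = |Y| - 3,
\end{align*}
which is exactly the desired $|\mathcal{P}| \le |Y| - 2$.

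The technical heart, and the step I expect to require the most care, is justifying that the collapsing map behaves well. Two things must be verified. First, that the pushed-forward objects are genuine $3$-partitions of $Y_i$ and that pairwise compatibility within $\mathcal{C}_i$ is preserved: this relies on the fact that $\Pi_j \cup \Pi_k$ sits inside a single part of each $\Pi' \in \mathcal{C}_i$, so of any two parts witnessing a containment in $Y$, at most one can meet the collapsed set, and hence they remain distinct parts after collapsing. Second, that the map $\mathcal{C}_i \to \{3\text{-partitions of } Y_i\}$ is injective, which holds because each $\Pi' \in \mathcal{C}_i$ contains \emph{all} of $\Pi_j \cup \Pi_k$ in one part, so $\Pi'$ is recovered from its image by restoring $\Pi_j \cup \Pi_k$ to the part containing $*$. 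I would also dispose of the degenerate case $|\Pi_i| = 1$ directly: then $\mathcal{C}_i = \emptyset$, since two disjoint nonempty parts cannot fit inside a singleton, so the induction hypothesis is never invoked there and the bound $|\mathcal{C}_i| \le |\Pi_i| - 1 = 0$ holds trivially.
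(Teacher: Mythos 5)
Your proof is correct, and it is worth noting that the paper never actually proves this lemma: it is imported from Grimmett's book (cited there as Lemma 8.5) as the combinatorial engine of the Burton--Keane uniqueness argument, so your argument is a self-contained replacement for a citation rather than a variant of anything written in the paper. The induction is sound at every point that matters. Compatibility of $\Pi'$ with the pivot $\Pi$ says exactly that $\Pi_j \sqcup \Pi_k$ lies inside one part of $\Pi'$ for some pair $j \neq k$, and taking complements then forces the other two parts of $\Pi'$ inside $\Pi_i$, so the classes $\mathcal{C}_1, \mathcal{C}_2, \mathcal{C}_3$ are well defined (after an arbitrary choice when several indices work). The collapse map $q_i : Y \to Y_i$ sends the three parts of each $\Pi' \in \mathcal{C}_i$ to three disjoint nonempty sets, since only the part containing $\Pi_j \cup \Pi_k$ meets the collapsed set and its image is the unique part containing $*$; hence the images are genuine $3$-partitions, the map on $\mathcal{C}_i$ is injective, and any witnessing inclusion $B_1' \sqcup B_2' \subset B''$ for a pair in $\mathcal{C}_i$ passes directly to its $q_i$-image, which preserves pairwise compatibility with no case analysis needed. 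Your separate treatment of $|\Pi_i| = 1$ is exactly the right care, since the inductive hypothesis requires a ground set of size at least $3$, and the count $\sum_i (|\Pi_i| - 1) = |Y| - 3$ closes the induction. For comparison, the proofs available in the literature are either an induction in the same spirit as yours or a tree encoding: pairwise compatibility lets one realize $\mathcal{P}$ as the set of branch vertices of degree at least $3$ in a tree whose leaves are the elements of $Y$, and such a tree has at most $|Y| - 2$ branch vertices by degree counting. Your quotient formulation buys a purely set-theoretic argument with no auxiliary structure, at the cost of the bookkeeping you flagged (well-definedness of the classes, injectivity, and compatibility of the pushed-forward family), all of which you handled correctly.
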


Before giving the second lemma, we again set the notations. Let us say that $x^* \in (\mathbb{L}^d)^*$ is a {\it trifurcation} if: 
\begin{enumerate}
\item there exists a hole which includes only $x^*$, say $D_{x^*}$,
\item$D_{x^*}$ belongs to an infinite hole cluster $I$, and
\item the graph $I-D_{x^*}$ obtained by deleting the vertex $D_{x^*}$ and its incident edges from $I$ consists of   exactly three infinite clusters. 
\end{enumerate}
We denote by $T_{x^*}$ the event that $x^* \in (\Z^d)^*$ is a trifurcation. The second lemma is as follows.

\begin{lemm}
\rm
\label{lem:trifurcation=0}
Assume $d \geq 2$, then $P_p(T_{0^*})  = 0$.
\end{lemm}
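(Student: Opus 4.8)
The plan is to adapt the Burton--Keane argument \cite{uniqueness} to the hole percolation model, comparing the expected number of trifurcations in a box (which scales with the volume) against a deterministic bound forced by the combinatorial Lemma \ref{lemm:compatible} (which scales only with the surface area). Since the event $T_{x^*}$ is translation invariant, the ergodicity recorded in Remark \ref{ergodic} shows that $P_p(T_{x^*})$ does not depend on $x^*$. Writing $W(n)$ for the number of trifurcations $x^* \in \B(n)$ and using linearity of expectation, I would first note the identity
\[
E_p(W(n)) = |\B(n)| \, P_p(T_{0^*}),
\]
where $|\B(n)| = (2n)^d$. It therefore suffices to prove a deterministic bound $W(n) \le C n^{d-1}$ with $C$ independent of the configuration, since then $P_p(T_{0^*}) = E_p(W(n))/|\B(n)| \le C n^{d-1}/|\B(n)| \longrightarrow 0$ as $n \to \infty$, which forces $P_p(T_{0^*}) = 0$ because the left-hand side does not depend on $n$.

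The core of the argument is this surface bound, which I would obtain by assigning to each trifurcation a $3$-partition of a boundary set and invoking Lemma \ref{lemm:compatible}. Using Lemma \ref{obs:taiou}, the infinite hole clusters are exactly the infinite connected components of $(\mathbb{L}^d)^* - I$, where $I$ is the almost surely unique infinite dual cluster (Theorem \ref{th:uniqueness}); since $D_{x^*}$ is a singleton hole it is identified with a single vertex $x^*$ of $(\mathbb{L}^d)^* - I$, and condition (3) says that deleting $x^*$ splits its infinite component into exactly three infinite pieces. For a fixed infinite cluster $c$ meeting $\B(n)$, let $Y_c \subset \partial \B(n)$ be the boundary vertices lying in $c$. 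Because every hole is bounded, each of the three infinite branches produced by a trifurcation of $c$ contains vertices arbitrarily far from the origin and hence meets $\partial \B(n)$, so the branches induce a genuine $3$-partition of $Y_c$. Two trifurcations $x^*, z^*$ of the same cluster then yield compatible partitions: since $z^*$ lies in exactly one of the three branches emanating from $x^*$, the boundary vertices reached through the other two branches of $x^*$ all lie in the single branch of $z^*$ that contains $x^*$, which is precisely the relation $\Pi_1 \sqcup \Pi_2 \subset \Pi_3^{\prime}$. Lemma \ref{lemm:compatible} then gives at most $|Y_c| - 2$ trifurcations in $c$, and summing over the pairwise disjoint sets $Y_c \subset \partial \B(n)$ yields $W(n) \le \sum_c |Y_c| \le |\partial \B(n)| = O(n^{d-1})$, as required.

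I expect the main obstacle to lie in the geometric bookkeeping of the second paragraph rather than in the probabilistic shell. Concretely, one must check carefully that (i) deleting the singleton hole $D_{x^*}$ in the hole graph really corresponds to deleting the single vertex $x^*$ in $(\mathbb{L}^d)^* - I$, so that the ``three infinite branches'' of the trifurcation are honest infinite connected components that can feed the partition argument; and (ii) each such branch, being an infinite union of \emph{bounded} holes, necessarily crosses $\partial \B(n)$ and contributes a nonempty class to the partition of $Y_c$. Point (ii) is exactly where the distinctive feature of the hole model---that holes are bounded regions rather than lattice vertices---must be used, and making the ``reaches the boundary'' step precise (for instance via a path in $(\mathbb{L}^d)^* - I$ leaving the box, or via the enclosure estimate of Lemma \ref{bighole}) is the step I would treat most carefully. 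This unconditional vanishing is then what will contradict the hypothesis $N_{\infty} = \infty$, completing the uniqueness proof.
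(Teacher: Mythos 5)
Your proposal follows essentially the same route as the paper: the Burton--Keane argument comparing the volume-order quantity $|\B(n)|\,P_p(T_{0^*})$, obtained from translation invariance and linearity of expectation, against a deterministic surface-order bound obtained by assigning to each trifurcation a 3-partition, checking pairwise compatibility exactly as the paper does (your inclusion $K_2 \cup K_3 \subset K_1'$ is the mirror image of the paper's $K_2' \cup K_3' \subset K_1$), and invoking Lemma \ref{lemm:compatible}; your detour through $(\mathbb{L}^d)^* - I$ via Lemma \ref{obs:taiou} is only a cosmetic repackaging of the paper's direct work with the hole graph. The one genuine, though easily repaired, flaw is your choice of $Y_c \subset \partial \B(n)$ for trifurcations ranging over all of $\B(n)$: a trifurcation $x^*$ may itself lie on $\partial \B(n)$, in which case it belongs to $Y_c$ but to no branch, and a branch attached to such an $x^*$ from outside the box need never meet $\partial \B(n)$ at all, so the branch traces need not form a genuine 3-partition of $Y_c$. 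The paper sidesteps precisely this by partitioning $K \cap \partial \B(n+1)$ while counting trifurcations only in $\B(n)$: then $x^*$ never lies on the partitioned layer, and each branch contains a neighbour of $x^*$ inside $\B(n+1)$ and is unbounded, hence must cross $\partial \B(n+1)$; with this one-layer shift your argument goes through verbatim.
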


\begin{proof}
Let $K \subset G$ be a cluster of the hole graph $G$. Assume that $x^* \in K \cap \B(n)$ is a trifurcation. Then $K$ is infinite and the deleted graph $K - D_{x^*}$ consists of exactly three infinite clusters, say $K_1, K_2, K_3$. Then ${x^*}$ induces a 3-partition $\Pi(x^*) := \{ K_i \cap \partial \B(n+1) : i = 1,2,3 \}$ of $K \cap \partial \B(n+1)$. Moreover, for two trifurcation $x^* , y^* \in K \cap \B(n)$, we show that $\Pi (x^*)$ and $\Pi (y^*)$ are compatible (see Figure \ref{compatible}).  We set 
\begin{align*}
\Pi(x^*) := \{ K_i \cap \partial \B(n+1) : i = 1,2,3 \}, \\
\Pi(y^*) := \{ K_i^{\prime} \cap \partial \B(n+1) : i = 1,2,3 \}, 
\end{align*}
respectively. Without loss of generality, we may assume $K_1$, $ K_1^{\prime}$ includes $y^*, x^*$, respectively. It is sufficient to see that
\begin{align*}
[K_2^{\prime} \cap \partial \B(n+1)] \cup [K_3^{\prime} \cap \partial \B(n+1)] \subset K_1 \cap \partial \B(n+1), 
\end{align*}
which can be reduced to the following relation as the graphs
\begin{align*}
K_2^{\prime} \cup K_3^{\prime} \subset K_1.
\end{align*}
From the definition of a trifurcation, the graph $K_2^{\prime} \cup K_3^{\prime} \cup D_{y^*}$ is an infinite cluster, which does not include $D_{x^*}$. From the setting of $\Pi(x^*)$, it must be included in one of $K_i \ (i = 1,2,3)$.  Since it includes $D_{y^*}$, we can see $K_2^{\prime} \cup K_3^{\prime} \cup D_{y^*} \subset K_1$. 

\begin{figure}[H]
\centering
\includegraphics[width=11cm]{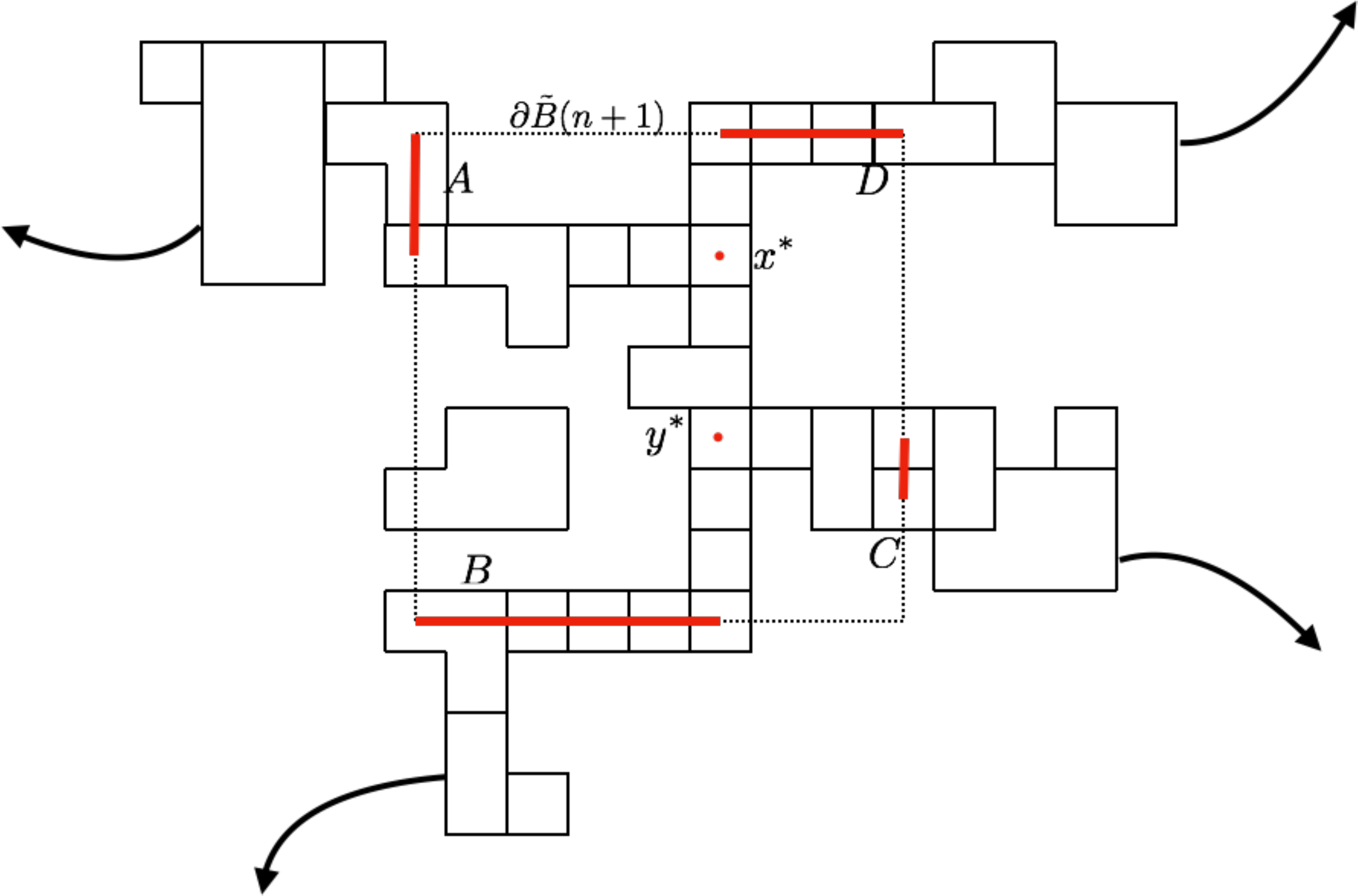}
\caption{$\Pi (x^*)=\{A,D,(B,C)) \}$ and $\Pi (y^*)=\{(A,D),B,C \}$}
\label{compatible}
\end{figure}
Here, we can see
\begin{align*}
    \# \{ x^* \in K \cap \B(n) : \mbox{trifurcation} \} = \# \{ \Pi(x^*) :  x^* \in K \cap \B(n) :  \mbox{trifurcation}\}.
\end{align*}
By using Lemma $\ref{lemm:compatible}$, the right hand side is bounded above by
 $|K \cap \partial \B(n+1)|$.
We take the sum over all  clusters $K \subset G$ intersecting $\B(n+1)$, to find that 
\begin{align*}
\# \{ x^* \in \B(n) : \mbox{trifurcation} \}  \leq  |\partial \B(n+1)|,
\end{align*}
which can be written as
\begin{align*}
\sum_{x^* \in  \B(n)} I_{T_{x^*}}   \leq  |\partial \B(n+1)|. 
\end{align*}
Then we take the expectation to find that
\begin{align*}
 |\B(n) |P_p(T_{0^*})   \leq  |\partial \B(n+1)|.
\end{align*}
By letting $n \longrightarrow \infty$, this gives us $P_p(T_{0^*})=0$. 
\end{proof}
By using Lemma \ref{lem:trifurcation=0}, we give the proof of Theorem \ref{unique}. 
\begin{proof}[Proof of Theorem \ref{unique}]
From Lemma \ref{lem:trifurcation=0}, it is sufficient to show that $P_p(N_{\infty} = \infty)= 1$ implies $P_p(T_{0^*})>0$.  \par
Since $P_p(M_n \geq 3) \longrightarrow P_p(N_{\infty} \geq 3)= 1$ as $n \longrightarrow \infty$, there exists $n$ such that 
\begin{align*}
P_p(M_n \geq 3) \geq 1/2.
\end{align*}
Suppose $M_n \geq 3$. We now show that we can make $0^*$ a trifurcation by changing the configuration of faces in $\Lambda^n$ properly (see Figure \ref{fig:irekae}). \par
First, we can take three dual vertices $a_i^* \ (i = 1, 2, 3) \in \B(n+1)$ satisfying the following conditions: 
\begin{itemize}
\item[(a)] $a_i^*$'s are included in distinct infinite hole clusters, and
\item[(b)] each $a_i^*$ is adjacent to $\B(n)$. 
\end{itemize}
From condition (a), we can see $a_i^*$ and $a_j^*$ are not adjacent for $i \neq j$ (if not, they must be in the same hole cluster). We also take three paths $\pi_i$ of dual lattices such that
\begin{itemize}
\item[(c)] each $\pi_i$ connects between $0^*$ and $a_i^*$ in $\B(n)$ ($i = 1, 2, 3$), and
\item[(d)] they repel each other, i.e., for any $i \neq j$, $x^* \in \pi_i \setminus 0^*$ and $y^* \in \pi_j \setminus 0^*$ are not adjacent. 
\end{itemize}
Denote by $I_i \ (i = 1, 2, 3)$ the infinite hole cluster including $a_i^*$. We change the configuration of faces in $\Lambda^n$ as follows: 
\begin{itemize}
\item[(i)] For each vertex of $\pi_i$, all nearest faces are open,
\item[(ii)] the faces in the boundary of $\Lambda^n$ are open whenever they are included in $I_i \ (i = 1, 2, 3)$, and
\item[(iii)] other faces are all closed.
\end{itemize}
Then $0^*$ becomes a trifurcation. Indeed, we can see that $0^*$ satisfies conditions 1 and 2 of the trifurcation. Let us check the third condition. Now, in $\Lambda^n$, $I_i$'s are connected only at $D_{0^*}$. On the outside of $\Lambda^n$, from the assumption (a) of $a_i^*$, there are no holes connecting different $I_i$'s. It remains to rule out the case that there appears a new hole constructed by faces both inside and outside of $\Lambda^n$ by the process (i), (ii) and (iii). From the assumption $d \geq 3$, the dual vertices in $\B(n) \setminus \pi_1 \cup \pi_2 \cup \pi_3$ are all connected, and the new faces do not contribute to make such holes.   \par
Finally, we show that
\begin{align*}
P_p(T_{0^*}) \geq P_p(M_n \geq 3) \operatorname{min} \{p,1-p\}^{| \Lambda^n|} > 0, 
\end{align*}
where we denote by $| \Lambda^n|$ the number of faces in $\Lambda^n$. This contradicts $P_p(T_{0^*}) = 0$, and completes the proof of Theorem \ref{unique}.
Let $\omega^n \in \Pi_{Q \subset \Lambda^n : \text{face}} \{0, 1\}$ be a configuration of faces in $\Lambda^n$. We denote by $T_{0^*}(\omega^n)$ the event that $0^*$ becomes a trifurcation when the configuration in $\Lambda^n$ are set to be $\omega^n$. For a configuration $\omega$, we also denote by $\left.\omega \right|_{\Lambda^n} \in \Pi_{Q \subset \Lambda^n : \text{face}} \{0, 1\}$ the restriction of $\omega$ to the faces in $\Lambda^n$.
Then we may write
\begin{align*}
P_p(T_{0^*}) = \sum_{\omega^n} P_p(T_{0^*}(\omega^n) \text{ and }  \left.\omega \right|_{\Lambda^n}= \omega^n), 
\end{align*}
where the right hand side is the sum of all configurations in $\Lambda^n$. Since the events $T_{0^*}(\omega^n)$ and $\{ \left.\omega \right|_{\Lambda^n}  = \omega^n\}$ are independent, the right hand side of this equation is bounded below by
\begin{align}
\label{ineq:T0_1}
\sum_{\omega^n } P_p(T_{0^*}(\omega^n)) P_p( \left.\omega \right|_{\Lambda^n}= \omega^n)
\geq \operatorname{min} \{p,1-p\}^{| \Lambda^n|} \sum_{\omega^n } P_p(T_{0^*}(\omega^n)).
\end{align}
From the above discussion, we have $\{M_n \geq 3\} \subset \bigcup_{\omega^n} T_{0^*}(\omega^n)$, and thus, the right hand side of (\ref{ineq:T0_1}) is again bounded below by
\begin{align*}
P_p(M_n \geq 3) \operatorname{min} \{p,1-p\}^{| \Lambda^n|} > 0.
\end{align*}

\begin{figure}[H]
\centering
\includegraphics[width = 15cm]{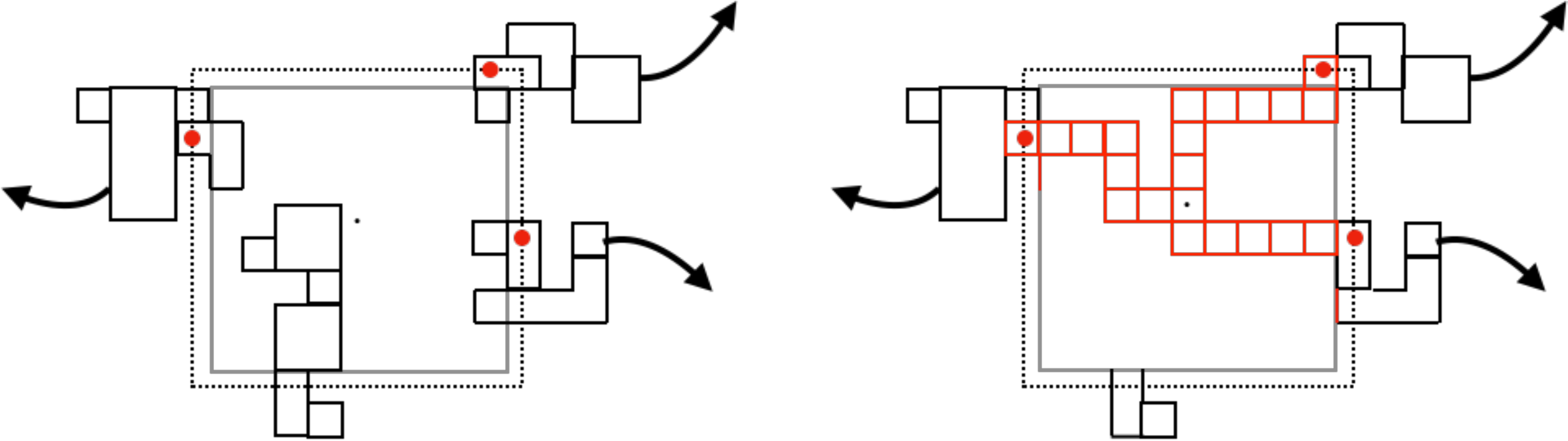}
\caption{The configuration of $M_n \geq 3$ (left) and $0^*$ is a trifurcation (right)}
\label{fig:irekae}
\end{figure}
\end{proof}

\begin{rem}
\rm
Grimmett, Holroyd and Kozma \cite{finite_cluster} study percolation of finite clusters in the bond percolation model; they focus on whether the random subset $X := \{x \in \Z^d \,:\, |C(x)| <  \infty \}$ of $\Z^d$ has an infinite connected component or not. By combining with uniqueness of the infinite connected component of $X$ \cite[Theorem 4.3]{finite_cluster}, Theorem \ref{unique} immediately follows from Lemma \ref{obs:taiou}, though we prove it directly in this paper.
\end{rem}

\section{Other properties of hole percolation}
\subsection{The right continuity of $\phole(p)$}
For bond percolation on $\mathbb{L}^d$, the percolation probability $\pbond(p)$ has some properties of continuity as follows. 
\begin{prop}
\rm
\label{right_conti_bond}
$\pbond(p)$ is right continuous on the interval $[0,1]$.
\end{prop}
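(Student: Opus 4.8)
The plan is to realize $\pbond$ as a decreasing limit of polynomials and then turn monotonicity into one-sided continuity. First I would introduce, for each $n\ge 1$, the finite-volume connection function
\[
g_n(p) := P_p\big(0 \bondconnect \partial B(n)\big),
\]
the probability that the origin is joined by an open path to the boundary of the box $B(n)$. Since the event $\{0 \bondconnect \partial B(n)\}$ is determined by the configuration on the finitely many bonds lying inside $B(n)$, the quantity $g_n(p)$ is a polynomial in $p$, hence continuous on $[0,1]$.

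Next I would record two monotonicity facts. First, any open path from $0$ reaching $\partial B(n+1)$ must pass through $\partial B(n)$, so $\{0 \bondconnect \partial B(n)\}$ is a decreasing sequence of events in $n$ whose intersection is exactly $\{|C(0)|=\infty\}$; consequently $g_n(p)\downarrow \pbond(p)$ as $n\to\infty$, and therefore $\pbond = \inf_n g_n$. Second, $\pbond$ is a non-decreasing function of $p$, which follows from the standard monotone coupling of the measures $\{P_p\}$ (cf.\ \cite{Grimmett}, and compare the analogous increasing-event argument in Remark \ref{increasing_event} for the hole model). The key step is then purely analytic: an infimum of continuous functions is upper semicontinuous, so $\pbond=\inf_n g_n$ is upper semicontinuous, i.e.\ $\limsup_{q\to p}\pbond(q)\le \pbond(p)$.

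Combining upper semicontinuity with monotonicity upgrades the estimate to right continuity. For $q\downarrow p$ one has
\[
\pbond(p) \;\le\; \lim_{q\downarrow p}\pbond(q) \;\le\; \limsup_{q\to p}\pbond(q) \;\le\; \pbond(p),
\]
where the first inequality uses that $\pbond$ is non-decreasing and the last uses upper semicontinuity; hence $\lim_{q\downarrow p}\pbond(q)=\pbond(p)$ and $\pbond$ is right continuous on $[0,1]$.

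The subtlety I would flag is that the semicontinuity argument alone yields only a one-sided statement: the identification $\inf_n g_n = \pbond$ via box exhaustion is what forces upper (rather than lower) semicontinuity, and it is precisely the monotonicity of $\pbond$ that converts this into right continuity. Left continuity is genuinely different in character and cannot be obtained by this soft argument, since it would fail in general at $\bond$; this is why the statement is asserted only as right continuity. An alternative packaging uses i.i.d.\ uniform labels $\{U(e)\}$, declaring $e$ open at level $p$ iff $U(e)\le p$, so that $C(0)$ grows with $p$ and the infinite-cluster events decrease as $q\downarrow p$; but verifying that their intersection is the level-$p$ event again requires the same box-exhaustion idea, so the upper-semicontinuity formulation is the cleaner route.
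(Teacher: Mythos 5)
Your proof is correct and takes essentially the same approach as the paper: the paper (Remark \ref{bond_inbox}) notes that $P_p(0 \bondconnect \partial B(n))$ is a polynomial in $p$ because the event is determined by the finitely many bonds in $B(n)$, and then applies the argument of Proposition \ref{right_conti}, namely that a non-increasing limit of continuous functions is upper semicontinuous, which together with monotonicity of $\pbond$ in $p$ gives right continuity. Your identification of $\bigcap_n \{0 \bondconnect \partial B(n)\} = \{|C(0)| = \infty\}$ and the chain of inequalities closing the argument match the paper's reasoning exactly.
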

\begin{prop}
\rm
\label{left_conti_bond}
$\pbond(p)$ is left continuous on the interval $(\bond,1]$.
\end{prop}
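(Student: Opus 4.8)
The plan is to combine the standard monotone coupling with the uniqueness of the infinite cluster (Theorem \ref{th:uniqueness}). Since right continuity is already available from Proposition \ref{right_conti_bond}, and continuity is the conjunction of left and right continuity, it suffices to prove that $\pbond$ is left continuous at each $p \in (\bond, 1]$. I would realize all the measures $P_\pi$ simultaneously on a single probability space by taking i.i.d.\ uniform variables $(U_e)_{e \in \mathbb{E}^d}$ on $[0,1]$ and declaring $e$ to be $\pi$-open when $U_e < \pi$; write $C_\pi(x)$ for the cluster of $x$ in the resulting configuration. The configurations are then nondecreasing in $\pi$, the law at level $\pi$ is $P_\pi$, and $\pbond(\pi) = \mathbb{P}(|C_\pi(0)| = \infty)$, with the events $\{|C_\pi(0)| = \infty\}$ increasing as $\pi \uparrow p$.

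Because these events increase with $\pi$, left continuity at $p$ is equivalent to $\mathbb{P}(D) = 0$, where
\[
D := \{|C_p(0)| = \infty\} \cap \bigcap_{\pi < p} \{|C_\pi(0)| < \infty\}
\]
is the event that the origin lies in an infinite cluster at level $p$ but in a finite cluster at every strictly smaller level. Indeed, in the coupling $\pbond(p) - \pbond(\pi)$ equals $\mathbb{P}(|C_p(0)| = \infty, |C_\pi(0)| < \infty)$, and these probabilities decrease to $\mathbb{P}(D)$ as $\pi \uparrow p$ by continuity of the measure from above.

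The heart of the argument is to show $\mathbb{P}(D) = 0$ by a bridging construction, and this is the step where $p > \bond$ is essential. Fix any $\pi$ with $\bond < \pi < p$. Almost surely the level-$\pi$ infinite cluster $I_\pi$ is nonempty (as $\pi > \bond$ forces $\pbond(\pi) > 0$), and the level-$p$ infinite cluster is unique by Theorem \ref{th:uniqueness}; since every $\pi$-open edge is $p$-open, $I_\pi$ is contained in that unique infinite $p$-cluster $I_p$. On $D$ the origin also lies in $I_p$, so it is joined to some $y \in I_\pi$ by a finite $p$-open path. Each edge $e$ of this path satisfies $U_e < p$, so choosing $\pi'$ with $\max\{\pi, \max_e U_e\} < \pi' < p$ makes the whole path $\pi'$-open; as $\pi' > \pi$ we have $y \in I_\pi \subseteq I_{\pi'}$, whence $|C_{\pi'}(0)| = \infty$ with $\pi' < p$. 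This contradicts the defining property of $D$, so $\mathbb{P}(D) = 0$ and left continuity follows.

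The main obstacle is precisely ruling out the scenario encoded by $D$, namely that the origin's cluster becomes infinite only in the limit $\pi \uparrow p$ while staying finite below $p$; the bridging step disposes of it, but only by invoking the nonemptiness of $I_\pi$ for some $\pi \in (\bond, p)$ together with uniqueness at level $p$. This is exactly why the statement is confined to $(\bond, 1]$: if $p \le \bond$ there may be no level $\pi < p$ supporting an infinite cluster, and the construction collapses. I expect no difficulty in the measure-theoretic bookkeeping, only in stating the almost-sure facts (nonemptiness of $I_\pi$, uniqueness of $I_p$, and the inclusion $I_\pi \subseteq I_p$) carefully enough that they hold simultaneously on a single full-measure event.
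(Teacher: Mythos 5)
Your proposal is correct and takes essentially the approach the paper itself relies on: the paper states Proposition \ref{left_conti_bond} without proof, but its proof of the hole analogue (Proposition \ref{left_conti}) uses precisely your strategy---a monotone coupling via i.i.d.\ uniform variables, combined with the uniqueness theorem to show that on the bad event the origin's level-$p$ infinite cluster must connect by a finite open path to an infinite cluster at some strictly smaller level $\pi' < p$, a contradiction. Your handling of the reduction to $\mathbb{P}(D)=0$ and of the role of $p > \bond$ is accurate, so there is nothing to add.
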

We prove the analogue of the continuity for the hole percolation model, by using the similar technique used in the case of bond percolation.
\begin{prop}
\rm
\label{right_conti}
$\phole(p)$ is right continuous on $[0,1] \setminus \{1-\bond\}$.
\end{prop}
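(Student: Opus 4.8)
The plan is to mimic the proof of Proposition \ref{right_conti_bond} (right continuity of $\pbond$): represent $\phole$ as a decreasing limit of continuous functions, deduce upper semicontinuity, and combine with the monotonicity of $\phole$ established in Remark \ref{increasing_event} to obtain right continuity. The only genuinely new difficulty compared with the bond case is that the natural approximating functions are no longer finite-range, because whether $0^*$ lies in a hole is a global property of the configuration; this is exactly where Lemma \ref{bighole} and the exclusion of $1-\bond$ enter.

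First I would dispose of the easy range. For $p>1-\bond$ we have $1-p<\bond$, so $\pbond(1-p)=0$ and there is no infinite dual cluster almost surely; as in the proof of Theorem \ref{upper}, the hole graph is then a single infinite cluster containing every dual vertex, so $\phole\equiv 1$ on $(1-\bond,1]$, which is trivially right continuous. This also explains the excision of $1-\bond$: from the right $\phole(p)\to 1$, whereas $\phole(1-\bond)=1$ holds only when $\pbond(\bond)=0$. Indeed, if $\pbond(\bond)>0$ an infinite dual cluster is present at $p=1-\bond$, a positive fraction of dual vertices then lie in that infinite cluster and hence in no hole, forcing $\phole(1-\bond)<1$ and a genuine jump. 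Since the value of $\pbond(\bond)$ is not known in general, the point $1-\bond$ must be excluded.

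The substance is right continuity at a fixed $p_0<1-\bond$. Set $g_n(p):=P_p(0^*\connect\B(n))$. As in the remark following Theorem \ref{theo:connection_in_supercritical}, the events $\{0^*\connect\B(n)\}$ decrease in $n$ with intersection $\{|G_{0^*}|=\infty\}$, so continuity of measure gives $\phole(p)=\inf_n g_n(p)$. If each $g_n$ is continuous near $p_0$, then $\phole=\inf_n g_n$ is upper semicontinuous there, and, being increasing, is right continuous at $p_0$ (for an increasing upper semicontinuous function the right-hand limit equals the value). Everything therefore reduces to continuity of the individual $g_n$.

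To prove continuity of $g_n$ I would approximate it by finite-range events and control the error with Lemma \ref{bighole}. Let $g_{n,m}(p)$ be the probability that the connection $0^*\connect\B(n)$ is witnessed by a hole path all of whose holes are enclosed by open faces inside $\Lambda^m$; this event depends only on the finitely many faces in $\Lambda^m$, so $g_{n,m}$ is a polynomial in $p$, hence continuous. Since every hole is bounded (it corresponds to a finite dual cluster by Proposition \ref{prop:hole_bijection}) and a finite hole path uses only finitely many faces, one has $g_{n,m}\uparrow g_n$ pointwise as $m\to\infty$. The error $g_n(p)-g_{n,m}(p)$ is controlled by the probability that the connection is forced through an anomalously large hole near the origin, a "large hole'' event of the type handled by Lemma \ref{bighole}; fixing $p_1\in(p_0,1-\bond)$ and using that the relevant enclosing events are increasing in $p$ (more open faces only make enclosure easier), this error is dominated uniformly over $p\in[p_0,p_1]$ by its value at $p_1$, which tends to $0$ as $m\to\infty$. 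Hence $g_{n,m}\to g_n$ uniformly on $[p_0,p_1]$, so $g_n$ is continuous there, completing the argument. I expect the main obstacle to be precisely this last step: identifying the correct finite-range witness for $\{0^*\connect\B(n)\}$ and dominating the tail $g_n-g_{n,m}$ by a single increasing event to which Lemma \ref{bighole}, together with monotonicity in $p$, applies uniformly. The delicate point is that a hole path can be obstructed not by being long but by passing through one large (possibly thin) hole, so the "large hole'' event must be phrased with care; this is the concrete manifestation of the paper's recurring warning that hole generation is not a local event.
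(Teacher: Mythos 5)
Your skeleton coincides with the paper's: you dispose of $(1-\bond,1]$ where $\phole\equiv 1$, explain the excision of $1-\bond$ exactly as in Remark \ref{rem:right_conti}, reduce right continuity on $(0,1-\bond)$ to continuity in $p$ of $g_n(p)=P_p(0^*\connect\partial\B(n))$ (a decreasing limit of continuous functions is upper semicontinuous, and an increasing upper semicontinuous function is right continuous), and you approximate $g_n$ by the finite-range polynomial events obtained by discarding all faces outside $\Lambda^m$. All of this matches Remark \ref{rem:right_conti}, the deduction of Proposition \ref{right_conti} from Lemma \ref{lem:conti}, and the first half of the paper's proof of Lemma \ref{lem:conti}. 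The divergence is in the one step that carries all the weight: the uniform-in-$p$ control of the tail $g_n-g_{n,m}$.

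That step, as you propose it, fails, and precisely for the reason you flag in your last sentence without resolving it. If the connection holds but cannot be witnessed inside $\Lambda^m$, some hole in every witnessing path has a dual cluster reaching $\partial\B(m)$; by Proposition \ref{prop:hole_bijection} this is a \emph{large finite dual cluster}, which may perfectly well be a long thin tube. Such a configuration does not enclose any box $\B(N)$ with $N$ large, so it is not contained in the enclosure event of Lemma \ref{bighole}, and that lemma (which is in any case purely qualitative: it gives no rate and no uniformity in $p$) says nothing about its probability. Nor can monotonicity rescue the argument: the natural dominating event, that some vertex of $\partial\B(n)$ lies in a finite dual cluster meeting $\partial\B(m)$, is \emph{not} increasing in $p$ --- opening more faces closes dual bonds, but it also turns infinite dual clusters into finite ones --- so you cannot bound the error on $[p_0,p_1]$ by its value at $p_1$. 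What is actually needed, and what the paper invokes at exactly this point, is Proposition \ref{exp_decay} (Grimmett's Theorem 8.21): in supercritical bond percolation the probability that a vertex lies in a \emph{finite} cluster reaching distance $r$ decays like $e^{-\gamma(p)r}$, with $\gamma$ bounded away from $0$ on compact subintervals of $(\bond,1)$. Applied to the dual percolation with $1-p\in[\alpha,\beta]\subset(\bond,1)$, this one estimate delivers both the smallness of the tail and its uniformity in $p$ simultaneously; no enclosure event and no monotonicity trick appear. Without this (or an equivalent quantitative supercritical input), your proof of the continuity of $g_n$ does not close, and this is a genuine gap rather than a presentational one.
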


\begin{rem}
\rm
\label{rem:right_conti}
Clearly $\phole(p)$ is right continuous at $p= 0$ since $\hole > 0$. In the interval $(1 - \bond,1]$, from the proof of Theorem \ref{upper}, we can see $\phole(p)=1$ and in particular $\phole(p)$ is right continuous. \par
For $p = 1- \bond$, we easily see
\begin{align*}
&\phole(p) \mbox{ is right continuous at } p=1-\bond \\ 
\Longleftrightarrow  \  & \phole(1-\bond) = 1.
\end{align*}
This is equivalent to
\begin{align}
\label{conjecture}
\pbond(\bond) = 0.
\end{align}
Indeed, similarly to the proof of  Proposition \ref{upper}, we can see that (\ref{conjecture}) implies $\phole(1-\bond) = 1$. If $\pbond(\bond)>0$, then we find
\begin{align*}
\phole(1-\bond) &\leq P_{\bond}(\mbox{there exists a hole including } 0^*) \\
                     & = 1- \pbond(\bond) < 1. 
\end{align*}
Yet $(\ref{conjecture})$ is proven only when $d=2$ and $d \geq 19$.
\end{rem}
In order to prove Theorem \ref{right_conti}, we use the following lemma.
\begin{lemm}
\rm
For each $n$, 
$P_p(0^* \connect \partial \B(n))$ is continuous at $p \in (0,1- \bond)$.
\label{lem:conti}
\end{lemm}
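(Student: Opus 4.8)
The plan is to realize $P_p(0^* \connect \partial\B(n))$ as a locally uniform limit of polynomials in $p$ and then invoke that a locally uniform limit of continuous functions is continuous. Write $A := \{0^* \connect \partial\B(n)\}$. Since adding an open face can only split a hole into two adjacent holes or create a new hole (Remark \ref{increasing_event}), a hole connection is never destroyed by opening faces; hence $A$ is an increasing event and $P_p(A)$ is non-decreasing in $p$.

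First I would introduce the inner approximations $A^{(N)} := \{0^* \text{ is connected to } \partial\B(n) \text{ in the finite hole graph } G^N(\omega)\}$ for $N > n$. Because $G^N(\omega)$ is built from $K(\omega) \cap \Lambda^N$, the event $A^{(N)}$ depends only on the finitely many faces inside $\Lambda^N$, so $P_p(A^{(N)})$ is a polynomial in $p$ and in particular continuous. Using again that passing from $G^N$ to $G^{N+1}$, and ultimately to $G = \bigcup_N G^N$, only adds open faces and therefore preserves connectivity, one checks $A^{(N)} \subseteq A^{(N+1)} \subseteq A$; conversely, any witnessing hole path for $A$ is a finite subgraph of $G$, hence already lies in some $G^N$, so $\bigcup_N A^{(N)} = A$. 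Continuity of measure then gives $P_p(A^{(N)}) \uparrow P_p(A)$ for every fixed $p$, which already yields lower semicontinuity and hence left-continuity of the monotone function $P_p(A)$.

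The substance of the lemma is to upgrade this pointwise convergence to local uniformity on $(0,1-\bond)$, which is where the hypothesis $p < 1 - \bond$ enters. To prove continuity at a fixed target $p_* \in (0,1-\bond)$, choose $p_0 \in (p_*, 1-\bond)$. For $p \leq p_0 < 1-\bond$ one has $1-p > \bond$, so the dual bond percolation of parameter $1-p$ is supercritical with a unique infinite cluster $I$, and by Lemma \ref{obs:taiou} the connection $A$ is equivalent to $0^*$ and $\partial\B(n)$ lying in one component of $(\mathbb{L}^d)^* - I$. I would show that on the gap event $A \setminus A^{(N)}$ the hole cluster of $0^*$ is forced to leave the window $\Lambda^N$: if every hole realizing the connection were enclosed by open faces inside $\Lambda^N$, the connection would already be recorded by $G^N$. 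Consequently $A \setminus A^{(N)}$ entails an anomalously large hole meeting $\B(n)$, an enclosing-type event controlled by Lemma \ref{bighole}. Since enclosing events are increasing in $p$, their probabilities are non-decreasing, so the value at $p_0$ dominates $P_p(A) - P_p(A^{(N)})$ for all $p \in [0,p_0]$ and tends to $0$ as $N \to \infty$ by Lemma \ref{bighole}. This gives uniform convergence $P_p(A^{(N)}) \to P_p(A)$ on the neighborhood $[0,p_0]$ of $p_*$, hence continuity at $p_*$.

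The main obstacle is precisely this geometric reduction in the third step: translating ``the connecting holes escape $\Lambda^N$'' into an event that Lemma \ref{bighole} governs. The delicate case is a thin, tube-like large hole, which may stretch from $\B(n)$ to $\partial\Lambda^N$ without enclosing any large box $\B(m)$; handling it cleanly may require supplementing Lemma \ref{bighole} with the exponential smallness of large finite dual clusters in the supercritical dual phase, or recasting the gap event directly as the existence of a finite dual cluster of large diameter meeting $\B(n)$ and summing over its translates via translation invariance. Once this uniform tail bound is established, continuity of $P_p(A)$ at each $p \in (0,1-\bond)$ follows at once.
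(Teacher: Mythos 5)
Your overall architecture --- window events $A^{(N)}$ that depend only on the faces in $\Lambda^N$, polynomiality, pointwise convergence by continuity of measure, then an upgrade to locally uniform convergence --- is exactly the paper's (the paper's $A_k$ is your $A^{(N)}$). But your primary mechanism for the uniformity step is genuinely broken, and for the reason you yourself suspect: Lemma \ref{bighole} controls only the probability that $K(\omega)$ \emph{surrounds} a large box, and a thin tube-like hole stretching from $\B(n)$ to $\partial\Lambda^N$ surrounds no box $\B(m)$ with $m$ large, so the gap event $A\setminus A^{(N)}$ is not dominated by any enclosing event to which Lemma \ref{bighole} applies. Moreover, the monotonicity device you use to get uniformity in $p$ (``enclosing events are increasing, so their probability at $p_0$ dominates'') is unavailable after the repair you propose: the corrected event ``some \emph{finite} dual cluster of large diameter meets $\B(n)$'' is not monotone in the face configuration (large diameter wants closed faces, finiteness wants open faces), so its probability on $[0,p_0]$ is not controlled by its value at $p_0$. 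The paper gets uniformity from a different source, namely Proposition \ref{exp_decay} (\cite[Theorem 8.21]{Grimmett}): the exponential decay rate $\gamma(1-p)$ of large finite dual clusters is bounded below uniformly for $1-p$ in a compact subinterval $[\alpha,\beta]\subset(\bond,1)$. With that substitution, your fallback route --- recast the gap event as a finite dual cluster of large diameter meeting $\B(n)$, then a union bound over $\partial\B(n)$ via translation invariance --- is precisely the paper's proof.

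The remaining gap, common to both of your routes and merely asserted in the fallback, is the localization ``meeting $\B(n)$''. From $A\setminus A^{(N)}$ one only gets directly that \emph{every} witnessing chain of holes contains a hole with a bounding face outside $\Lambda^N$; nothing yet forces that hole to be anywhere near $\B(n)$ --- a priori it could be a small hole located far away, in a chain that wanders out of the window, in which case no large finite dual cluster exists at all. The cure is an anchoring argument: by Lemma \ref{obs:taiou}, the event $0^*\connect\partial\B(n)$ yields a path in $(\mathbb{L}^d)^*-I$ from $0^*$ to $\partial\B(n)$; truncate it at its \emph{first} visit to $\partial\B(n)$, so that all its vertices lie in $\B(n)$, and consider the chain of holes containing these vertices. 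Each hole of this chain contains a dual vertex of $\B(n)$. If all of their bounding faces lay inside $\Lambda^N$, the whole chain (holes and shared boundary faces) would survive when the outside faces are closed, and $A^{(N)}$ would hold; hence some hole of this chain has a bounding face outside $\Lambda^N$, and its dual cluster is finite, contains a vertex of $\B(n)$, and reaches $\partial\B(N)$, so by connectedness it passes through $\partial\B(n)$. This is exactly the inclusion
\begin{align*}
A\setminus A^{(N)} \subset \bigcup_{x^*\in\partial\B(n)}\{\,x^* \overset{\rm dual}{\longleftrightarrow} \partial\B(N),\ |C(x^*)|<\infty\,\}
\end{align*}
that the paper feeds into Proposition \ref{exp_decay}, and it also shows why the wandering-chain scenario is harmless: such a chain must cross $\partial\B(n)$, and the crossing holes either survive inside $\Lambda^N$ (so $A^{(N)}$ holds after all) or force the required large finite cluster through $\partial\B(n)$. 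Without this step your fallback is an assertion of the needed estimate rather than a proof of it.
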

We can easily see Proposition \ref{right_conti} from the lemma.
\begin{proof}[Proof of Theorem \ref{right_conti}]
From Remark \ref{rem:right_conti}, it is sufficient to show that $\phole(p)$ is right continuous at $p \in (0, 1-\bond)$. \par
Take $p_0 \in (0, 1-\bond)$. Clearly, $\phole(p) = \lim_{n \to \infty} P_p(0^* \connect \partial \B(n))$. The function $P_p(0^* \connect \partial \B(n))$ is continuous at $p_0$ from Lemma \ref{lem:conti}, and non-increasing with $n$. Thus $\phole(p)$ is upper semi-continuous. Since $\phole(p)$ is non-decreasing with $p$, thus $\phole(p)$ is right continuous at $p_0$.
\end{proof}
\begin{rem}
\label{bond_inbox}
\rm
For the case of bond percolation, it is easy to see that $P_p(0 \bondconnect \partial B(n))$ is continuous. Indeed, since the event $\{ 0 \bondconnect \partial B(n)\}$ depends only on the configuration of bonds in $B(n)$, $P_p(0 \bondconnect \partial B(n))$ is polynomial with $p$. Therefore, Proposition \ref{right_conti_bond} can be shown as above.
\end{rem}
Let us turn to the proof of Lemma \ref{lem:conti}. Unlike the case in Remark \ref{bond_inbox}, the event $\{ 0^* \connect \partial \B(n) \}$ depends on outside of any fixed box because a hole can be constructed through the outside. Here, we use Proposition \ref{exp_decay} (\cite[Theorem 8.21]{Grimmett}), which states that finite bond clusters in the supercritical phase are less likely to be large. In the context of hole percolation, this proposition states that holes cannot be large. We use this fact and approximate the event $\{ 0^* \connect \partial \B(n) \}$ by some local events.
\begin{prop}
\label{exp_decay}
\rm
Let $(\Omega, \F, P_p)$ be the probability space for bond percolation model with probability $p$.  Let $G_n$ be an event defined by
\begin{align*}
G_n := \{ 0 \bondconnect H(n)\mbox{ and } |C(0)| < \infty \}, 
\end{align*}
where $H(n) = \{x \in \mathbb{Z}^d : x_1 = n\}$. Then, for $\bond < p < 1$, there exists $\gamma(p)>0$ such that
\begin{align*}
P_p(G_n) \leq e^{-\gamma(p)n}.
\end{align*}
Moreover, we can take $\gamma:(\bond, 1) \longrightarrow \mathbb{R}_{>0}$ satisfying the condition that
\begin{align*}
\inf_{p \in [\alpha, \beta]} \gamma(p) > 0
\end{align*}
for any $\bond < \alpha < p < \beta < 1$.
\end{prop}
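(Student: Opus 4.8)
The plan is to prove the exponential decay of $P_p(G_n)$ by a static renormalization (block) argument followed by a Peierls-type contour count, and then to extract the claimed uniformity in $p$ from the monotonicity of the block events. First I would record the elementary reduction: on the event $G_n$ the open cluster $C(0)$ is finite yet contains a vertex of $H(n)=\{x:x_1=n\}$, so its radius is at least $n$; it therefore suffices to bound the probability that $0$ lies in a \emph{finite} open cluster of radius at least $n$.

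The heart of the argument is a renormalization valid throughout the supercritical phase. Fix $p>\bond$ and an integer $L$, and tile $\Z^d$ by cubic blocks of side $L$. I would declare a block \emph{good} when a prescribed crossing-and-connection event occurs inside the block together with its immediate neighbours, chosen so that the open cluster realizing a good block is connected, within the union of the block and its neighbours, to the crossing clusters of all adjacent good blocks. The Grimmett--Marstrand slab theory (see \cite{Grimmett}) guarantees that one may choose $L=L(p)$ so large that each block is good with probability as close to $1$ as desired; since the good-block indicators have bounded range of dependence, stochastic domination then shows that the \emph{bad} blocks are dominated from above by independent Bernoulli site percolation of some density $\delta$ on the renormalized lattice, with $\delta$ arbitrarily small. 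By the uniqueness of the infinite cluster (Theorem \ref{th:uniqueness}), the crossing clusters of any two adjacent good blocks lie in one and the same infinite cluster, so every good block belongs to the unique infinite cluster.

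Next I would run a Peierls argument on the renormalized lattice. If $0$ lies in a finite cluster of radius at least $n$, then $C(0)$ cannot meet the infinite cluster, so in the block picture $C(0)$ is screened from infinity by a $\ast$-connected \emph{separating surface} of bad blocks enclosing a region of diameter at least $n$. Any such surface contains at least $cn/L$ blocks, and the number of $\ast$-connected separating surfaces consisting of $m$ blocks and anchored near the origin is at most $\mu^m$ for a combinatorial constant $\mu=\mu(d)$. Summing the domination estimate over admissible surfaces gives
\begin{align*}
P_p(G_n) \le \sum_{m \ge cn/L} \mu^m \delta^m,
\end{align*}
which is at most $e^{-\gamma(p)n}$ once $\delta$ is chosen with $\mu\delta<1$ (that is, once $L$ is taken large enough), yielding the stated bound with $\gamma(p)>0$.

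Finally, for the uniformity I would use that the block events are increasing, so their probabilities are nondecreasing in $p$: fixing $\alpha>\bond$, the value $L=L(\alpha)$ and the resulting density bound serve simultaneously for every $p\ge\alpha$, while $\mu$ depends only on $d$. Hence $\gamma$ may be chosen bounded below on each compact interval $[\alpha,\beta]\subset(\bond,1)$. The hard part will be the renormalization step: one must design the good-block event so that connectivity across adjacent good blocks is genuinely \emph{guaranteed} (not merely likely) and so that the good process dominates a high-density independent field, which is exactly where the full strength of the supercritical slab theory is needed; the contour count and the uniformity are comparatively routine. I note that in dimension $d=2$ the renormalization can be bypassed entirely, since a finite cluster of radius at least $n$ is encircled by a closed dual circuit of length at least $n$, whose probability decays exponentially by the subcritical estimate in the dual (where $1-p<\tfrac12$).
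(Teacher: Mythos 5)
Your proposal cannot be checked against an internal argument, because the paper does not prove this proposition at all: it is imported verbatim as \cite[Theorem 8.21]{Grimmett}, and the surrounding text only explains how it will be used (in the proof of Lemma \ref{lem:conti}). What you have written is a reconstruction of the standard proof underlying that citation: reduce $G_n$ to the event that $0$ lies in a finite cluster of radius at least $n$, run a static renormalization in the style of Grimmett--Marstrand so that bad blocks are stochastically dominated by a low-density independent field (Liggett--Schonmann--Stacey), and finish with a Peierls count over $\ast$-connected separating surfaces of bad blocks. That is indeed the route of the supercritical-phase chapter of \cite{Grimmett}, and your point that the block events are increasing in $p$, so that a single $L = L(\alpha)$ and a single density bound $\delta$ serve for all $p \geq \alpha$, is exactly the mechanism that yields the uniformity $\inf_{p \in [\alpha,\beta]} \gamma(p) > 0$ --- the one clause the paper needs beyond the bare textbook statement. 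The $d=2$ shortcut via a closed dual circuit is also correct, since $1-p < 1/2 = \bond(2)$ puts the dual in the regime of exponential decay.

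One step of your sketch is genuinely wrong as stated, though reparable by the standard device. You assert that ``by the uniqueness of the infinite cluster, the crossing clusters of any two adjacent good blocks lie in one and the same infinite cluster,'' concluding that \emph{every} good block belongs to the unique infinite cluster. Adjacent good blocks have merged crossing clusters by the very design of the good-block event, not by Theorem \ref{th:uniqueness}; and a good block lying in a \emph{finite} component of good blocks need not meet the infinite cluster at all, so the conclusion fails. Moreover, to get the screening you want, the good-block event must contain the stronger local clause that every open path of diameter of order $L$ inside the block is joined, within the block's neighbourhood, to the crossing cluster: with that clause, if $C(0)$ has radius at least $n$ and touches a good block, it attaches to the cluster spanning the entire good component of that block, so finiteness of $C(0)$ forces that good component --- and with it $C(0)$ --- to be enclosed by a $\ast$-connected surface of bad blocks; it is this surface your contour sum must enumerate (with an additional polynomial anchoring factor, counting where the surface crosses a fixed half-axis, which is harmless for the exponential bound). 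Without the stronger clause, nothing prevents $C(0)$ from threading through good blocks while remaining disjoint from their crossing clusters, and the Peierls dichotomy collapses. Since the paper itself delegates all of this to \cite{Grimmett}, your flagged ``hard part'' is exactly where the delegated work lives.
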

From this proposition, we will prove Lemma \ref{lem:conti}.
\begin{proof}[Proof of Lemma \ref{lem:conti}]
Fix arbitrary $p \in (0,1- \bond)$ and take $\bond < \alpha < 1- p < \beta <1$. We set the following events:
\begin{align*}
&A := \{0^* \connect \partial \B(n)\},	\\	
&A_k := \{ 0^* \connect \partial \B(n) \mbox{ even if all faces in } {\mathbb{R}}^d \setminus \Lambda^k \mbox{ are set to be closed}  \}.
\end{align*}
Then $P_p(A_k)$ is a polynomial with respect to $p$ since $A_k$ depends only on the configuration of faces in $\Lambda^k$, and thus continuous. Moreover, from 
\begin{align*}
A_1 \subset A_2 \subset A_3 \cdots \subset \bigcup_k A_k = A
\end{align*}
and the continuity of measures, we have $P_p(A_k) \longrightarrow P_p(A)$ as $k \longrightarrow \infty$.\par
We show that this convergence is uniformly on $1-p \in [\alpha, \beta]$, which leads to the continuity of $P_p(A)$ at $p$. Clearly, we can see
\begin{align*}
P_p(A) - P_p(A_k) &= P_p(A \setminus A_k).
\end{align*}
If the event $A \setminus A_k$ occurs, then in the dual lattice, there exists some $x^* \in \partial \B(n)$ which belongs to a finite dual cluster intersecting $\partial \B(k)$. Thus, we obtain
\begin{align}
\label{dual_finite}
P_p(A \setminus A_k) \leq \sum_{x^* \in \partial \B(n)} P_p( x^* \overset{\rm dual}{\longleftrightarrow} \partial \B(k), |C(x^*)| < \infty).
\end{align}
Here, $x^* \overset{\rm dual}{\longleftrightarrow} \partial \B(k)$ implies that there exists an open dual path connecting between $x^*$ and some dual vertex $y^*$ with $\|x^* - y^*\|_{\infty} \geq k-n$. Thus, the right hand side of (\ref{dual_finite}) is bounded above by
\begin{align}
\label{meidaihyouka}
|\partial \B(n)| P_p( 0^* \overset{\rm dual}{\longleftrightarrow} \partial \B(k - n), |C(0^*)| < \infty)
\end{align}
Let $F_1, \ldots , F_{2d}$ be the list of $2d$ surfaces of $\B(k-n)$. Then (\ref{meidaihyouka}) is again bounded above by
\begin{align*}
          &|\partial \B(n)| P_p( \bigcup_{i=1}^{2d} \{0^* \overset{\rm dual}{\longleftrightarrow} F_i \} \cap \{|C(0^*)| < \infty \}) \\
\leq \, &2d|\partial \B(n)| P_p(0^* \overset{\rm dual}{\longleftrightarrow} F_1, |C(0^*)| < \infty ) \\
\leq \, & 2d|\partial \B(n)|  e^{-\sigma (k-n)}, 
\end{align*}
where $\sigma := \inf_{1-p \in [\alpha, \beta]} \gamma(1-p)> 0$. This completes the proof of uniform convergence $P_p(A) \longrightarrow P_p(A_k)$ for $1-p \in [\alpha, \beta]$.
\end{proof}

\subsection{Left continuity of $\phole(p)$ in the supercritical phase}
For the proof of Proposition $\ref{left_conti_bond}$, the uniqueness of infinite clusters (Theorem $\ref{th:uniqueness}$) plays an important role. In this paper, we obtain the analogue theorem (Theorem \ref{unique}) in the previous section．We will prove the following Proposition by using the similar idea of the proof of Proposition $\ref{left_conti_bond}$. 
\begin{prop}
\rm
\label{left_conti}
$\phole(p)$ is left continuous on the interval $(\hole,1]$. 
\end{prop}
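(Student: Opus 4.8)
The plan is to transport the classical proof of left continuity of $\pbond$ (Proposition \ref{left_conti_bond}) to the hole setting, with the uniqueness of the infinite bond cluster replaced by Theorem \ref{unique} and the exponential decay of finite clusters replaced by Proposition \ref{exp_decay} (equivalently Lemma \ref{bighole}). First I would set up the standard coupling: take i.i.d.\ random variables $\{X_Q\}_{Q \in \K^d_{d-1}}$, uniform on $[0,1]$, and declare the face $Q$ to be $p$-open iff $X_Q \le p$, so that every face percolation model lives on one probability space and, by Remark \ref{increasing_event}, the event $\{|G_{0^*}| = \infty\}$ is nondecreasing along the coupling. Writing $\phole(p)=P(|G_{0^*}(p)|=\infty)$, and recalling that $\phole$ is nondecreasing, it suffices to fix $p_0\in(\hole,1]$ and show $\phole(p_0)-\phole(p_1)\to 0$ as $p_1\uparrow p_0$. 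In the coupling,
\[
\phole(p_0)-\phole(p_1) = P\big(|G_{0^*}(p_0)| = \infty, \ |G_{0^*}(p_1)| < \infty\big),
\]
so the task is to bound the probability that $0^*$ lies in a finite hole cluster at level $p_1$ but in an infinite one at level $p_0$.

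Before the main estimate I would dispose of the easy ranges. For $p_0\in(1-\bond,1]$ the proof of Theorem \ref{upper} gives $\phole\equiv 1$ on a left neighborhood of $p_0$, so left continuity is immediate, and the endpoint $p_0=1-\bond$ (when it lies in $(\hole,1]$) can be handled separately through the continuity already established in Lemma \ref{lem:conti}. Thus the substantive case is $p_0\in(\hole,1-\bond)$. Here the dual bond percolation at density $1-p_0>\bond$ is supercritical, so finite dual clusters --- equivalently holes, by Proposition \ref{prop:hole_bijection} --- obey the exponential decay of Proposition \ref{exp_decay} and cannot grow large (Lemma \ref{bighole}); moreover $p_1>\hole$ for $p_1$ close to $p_0$, so by Theorem \ref{unique} the infinite hole cluster at each of the levels $p_1,p_0$ is unique and unambiguous.

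The main estimate then mirrors the bond case. On the event above, raising the level from $p_1$ to $p_0$ opens only the faces $Q$ with $X_Q\in(p_1,p_0]$, and by Remark \ref{increasing_event} these openings can only merge and enlarge hole clusters; hence there is a ``pivotal'' face $Q$ with $X_Q\in(p_1,p_0]$ that is a common boundary face joining the (still finite) hole cluster of $0^*$ to the unique infinite hole cluster. Summing over the possible locations of this face and using independence, each candidate contributes a factor $P(X_Q\in(p_1,p_0])=p_0-p_1$, giving
\[
\phole(p_0)-\phole(p_1) \le (p_0-p_1)\, E_{p_1}\big[\, \#\{\text{boundary faces of the hole cluster of } 0^*\} \ ; \ |G_{0^*}(p_1)| < \infty \,\big].
\]
By the correspondence of Lemma \ref{obs:taiou} together with the exponential decay of finite dual clusters at density $1-p_1>\bond$, this expectation is finite and stays bounded for $p_1$ in a left neighborhood of $p_0$; letting $p_1\uparrow p_0$ then drives the right-hand side to $0$, which is the asserted left continuity.

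The hard part will be the pivotal-face step. Unlike a bond cluster, a hole is a nonlocal object: whether $0^*$ belongs to an infinite hole cluster cannot be read off from any bounded window, so one cannot simply localize a ``last open edge'' as in the classical argument, and a naive threshold argument fails precisely because $0^*$ could in principle enter the infinite hole cluster in the limit $p\uparrow p_0$ with no single face completing the connection. Ruling out this accumulation scenario is exactly where supercriticality of the dual is essential: the exponential decay of finite dual clusters (Proposition \ref{exp_decay}, Lemma \ref{bighole}) forces holes to be uniformly small, so the enlargement of the hole cluster of $0^*$ is governed by its finitely many boundary faces, the limiting accumulation has probability zero, and the summable bound above is restored.
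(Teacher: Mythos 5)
Your coupling setup is the same as the paper's, but the central step of your argument --- the pivotal-face inequality --- fails for a structural reason, and it is precisely the reason the paper takes a different route. The inequality is modeled on bond percolation, where a cluster can only grow by opening an edge on its own boundary, so that a ``last pivotal edge'' can be located on $\Delta C_{p_1}(0)$. Hole clusters do not grow this way. By Remark \ref{increasing_event}, opening a face splits a hole or creates a new one, and by Lemma \ref{obs:taiou} the event $\{|G_{0^*}(p)|=\infty\}$ is equivalent to the component of $0^*$ in $(\mathbb{L}^d)^* - I^*$ being infinite, where $I^*$ is the infinite dual cluster at level $p$. Passing from $p_1$ to $p_0$, the cluster of $0^*$ joins the infinite hole cluster because a portion of the infinite dual cluster enclosing $G_{0^*}(p_1)$ gets severed from infinity when the corresponding dual bonds close; the faces responsible for that severing can lie arbitrarily far from $G_{0^*}(p_1)$. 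Concretely: suppose the hole cluster of $0^*$ at level $p_1$ is the single hole $\{0^*\}$, enclosed by a large blob of the infinite dual cluster which is attached to infinity by a single dual bond at distance $100$; if the face crossing that one bond opens between the levels, the blob becomes a hole and $G_{0^*}$ merges into the infinite cluster, yet no face adjacent to the boundary of $G_{0^*}(p_1)$ has changed state. Hence the claimed bound $\phole(p_0)-\phole(p_1)\le (p_0-p_1)\,E_{p_1}\bigl[\#\{\text{boundary faces of } G_{0^*}(p_1)\};\,|G_{0^*}(p_1)|<\infty\bigr]$ does not follow. Your final paragraph correctly names this nonlocality as the obstacle, but the proposed cure does not repair it: exponential decay of single finite dual clusters (Proposition \ref{exp_decay}) makes individual holes small, it neither localizes the severing mechanism nor bounds the number of holes in a finite hole cluster, so even the finiteness of the expectation you invoke (an analogue of the finite-cluster susceptibility, here for hole clusters) is not available anywhere in the paper.

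There are two further gaps: (i) the endpoint $p_0=1-\bond$ cannot be dispatched by Lemma \ref{lem:conti}, which gives continuity of $P_p(0^*\connect\partial\B(n))$ only on the open interval $(0,1-\bond)$; moreover at that point the dual density equals $\bond$, so no exponential decay is available, and since $\hole<1-\bond$ is known at least in high dimensions, this point genuinely lies in $(\hole,1]$ and your case analysis leaves it uncovered; (ii) at level $p_1$ the vertex $0^*$ may lie in no hole at all, so ``the boundary faces of the hole cluster of $0^*$'' may be the empty set while the event still occurs. The paper's proof avoids all of this by running the van den Berg--Keane argument, with uniqueness replacing pivotality: on $\{|H_p(0^*)|=\infty\}$, pick $\alpha\in(\hole,p)$; almost surely there is an infinite component $H_\alpha(x^*)$ at level $\alpha$, and by Theorem \ref{unique} it must lie inside $H_p(0^*)$; a finite dual path inside $H_p(0^*)$ joins $0^*$ to $x^*$, each of its vertices lies in a hole bounded by finitely many faces, all with $X_Q<p$, so any $\pi<p$ exceeding the maximum of these finitely many values and $\alpha$ keeps the whole path in $H_\pi$ and gives $|H_\pi(0^*)|=\infty$. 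This yields $\lim_{\pi\nearrow p}\phole(\pi)=\phole(p)$ using only Theorem \ref{unique} and the finiteness of holes, with no pivotal faces and no moment bounds; you should redirect your argument along these lines.
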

Note that from Lemma \ref{obs:taiou} and Theorem \ref{unique}, the number of infinite connected components in $(\mathbb{L}^d)^* - I$ is also at most 1. Remark that we focus on the infinite cluster $I$, and  ignore the configuration of dual bonds in $(\mathbb{L}^d)^* - I$. \par
In Subsection \ref{subsec:Face_percolation}, we adopt the probability space $(\Omega, \mathcal{F}, P_p)$ of the face percolation model for a fixed probability $p$. For the proof of Proposition \ref{left_conti}, however, we need to compare the configurations in different probability. Now we use the technique of ``coupling" of random variables in order to consider configurations in different probability on the same probability space. \par
\begin{proof} 
We set the collection of independent random variables $(X_Q: Q \in \mathcal K_{d-1}^d )$ on a probability space $(\tilde{\Omega}, \tilde{\mathcal{F}}, \tilde{P})$ indexed by $Q \in \mathcal K_{d-1}^d$, each having the uniform distribution on $[0, 1]$. For $p \in [0,1]$, we define $\eta_p \in \Omega=\{0,1\}^{\mathcal K^d_{d-1}} $ as 
\begin{align*}
  \eta_p(Q)= \begin{cases}
     1, & \text{if }X_Q < p, \\
     0, & \text{otherwise},
   \end{cases}
\end{align*}
which gives the face percolation model with probability $p$. Let $I^*(\eta_p) \subset (\mathbb{L}^d)^*$ be the infinite cluster in $ (\mathbb{L}^d)^*$ induced by $\eta_p$ and we set $H_p:= (\mathbb{L}^d)^*- I^*(\eta_p)$. We denote by $H_p(x^*)$ the connected component in $H_p$ including $x^*$. From Lemma \ref{obs:taiou}, we obtain the following relation:
\begin{align*}
&|H_p(0^*)| = \infty  \Longleftrightarrow |G_{0^*}| = \infty, \\
&\exists x^*  \in (\Z^d)^* \ \text{such that} \  |H_p(x^*)| = \infty  \Longleftrightarrow G \mbox{ has an infinite cluster}.
\end{align*}
Remember that $H_p$ and $H_p(0^*)$ do not have anything to do with whether or not the dual bond in $(\mathbb{L}^d)^*- I^*(\eta_p)$ is open. Note also that, from the definition of $\eta_p$, $H_p$ and $H_p(x^*)$ is increasing with $p$. \par
Under this setting, we have
\begin{align*}
\phole(p) = P_p(|G_{0^*}| = \infty) = \tilde{P}(|H_p(0^*)| = \infty).
\end{align*}
Take arbitrary $p > \hole$. Proposition \ref{left_conti} is equivalent to the equality
\begin{align}
\label{eq:leftconti}
\lim_{\pi \nearrow p} \tilde{P}(|H_{\pi}(0^*)| = \infty) = \tilde{P}(|H_p(0^*)| = \infty).
\end{align}
From the continuity of measures, the left hand side of (\ref{eq:leftconti}) is equal to 
\begin{align*}
\tilde{P}(\exists \pi < p \ \text{such that} \ |H_{\pi}(0^*)| = \infty ).
\end{align*}
From the monotonicity of $H_p(0^*)$, we can easily see $\{ \exists \pi < p \ \text{such that} \ |H_{\pi}(0^*)| = \infty \} \subset \{|H_p(0^*)| = \infty \}$. Thus, it is sufficient to show
\begin{align}
\label{eq:leftconti_2}
\tilde{P}(\{|H_p(0^*)| = \infty \} \setminus \{ \exists \pi < p \ \text{such that} \ |H_{\pi}(0^*)| = \infty \}) = 0.
\end{align}
Suppose $|H_p(0^*)| = \infty$.  Let us take $\alpha$ with $\hole < \alpha < p$. From Theorem \ref{unique}, there exists a dual vertex $x^*$ such that $|H_{\alpha}(x^*)| = \infty$ almost surely. Theorem \ref{unique} also implies $H_{\alpha}(x^*) \subset H_p(0^*)$ almost surely. (If not, for this $p$, there exist two infinite connected components,  $H_p(0^*)$ and the one including $H_{\alpha}(x^*)$. This contradicts the uniqueness of the infinite hole cluster.)
Thus, we can take a dual path $0^*=x_0^*,x_1^*,x_2^*, \ldots, x_n^*=x^*$ in $H_p(0^*)$. For each $i = 0, \ldots ,n-1$, we can take the faces such that $\eta_p(Q) = 1$ which construct the hole including $x_i^*$, since $x_i^* \in H_p(0^*)$. We denote these faces by $Q_{(i,1)}, \cdots ,Q_{(i,m_i)}$. Let $\pi$ be
\begin{align*}
\pi := \max \{ \alpha, X_{Q_{(i,j)} }: i = 0, \ldots ,n-1,  \  j = 1, \ldots ,m_i \}.
\end{align*}
Then $\alpha \leq \pi < p$ since  $X_{Q_{i,j}} < p$. For this $\pi$, we have $H_{\alpha}(x^*) \subset H_{\pi}(x^*)$ and $H_{\pi}(x^*)$ includes $0^*$. Thus we have $|H_{\pi}(0^*)| = \infty$. This completes the proof of (\ref{eq:leftconti_2}).
\end{proof}

\subsection{The number of vertices in the hole graph}
In this subsection, we study the number of vertices in the hole graph. From Remark \ref{Betti}, the number of vertices in the hole graph $G^n$ restricted to $\Lambda^n$ is equal to the Betti number $\beta^n(X)$ of $X \cap \Lambda^n$ in dimension $d-1$. It is shown by the paper \cite{tsunoda} that $\beta^n(X)/|\B(n)|$ converges to a certain constant as $n \longrightarrow \infty$. Here, we give the explicit value of this limit by using the function $\kappa(p) := E_p(|C(0)|^{-1})$, a basic function in percolation theory.
\begin{prop}
\rm
Let $|G^n(\omega)|$ be the number of vertices in the restricted hole graph $G^n(\omega)$. Then,
\begin{align*}
\lim_{n \to \infty} \frac{|G^n(\omega)|}{|\B(n)|} = \kappa(1-p),
\end{align*}
almost surely.
\label{prop:vertices}
\end{prop}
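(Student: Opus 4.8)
The plan is to reduce the count of holes to a count of \emph{finite} clusters in the dual bond percolation at density $1-p$, and then to evaluate the resulting spatial density by the multidimensional ergodic theorem. By Remark \ref{Betti} we have $|G^n(\omega)| = \beta^n$, the number of bounded components of $\R^d \setminus (K(\omega) \cap \Lambda^n)$, and by Proposition \ref{prop:hole_bijection} the holes of the unrestricted model correspond bijectively to the finite dual clusters in $(\mathbb{L}^d)^*$, where dual bonds are open with probability $1-p$. First I would set up, on the full dual lattice, the two counts $F_n^0 := \#\{ \text{finite dual clusters } C^* : C^* \subseteq \B(n) \}$ and $F_n := \#\{ \text{finite dual clusters } C^* : C^* \cap \B(n) \neq \emptyset \}$, together with the elementary bound $F_n - F_n^0 \leq |\partial \B(n)| = O(n^{d-1})$: any finite cluster meeting both $\B(n)$ and its complement contains a vertex of the inner boundary $\partial \B(n)$, and distinct clusters use distinct such vertices.

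The first substantive step is to prove that
\[
|G^n(\omega)| = F_n^0 + O(n^{d-1}).
\]
For the lower bound, a finite dual cluster $C^* \subseteq \B(n-1)$ is surrounded by the open faces $\{ Q_{e^*} : e^* \in \Delta C^* \}$, which by Lemma \ref{bounded} coincide with $\partial B_{C^*}$ and lie inside $\Lambda^n$; hence each such cluster produces a distinct bounded component of $\R^d \setminus (K(\omega) \cap \Lambda^n)$, giving $|G^n(\omega)| \geq F_{n-1}^0 \geq F_n^0 - O(n^{d-1})$. For the upper bound, every bounded component contains a dual vertex $x^* \in \B(n)$, and inside $\Lambda^n$ the dual configuration agrees with the full lattice, so a hole whose enclosing faces stay within $\B(n-1)$ is exactly a finite full-lattice cluster counted by $F_n^0$; the holes for which the restriction to the window $\Lambda^n$ matters all meet the boundary annulus and number at most $O(n^{d-1})$. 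This bookkeeping is the main obstacle of the proof: passing from $K(\omega)$ to $K(\omega)\cap\Lambda^n$ alters the dual connectivity near $\partial \B(n)$ (removing exterior faces opens the corresponding dual bonds and merges the exterior into one unbounded region), so one must argue carefully that only an $O(n^{d-1})$ discrepancy is thereby created or destroyed.

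Next I would invoke the standard cluster-counting identity. Assigning to each dual vertex the weight $1/|C^*(x^*)|$ with the convention $1/\infty = 0$, a summation over a cluster gives total weight exactly $1$ to each finite cluster and $0$ to the infinite one, so
\[
F_n^0 \leq \sum_{x^* \in \B(n)} \frac{1}{|C^*(x^*)|} \leq F_n,
\]
whence $\bigl| \sum_{x^* \in \B(n)} |C^*(x^*)|^{-1} - F_n^0 \bigr| \leq F_n - F_n^0 = O(n^{d-1})$. Since $0 \leq |C^*(0^*)|^{-1} \leq 1$ is integrable and $(\Omega, \F, P_p, \tau_{x^*})$ is ergodic (Remark \ref{ergodic}), the multidimensional Birkhoff ergodic theorem applied along the van Hove boxes $\B(n)$ gives
\[
\frac{1}{|\B(n)|} \sum_{x^* \in \B(n)} \frac{1}{|C^*(x^*)|} \longrightarrow E_{1-p}\bigl[ |C^*(0^*)|^{-1} \bigr] = \kappa(1-p)
\]
almost surely, where $C^*(0^*)$ is the dual cluster of the origin under bond percolation at density $1-p$.

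Finally I would combine these estimates. Dividing $|G^n(\omega)| = F_n^0 + O(n^{d-1})$ by $|\B(n)| = (2n)^d$ and using $\frac{F_n^0}{|\B(n)|} = \frac{1}{|\B(n)|}\sum_{x^* \in \B(n)} |C^*(x^*)|^{-1} + O(n^{-1})$, the boundary corrections of order $O(n^{d-1})/|\B(n)| = O(n^{-1})$ vanish and we obtain $|G^n(\omega)|/|\B(n)| \to \kappa(1-p)$ almost surely. As emphasized above, the delicate point is not the ergodic averaging but the geometric identification in the second paragraph; the disjointness of clusters and the $O(n^{d-1})$ size of $\partial \B(n)$ are the essential tools that keep every boundary correction negligible after normalization.
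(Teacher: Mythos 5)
Your proposal is correct and takes essentially the same approach as the paper: the paper likewise identifies $|G^n(\omega)|$ with the number of finite dual clusters contained in $\B(n)$ (asserting this as an exact identity, so your $O(n^{d-1})$ slack in that step is harmless but not needed) and then applies the ergodic theorem to $\sum_{x^* \in \B(n)} |C^*(x^*)|^{-1}$, controlling the error by the number of clusters reaching $\partial \B(n+1)$, which is $O(n^{d-1})$. Your comparison of $F_n^0$ and $F_n$ is exactly the paper's comparison of its random variables $f_{x^*}$ and $g_{x^*}$.
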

\begin{proof}
Note that under the correspondence between dual bonds and faces, $|G^n|$ is equal to the number of finite clusters of the dual lattice whose vertices are all in $\B(n)$. For $x^* \in (\Z^d)^*$, let us define the two random variables $f_{x^*}$, $g_{x^*}$ as follows:
\begin{align*}
f_{x^*}(\omega) &= |C^*(x^*)|^{-1}, \\
g_{x^*}(\omega) &= \left \{
\begin{array}{cc}
|C^*(x^*)|^{-1}, & \mbox{if } C^*(x^*) \subset \B(n), \\
0, & \mbox{otherwise},
\end{array}
\right. 
\end{align*}
respectively. Clearly, we see
\begin{align*}
\sum_{x^* \in \B(n)} g_{x^*}(\omega) = |G^n(\omega)| 
\end{align*}
and from the ergodic theorem (\cite[Proposition 2.2]{ergodic}), we obtain almost surely
\begin{align*}
\lim_{n \to \infty} \frac{1}{|\B(n)|} \sum_{x^* \in \B(n)} f_{x^*}(\omega) = \kappa(1-p) 
\end{align*}
We show that the same equality holds even if we replace $f_{x^*}$ by $g_{x^*}$. Since
\begin{align*}
|f_{x^*}(\omega) - g_{x^*}(\omega)| &= \left \{
\begin{array}{cc}
|C^*(x^*)|^{-1}, & \mbox{if } x^*  \bondconnect  \partial \B(n+1), \\
0, & \mbox{otherwise}, 
\end{array}
\right. 
\end{align*}
we obtain
\begin{align*}
\left|\sum_{x^* \in \B(n)} f_{x^*}(\omega) - \sum_{x^* \in \B(n)} g_{x^*}(\omega) \right| 
&\leq \sum_{x^* \in \B(n)} | f_{x^*}(\omega) - g_{x^*}(\omega) | \\
&\leq \sum_{x^* \in \B(n): \, x^*  \bondconnect  \partial \B(n+1) } |C^*(x^*)|^{-1} \\
&\leq \sum_{x^*  \bondconnect  \partial \B(n+1) } |C^*(x^*)|^{-1}.
\end{align*}
The last summand is equal to the number of cluster intersecting $\partial \B(n+1)$, and thus bounded above by $|\partial \B(n+1)|$. Therefore, 
\begin{align*}
       \frac{1}{|\B(n)|} \left|\sum_{x^* \in \B(n)} f_{x^*}(\omega) - \sum_{x^* \in \B(n)} g_{x^*}(\omega) \right|
      & \leq \frac {|\partial \B(n+1)|}{|\B(n)|} \ \longrightarrow 0
\end{align*}
as $n \longrightarrow \infty$. This completes the proof of Proposition \ref{prop:vertices}.
\end{proof}

\subsection{The size of holes}
Though a hole graph is constructed from elementary cubes, its structure may be much more complicated than a subgraph of $(\mathbb{L}^d)^*$. Indeed, for example, the degree of the hole graph can be unbounded since the ``size" of holes are unbounded. In this subsection, as a first step in studying the structure of a hole graph, we give a proposition about the average size of holes. Here, we define the {\it size} $\sz(D)$ of a hole $D$ as the number of dual vertices in $D$, i.e., 
\begin{align*}
\sz(D) := \# \{x^* \in (\Z^d)^* : x^* \in D\}.
\end{align*}
\begin{prop}
\rm
\begin{align*}
\lim_{n \to \infty} \frac{1}{|G^n(\omega)|}  \sum_{D \in G^n(\omega)} \sz(D) = \frac{1 - \pbond(1-p)}{\kappa(1-p)},
\end{align*}
almost surely.
\label{prop:average}
\end{prop}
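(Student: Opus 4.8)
The plan is to reduce the statement to a spatial ergodic average over the dual lattice and then to control the finite clusters that straddle the boundary of $\B(n)$. By Proposition \ref{prop:hole_bijection}, each hole $D$ counted in $G^n(\omega)$ corresponds to a finite dual cluster $C^* \subset \B(n)$ with $\sz(D) = |C^*|$, so that $\sum_{D \in G^n(\omega)} \sz(D) = T_n$, where $T_n := \#\{x^* \in (\Z^d)^* : |C^*(x^*)| < \infty,\ C^*(x^*) \subset \B(n)\}$ counts the dual vertices lying in finite clusters entirely contained in $\B(n)$. Writing $S_n := \sum_{x^* \in \B(n)} I\{|C^*(x^*)| < \infty\}$, I first record the elementary bound $T_n \le S_n$. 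Since under $P_p$ the dual bonds form a bond percolation with parameter $1-p$ and the process is ergodic (Remark \ref{ergodic}), the multidimensional ergodic theorem (\cite[Proposition 2.2]{ergodic}) applied to the bounded function $\omega \mapsto I\{|C^*(0^*)| < \infty\}$ gives $S_n/|\B(n)| \to P_p(|C^*(0^*)| < \infty) = 1 - \pbond(1-p)$ almost surely; hence $\limsup_n T_n/|\B(n)| \le 1 - \pbond(1-p)$.

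For the matching lower bound I would avoid estimating the straddling clusters directly and instead use a truncated, genuinely local indicator. For $m \ge 1$ set $\phi_m(x^*) := I\{|C^*(x^*)| < \infty,\ \operatorname{diam} C^*(x^*) \le m\}$, which depends only on the dual bonds within distance $m$ of $x^*$. The ergodic theorem gives $|\B(n)|^{-1}\sum_{x^* \in \B(n)} \phi_m(x^*) \to a_m := P_p(|C^*(0^*)| < \infty,\ \operatorname{diam} C^*(0^*) \le m)$ almost surely, and $a_m \uparrow 1 - \pbond(1-p)$ as $m \to \infty$ by monotone convergence. The key geometric remark is that if $x^* \in \B(n-m)$ and $\operatorname{diam} C^*(x^*) \le m$, then $C^*(x^*) \subset \B(n)$, so every such $x^*$ is counted by $T_n$; consequently $T_n \ge \sum_{x^* \in \B(n-m)} \phi_m(x^*)$. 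Dividing by $|\B(n)|$, using $|\B(n-m)|/|\B(n)| \to 1$, and then letting $m \to \infty$ yields $\liminf_n T_n/|\B(n)| \ge 1 - \pbond(1-p)$. Combining the two bounds gives $T_n/|\B(n)| \to 1 - \pbond(1-p)$ almost surely.

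Finally, I would divide by the vertex count: Proposition \ref{prop:vertices} gives $|G^n(\omega)|/|\B(n)| \to \kappa(1-p)$ almost surely, and $\kappa(1-p) = E_{1-p}(|C(0)|^{-1}) > 0$ for $p \in (0,1)$, so
\[
\frac{1}{|G^n(\omega)|}\sum_{D \in G^n(\omega)}\sz(D) = \frac{T_n/|\B(n)|}{|G^n(\omega)|/|\B(n)|} \longrightarrow \frac{1 - \pbond(1-p)}{\kappa(1-p)}
\]
almost surely, which is the claim. The main obstacle is precisely the contribution $S_n - T_n$ of the finite clusters that meet both $\B(n)$ and its complement: unlike in Proposition \ref{prop:vertices}, where the analogous boundary term counts clusters and is bounded deterministically by $|\partial \B(n+1)|$, here we are summing cluster cardinalities, and no such deterministic bound is available. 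The squeeze via $\phi_m$ is designed to sidestep this difficulty—it shows $(S_n - T_n)/|\B(n)| \to 0$ almost surely without invoking exponential decay of finite clusters, so the argument remains valid even at $p = 1 - \bond$, where the expected finite dual cluster size may diverge.
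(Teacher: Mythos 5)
Your proposal is correct and takes essentially the same route as the paper's proof: both identify $\sum_{D \in G^n(\omega)} \sz(D)$ with the number of dual vertices whose finite dual clusters lie entirely in $\B(n)$, apply the spatial ergodic theorem to the indicator of $\{|C^*(0^*)| < \infty\}$, control the boundary-straddling clusters by a truncation argument (yours truncates by cluster diameter $m$, the paper's by cluster size $l$, each combined with the negligible shell $\B(n)\setminus\B(n-l)$ and a continuity-of-measure limit in the truncation parameter), and finally divide by the vertex count via Proposition \ref{prop:vertices}. Your two-sided squeeze is just a repackaging of the paper's one-sided estimate of the difference between the two indicator sums, and indeed neither argument requires exponential decay of finite clusters.
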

Note that the left hand side represents the average of size of holes in $G(\omega)$.
\begin{proof}
It suffices to prove that
\begin{align*}
\lim_{n \to \infty} \frac{1}{|\B(n)|}  \sum_{D \in G^n(\omega)} \sz(D) = 1 - \pbond(1-p),
\end{align*}
almost surely. Indeed, together with Proposition \ref{prop:vertices}, we obtain almost surely
\begin{align*}
\frac{1}{|G^n(\omega)|}  \sum_{D \in G^n(\omega)} \sz(D) 
&= \left(\frac{1}{|\B(n)|}  \sum_{D \in G^n(\omega)} \sz(D) \right){\left(\frac{1}{|\B(n)|} |G^n(\omega)| \right)}^{-1} \\
&\longrightarrow \frac{1 - \pbond(1-p)}{\kappa(1-p)}
\end{align*}
as $n \longrightarrow \infty$. For the indicator function $I_{\{C(x^*) \subset \B(n)\}}$, we easily see 
\begin{align*}
\sum_{x^* \in \B(n)} I_{\{C(x^*) \subset \B(n) \}} = \sum_{D \in G^n(\omega)} \sz(D).
\end{align*}
Moreover, from the ergodic theorem (\cite[Proposition 2.2]{ergodic}), we obtain almost surely
\begin{align*}
\lim_{n \to \infty} \frac{1}{|\B(n)|} \sum_{x^* \in \B(n)} I_{\{|C(x^*)| < \infty \}} = 1-\pbond(1-p).
\end{align*}
Thus, similar to Proposition \ref{prop:vertices}, let us show that the same equality holds even if we replace $I_{\{|C(x^*)| < \infty \}}$ by $I_{\{C(x^*) \subset \B(n)\}}$. It suffices to show 
\begin{align}
\label{eq:I-I=0}
\lim_{n \to \infty} \frac{1}{|\B(n)|} \left|\sum_{x^* \in \B(n)} I_{\{C(x^*) \subset \B(n)\}} - \sum_{x^* \in \B(n)} I_{\{|C(x^*)| < \infty \}} \right| = 0. 
\end{align}
Fix arbitrary $\epsilon >0$. Since
\begin{align*}
\lim_{l \to \infty} P_p(l \leq C(0^*) < \infty) = P_p(\bigcap_{l=1}^{\infty} \{l \leq C(0^*) < \infty\}) = 0,
\end{align*}
we may take sufficiently large $l \in \mathbb{N}$ such that
\begin{align*}
P_p(l \leq C(0^*) < \infty) < \epsilon.
\end{align*}
We fix this $l$. Clearly, we have
\begin{align*}
    &\frac{1}{|\B(n)|} \left|\sum_{x^* \in \B(n)} I_{\{C(x^*) \subset \B(n)\}} - \sum_{x^* \in \B(n)} I_{\{|C(x^*)| < \infty \}} \right| \\
 = &\frac{1}{|\B(n)|} \#\{x^* \in \B(n) : |C(x^*)| < \infty \mbox{ and }x^*  \bondconnect  \partial \B(n+1) \}.
\end{align*}
The right hand side can be decomposed as
\begin{align}
\label{eq:division}
&\frac{1}{|\B(n)|} \#\{x^* \in \B(n-l) : |C(x^*)| < \infty \mbox{ and }x^*  \bondconnect  \partial \B(n+1) \} \notag \\
  &+\frac{1}{|\B(n)|} \#\{x^* \in \B(n) \setminus \B(n-l) : |C(x^*)| < \infty \mbox{ and }x^*  \bondconnect  \partial \B(n+1) \}.
\end{align}
The first term of (\ref{eq:division}) is bounded above by
\begin{align*}
\frac{1}{|\B(n)|} \#\{x^* \in \B(n) : l \leq C(x^*) < \infty \}.
\end{align*}
From the ergodic theorem (\cite[Proposition 2.2]{ergodic}), it converges to $P_p(l \leq C(0^*) < \infty)$ as $n \longrightarrow \infty$ almost surely.  Thus, for sufficiently large $n$, the first term is bounded above by
\begin{align*}
P_p(l \leq C(0^*) < \infty) + \epsilon < 2\epsilon.
\end{align*}
The second term of (\ref{eq:division}) is bounded above by
\begin{align*}
\frac{1}{|\B(n)|} |\B(n) \setminus \B(n-l)| \longrightarrow 0
\end{align*}
as $n \longrightarrow \infty$. This completes the proof of (\ref{eq:I-I=0}).
\end{proof}

\section{Conclusions}
In this paper, we introduced the hole percolation model, and  gave the estimates for the critical probability of this model. Moreover, we proved the uniqueness of the infinite hole cluster and showed the estimate of the connectivity probability $P_p(x^* \connect y^*)$. 
Then, in view of the classical percolation models, the following problems will be important to obtain further understandings of the hole percolation.

\begin{itemize}
\item The estimate of the convergence velocity $P(x^* \connect y^*) \longrightarrow 0$ as $\|x^* - y^*\|_1 \longrightarrow \infty$ in the subcritical phase is yet to be given, while for the classical bond percolation model, the exponential decay (\ref{subcritical}) is shown. 
\item  It should be clarified whether the inequality $\hole(d) \leq 1- \bond(d)$ in Theorem \ref{theo:pc_estimate} is strict $\hole(d) < 1- \bond(d)$ or not. If it is strict, this implies that there exist both an infinite hole cluster and an infinite dual bond cluster almost surely for $\hole(d) < p < 1- \bond(d)$. This means the existence of an infinite hole cluster which does not cover the whole $\R^d$. By combining with Lemma \ref{obs:taiou} and \cite[Theorem 1.1]{finite_cluster}, we obtain this strictness for $d \geq 19$. Moreover, with the work of Fitzner-van der Hofstad \cite[Theorem 1.6]{greaterthan11}, it can be extended to $d \geq 11$. Yet it has not been shown for $3 \leq d \leq 10$. 

\end{itemize}

It should also be remarked that our hole percolation model, which is introduced as a higher dimensional percolation model, is limited to holes defined by homology generators in codimension one. From the viewpoint of theoretical generality, it is desirable to introduce other types of percolation models which can also deal with clusters of holes defined by homology generators in arbitrary codimension. However, one of the difficulties of the strategy introduced in this paper is that there is no canonical correspondence between the ``$k$-dimensional holes" of a random cubical set and the homology generators  in dimension $k$ except for $k=d-1$. 

For example, let us consider the bond percolation model in $\mathbb{L}^3$ and focus on the $1$-dimensional holes (i.e., loops) in the random graph. Then, it is easily observed that there is no natural bijective correspondence between loops and homology generators in dimension one. Figure \ref{fig:generators} shows the $1$-dimensional skeleton $X$ of the $3$-dimensional unit cube. Although $\text{rank} \, H_1(X) = 5$, 
there is no natural choice of 5 representative loops in $X$.
\begin{figure}[H] 
   \centering
   \includegraphics[width=3cm]{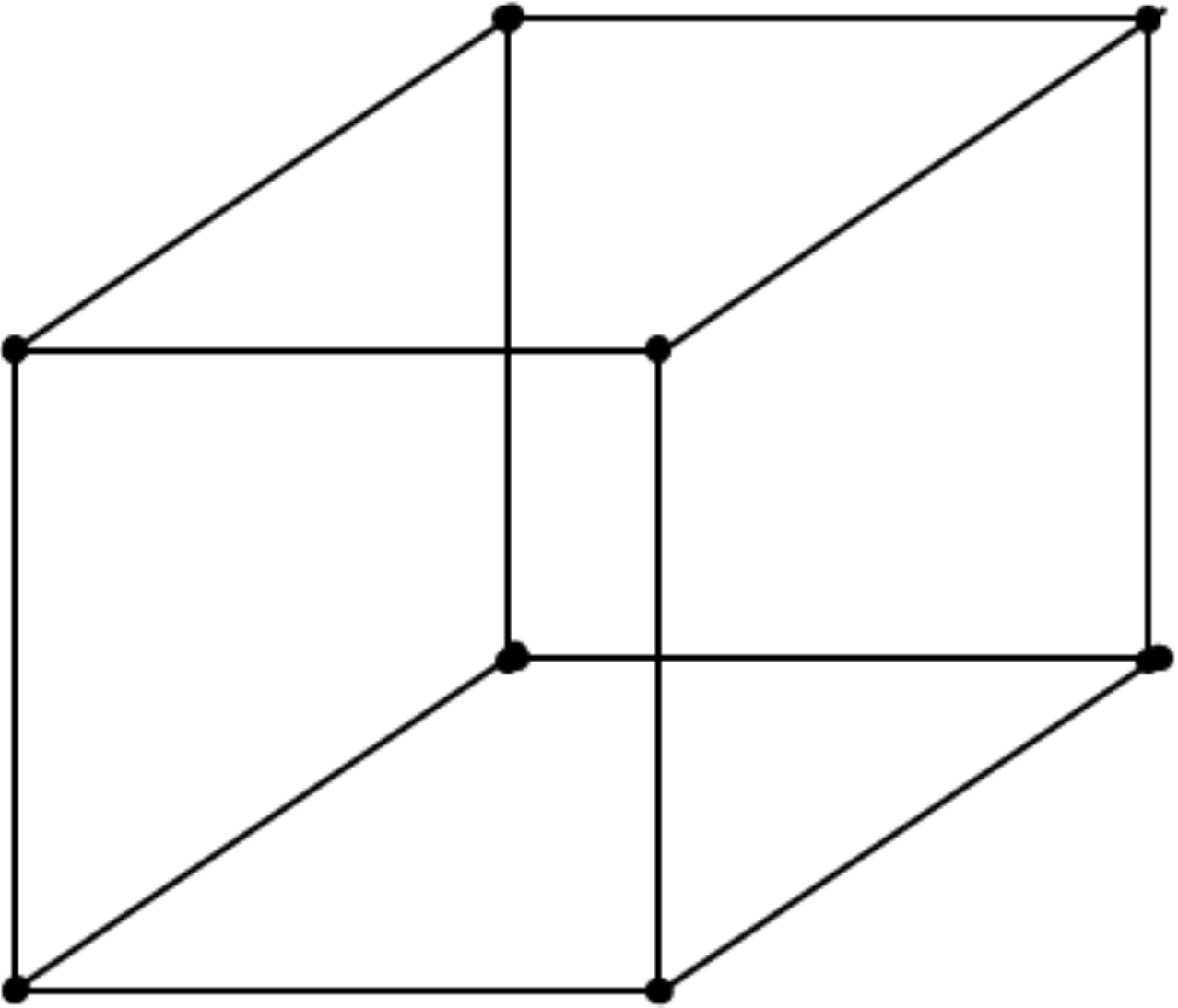} 
   \caption{The 1-skeleton $X$ of the unit cube. }
   \label{fig:generators}
\end{figure}

\paragraph{Acknowledgements}
The authors would like to thank Tomoyuki Shirai, Kenkichi Tsunoda and Masato Takei for their valuable suggestions and useful discussions. This work is partially supported by JST CREST Mathematics 15656429.

\end{document}